\documentclass[11pt,a4paper]{article}
\usepackage[T1]{fontenc}
\usepackage[utf8]{inputenc}
\usepackage{amsmath,amssymb,amsthm,mathtools,bm}
\usepackage{newtxtext,newtxmath}
\usepackage{microtype}
\usepackage{geometry}
\usepackage[authoryear,round]{natbib}
\usepackage{enumitem}
\usepackage[hidelinks]{hyperref}
\allowdisplaybreaks[3]
\setlist[enumerate]{itemsep=2pt,topsep=4pt}

\newtheorem{theorem}{Theorem}[section]
\newtheorem{lemma}[theorem]{Lemma}
\newtheorem{proposition}[theorem]{Proposition}
\newtheorem{corollary}[theorem]{Corollary}
\theoremstyle{definition}

\theoremstyle{remark}
\newtheorem{remark}[theorem]{Remark}

\newcommand{\R}{\mathbb{R}}
\newcommand{\E}{\mathbb{E}}
\newcommand{\Pbb}{\mathbb{P}}
\newcommand{\op}{\mathrm{op}}
\newcommand{\tr}{\mathrm{tr}}
\newcommand{\rank}{\mathrm{rank}}
\newcommand{\diag}{\mathrm{diag}}
\newcommand{\sgn}{\mathrm{sign}}
\newcommand{\Span}{\mathrm{span}}
\newcommand{\Ker}{\mathrm{Ker}}
\newcommand{\Imv}{\mathrm{Im}}
\newcommand{\T}{\mathsf{T}}
\newcommand{\one}{\mathbf{1}}

\newcommand{\ip}[2]{\left\langle #1,#2\right\rangle}

\newcommand{\cN}{\mathcal N}

\newcommand{\mSigma}{\bm{\Sigma}}
\newcommand{\mLambda}{\bm{\Lambda}}
\newcommand{\mDelta}{\bm{\Delta}}
\newcommand{\mY}{\bm{Y}}
\newcommand{\mG}{\bm{G}}
\newcommand{\mA}{\bm{A}}
\newcommand{\mB}{\bm{B}}
\newcommand{\mE}{\bm{E}}
\newcommand{\mM}{\bm{M}}
\newcommand{\mU}{\bm{U}}
\newcommand{\mV}{\bm{V}}
\newcommand{\mD}{\bm{D}}
\newcommand{\mP}{\bm{P}}
\newcommand{\mPi}{\bm{\Pi}}
\newcommand{\mQ}{\bm{Q}}
\newcommand{\mX}{\bm{X}}
\newcommand{\mF}{\bm{F}}
\newcommand{\mC}{\bm{C}}
\newcommand{\mR}{\bm{R}}
\newcommand{\mO}{\bm{O}}
\newcommand{\mH}{\bm{H}}
\newcommand{\mS}{\bm{S}}
\newcommand{\mI}{\bm{I}}
\newcommand{\mJ}{\bm{J}}
\newcommand{\vmu}{\bm{\mu}}
\newcommand{\vxi}{\bm{\xi}}
\newcommand{\vXi}{\bm{\Xi}}

\newcommand{\vg}{\bm{g}}
\newcommand{\vu}{\bm{u}}
\let\vv\relax
\newcommand{\vv}{\bm{v}}
\newcommand{\vw}{\bm{w}}
\newcommand{\vx}{\bm{x}}
\newcommand{\vy}{\bm{y}}
\newcommand{\vz}{\bm{z}}
\newcommand{\vh}{\bm{h}}
\newcommand{\vq}{\bm{q}}
\newcommand{\vr}{\bm{r}}
\newcommand{\ve}{\bm{e}}
\newcommand{\vo}{\bm{o}}
\newcommand{\va}{\bm{a}}
\newcommand{\vb}{\bm{b}}

\newcommand{\vs}{\bm{s}}
\newcommand{\valpha}{\bm{\alpha}}
\newcommand{\vbeta}{\bm{\beta}}

\newcommand{\vpi}{\bm{\pi}}
\newcommand{\mN}{\bm{N}}
\newcommand{\mT}{\bm{T}}
\newcommand{\LK}{L_K}
\newcommand{\vm}{\bm{m}}
\newcommand{\mOmega}{\bm{\Omega}}
\newcommand{\mKappa}{\bm{K}}
\newcommand{\mZ}{\bm{Z}}
\newcommand{\mW}{\bm{W}}
\usepackage{xcolor}

\numberwithin{equation}{section}
\hypersetup{
  pdftitle={Exact Sign Recovery for PCA Connectivity Analysis in Mixture Models},
  pdfauthor={Kohei Kawamoto},
  pdfkeywords={principal component analysis, PCA connectivity, high-dimensional clustering, exact recovery, anisotropic sub-Gaussian noise}
}

\title{Exact Sign Recovery for PCA Connectivity Analysis\\
in Mixture Models}
\author{Kohei Kawamoto\\
\small Joint Graduate School of Mathematics for Innovation, Kyushu University\\
\small 744 Motooka, Nishi-ku, Fukuoka 819-0395, Japan\\
\small Corresponding author: \texttt{kawamoto.kohei.532@s.kyushu-u.ac.jp}\\
\small ORCID: 0009-0001-9424-0018}
\date{}

\begin{document}
\maketitle

\begin{abstract}
We study exact sign recovery for the PCA connectivity construction of \citet{DingHe2004}, based on the first $K-1$ principal components.
For any fixed number of components with affinely independent means, we derive the oracle signal projection explicitly.
Under anisotropic sub-Gaussian noise and random component sizes, a rowwise eigenspace bound yields a sufficient condition, governed by the weakest signal direction, for recovering all connectivity signs without requiring a bounded condition number of the nonzero signal spectrum.
Under isotropic Gaussian noise with a fixed mean shape, we obtain an asymptotically sharp cutoff, with an explicit formula when the mixing proportions are equal and the means form a regular simplex.
\end{abstract}

\noindent\textbf{Keywords:} principal component analysis; high-dimensional clustering \\
\noindent\textbf{MSC 2020:} 62H30, 62H25

\section{Introduction}
Spectral clustering constructs a low-dimensional representation from singular vectors of a data matrix or eigenvectors of its Gram matrix, and then clusters the resulting representation; see \citet{vonLuxburg2007} for an overview.
For multicomponent mixtures, \citet{LofflerZhangZhou2021} established the optimality of spectral clustering under isotropic Gaussian noise, and \citet{AbbeFanWang2022} developed an $\ell_p$ perturbation theory for sub-Gaussian mixture models.
\citet{ZhangZhou2024} developed leave-one-out perturbation bounds for singular subspaces and established exponential misclassification bounds and exact recovery guarantees for spectral clustering under sub-Gaussian noise, while \citet{ChenZhang2024} studied optimal clustering for Gaussian mixtures with anisotropic covariance structures.

For two-component mixtures, clustering based directly on the first principal component or singular vector has been studied extensively.
\citet{AzizyanSinghWasserman2013} analyzed PCA-based clustering and derived minimax bounds for equally weighted two-component isotropic Gaussian mixtures, including sparse mean separation, with loss measured relative to the Bayes partition.
\citet{CaiZhang2018} analyzed the signs of the first right singular vector in two-component Gaussian mixtures and derived upper and lower bounds for the misclassification rate.
\citet[Section~4.7.1]{Vershynin2018} studied sign clustering based on the first principal component for isotropic Gaussian mixtures.
\citet{CaiHanZhang2022} established concentration bounds for heteroskedastic Wishart-type matrices and applied them to mixtures with heterogeneous variances.
\citet{KawamotoGotoTsukuda2025} derived nonasymptotic misclassification bounds under the allometric extension model, and \citet{KawamotoGotoTsukuda2026} obtained misclassification bounds and high-dimensional consistency results for two-component Gaussian mixtures with general covariance structures.

The connection between \(k\)-means and PCA was studied by \citet{ZhaHeDingGuSimon2001} and \citet{DingHe2004} through spectral relaxations of cluster membership indicators.
In particular, \citet{DingHe2004} showed that the first \(K-1\) principal components arise from a continuous relaxation of the discrete cluster membership indicators in \(k\)-means and introduced PCA connectivity analysis through the corresponding rank-\((K-1)\) projection.
When \(K=2\), this projection has rank one, and its sign structure is equivalent, up to a global sign, to clustering observations according to the signs of their first principal-component scores.
Thus, in the two-component case, the relaxation of \citet{DingHe2004} is directly connected to the extensive literature on sign-based clustering using the leading principal component described above.
For \(K\ge3\), however, the signal is represented by a multidimensional principal subspace rather than a single principal direction, and the corresponding rank-\((K-1)\) projection must be analyzed jointly.
Theoretical guarantees for recovering the cluster structure from this projection in multicomponent mixtures are less developed and are not provided by the above two-component analyses.
For a centered \(K\)-component mixture with affinely independent means, the signal space has dimension \(K-1\), and its orthogonal projection is invariant to the choice of basis.
This makes the projection, rather than individual principal components, a natural object for studying multicomponent PCA connectivity.

For arbitrary fixed $K$, we derive the oracle signal projection in terms of the component counts.
Its entries are positive within components and negative between components, with a uniform margin of order $n^{-1}$.
A rowwise eigenspace bound gives an $o_p(n^{-1})$ uniform projection error and hence exact sign recovery.
The sufficient condition depends on the weakest signal direction and allows the ratio of the nonzero signal eigenvalues to diverge.
For isotropic Gaussian noise with fixed mixing proportions and mean shape, we identify a two-sided cutoff for exact sign recovery through the leading residual covariance and an inner-product sign boundary for ellipsoidal perturbations.

Unlike our direct sign rule, \citet[Theorem~3.1 and Assumption~3.1]{AbbeFanWang2022} use a hollowed Gram matrix (with zero diagonal) followed by approximate $k$-means.
Their sub-Gaussian model allows observations in a Hilbert space and heteroscedasticity, but among other regularity conditions, assumes a bounded ratio of the nonzero eigenvalues of the Gram matrix of the component means.

We allow this ratio to diverge, while requiring full centered signal rank and the noise model based on independent coordinates in Section~\ref{sec:model}.
The sub-Gaussian results of \citet{ZhangZhou2024} likewise do not require the condition number of the nonzero signal singular values to remain bounded.
For isotropic Gaussian mixtures, \citet{LofflerZhangZhou2021} already obtain optimal misclassification rates without assumptions on spectral gaps.
Our distinction is the direct control of every off-diagonal sign of the unmodified Gram projection, rather than the misclassification rate after clustering a spectral embedding.

Sharp thresholds for label recovery are known for symmetric two-component Gaussian mixtures \citep{Ndaoud2022,AbbeFanWang2022} and, for balanced multicomponent Gaussian mixtures, for a semidefinite programming (SDP) relaxation of $k$-means \citep{ChenYang2021}.
Our Gaussian result concerns all off-diagonal signs of one spectral projection with fixed mean shape and mixing proportions.
It agrees with the known symmetric two-component threshold and extends the pairwise sign criterion to $K\ge3$; see the comparisons after Corollary~\ref{cor:gaussian-simplex}.

Section~\ref{sec:model} defines the model and estimator.
Section~\ref{sec:main} states the recovery results and the Gaussian cutoff.
Sections~\ref{sec:proofs} and~\ref{sec:gaussian-proofs} contain the proofs.

\section{Model and PCA connectivity analysis}
\label{sec:model}
We consider a sequence of problems indexed by positive integers $n\to\infty$.
Fix $K\ge2$, put $r=K-1$, and assume $n\ge K$ and $p\ge r$ for sufficiently large $n$.
The observations satisfy
\begin{equation*}
 \vx_i=\widetilde{\vmu}_{Z_i}+\vg_i,
 \qquad i=1,\ldots,n,
\end{equation*}
where the latent labels $Z_1,\ldots,Z_n\in\{1,\ldots,K\}$ are independent and identically distributed, with
\begin{equation*}
 \Pbb(Z_i=k)=\pi_k>0,
 \qquad k\in\{1,\ldots,K\},\qquad
 \sum_{k=1}^K\pi_k=1.
\end{equation*}
The mixing proportions are fixed.
Write $\vpi=(\pi_1,\ldots,\pi_K)^{\T}$ and $\pi_{\min}=\min_{k\in\{1,\ldots,K\}}\pi_k>0$.
For $k\in\{1,\ldots,K\}$, $\widetilde{\vmu}_k\in\R^p$ is the mean of component $k$.
The population mean
\[
 \overline{\vmu}=\E[\vx_i]=\sum_{k=1}^K\pi_k\widetilde{\vmu}_k
\]
is unknown.
For analysis only, define $\vmu_k=\widetilde{\vmu}_k-\overline{\vmu}$ for $k\in\{1,\ldots,K\}$, so that $\sum_k\pi_k\vmu_k=\bm0$.
For each $k\in\{1,\ldots,K\}$, let
\[
 \mathcal G_k=\{i\in\{1,\ldots,n\}:Z_i=k\},\qquad
 n_k=|\mathcal G_k|,\qquad \widehat\pi_k=n_k/n
\]
denote the index set, size, and empirical proportion of component $k$.
We center the observations by their sample mean:
\[
 \overline{\vx}=\frac1n\sum_{i=1}^n\vx_i,\qquad
 \vy_i=\vx_i-\overline{\vx},\qquad
 \ve=n^{-1/2}\one_n,\qquad \mJ_n=\mI_n-\ve\ve^{\T}.
\]
Writing $\overline{\vmu}_Z=\sum_k\widehat\pi_k\vmu_k$ and $\overline{\vg}=n^{-1}\sum_i\vg_i$, we have
\[
 \vy_i=\vmu_{Z_i}-\overline{\vmu}_Z+\vg_i-\overline{\vg},\qquad i=1,\ldots,n.
\]

We assume that the centered mean matrix has full signal rank:
\begin{equation}
 \mM_{\vmu}=(\vmu_1,\ldots,\vmu_K)\in\R^{p\times K},
 \qquad \rank(\mM_{\vmu})=r.
 \label{eq:full-signal-rank}
\end{equation}
Since $\mM_{\vmu}\vpi=\bm0$, this is equivalent to
\begin{equation}
 \Ker(\mM_{\vmu})=\Span\{\vpi\},
 \label{eq:mean-kernel}
\end{equation}
and to affine independence of the original component means.
For $K=3$, the assumption is equivalent to linear independence of $\bm{\mu}_1$ and $\bm{\mu}_2$.

The noise vectors are independent of the latent labels and have the representation
\begin{equation}
 \vg_i=\mSigma^{1/2}\vxi_i,\qquad i=1,\ldots,n,
 \label{eq:noise-model}
\end{equation}
where $\mSigma$ is symmetric positive semidefinite and $\mSigma^{1/2}$ is its unique symmetric positive semidefinite square root.
Write $\vxi_i=(\xi_{i1},\ldots,\xi_{ip})^{\T}\in\R^p$.
All coordinates in $\{\xi_{ij}:1\le i\le n,\ 1\le j\le p\}$ are independent, and there is a fixed $\kappa>0$ such that
\begin{equation}
 \E[\xi_{ij}]=0,\qquad \E[\xi_{ij}^2]=1,\qquad
 \|\xi_{ij}\|_{\psi_2}\le\kappa,
 \qquad 1\le i\le n,\ 1\le j\le p.
 \label{eq:subgaussian-assumption}
\end{equation}
Here
\[
 \|X\|_{\psi_2}=\inf\{t>0:\E[\exp(X^2/t^2)]\le2\}.
\]
Thus $\E[\vg_i]=\bm0$ and $\E[\vg_i\vg_i^{\T}]=\mSigma$.
The coordinate distributions need not otherwise be identical.
Thus, the observations $\bm{x}_i$ are independent, but they are not required to be identically distributed.
The centered observations $\vy_i$ are generally dependent.

Let $\mG=(\vg_1,\ldots,\vg_n)$ and define
\begin{equation*}
 \begin{split}
 \mY_{\vmu}&=(\vmu_{Z_1},\ldots,\vmu_{Z_n})\mJ_n,\qquad
 \mY=(\vy_1,\ldots,\vy_n)=\mY_{\vmu}+\mG\mJ_n,\\
 \mA&=\mY_{\vmu}^{\T}\mY_{\vmu}.
 \end{split}
\end{equation*}
We call $\mA$ the oracle signal Gram matrix after sample centering.
On $\{\min_{k\in\{1,\ldots,K\}}n_k>0\}$, it has rank $r$.
Denote its positive eigenvalues by $\lambda_1\ge\cdots\ge\lambda_r>0$, and let $\mV\in\R^{n\times r}$ have orthonormal columns spanning the corresponding eigenspace.
The oracle signal projection is
\begin{equation*}
 \mP=\mV\mV^{\T}.
\end{equation*}
The eigenspace and its projection are unique, although the basis need not be: $(\mV\mR)(\mV\mR)^{\T}=\mP$ for every orthogonal $\mR\in\R^{r\times r}$.
Here oracle refers to the noise-free centered signal conditional on the labels, not to the unconditional expectation of the sample Gram matrix.

Let $\widehat{\mV}\in\R^{n\times r}$ have orthonormal columns spanning a leading $r$-dimensional eigenspace of the sample Gram matrix $\mY^{\T}\mY$, chosen in $\ve^\perp$ also at zero-eigenvalue ties.
Define
\begin{equation}
 \widehat{\mP}=\widehat{\mV}\widehat{\mV}^{\T}.
 \label{eq:sample-connectivity}
\end{equation}
For distinct indices $i,j\in\{1,\ldots,n\}$, the sign rule declares the observations to be in the same component if $\widehat P_{ij}>0$ and in different components otherwise.
The sign rule requires $K$, but not the population mean, individual component means, mixing proportions, covariance matrix, or an additional $k$-means step.
We analyze the unthresholded rank-$(K-1)$ PCA connectivity projection.
\citet[Section~3, Eq.~(22)]{DingHe2004} additionally normalize the connectivity entries and threshold them at a positive level.
When $\widehat P_{ii},\widehat P_{jj}>0$, replacing $\widehat P_{ij}$ by $\widehat P_{ij}/\sqrt{\widehat P_{ii}\widehat P_{jj}}$ preserves its sign, whereas thresholding the normalized entries at a positive level defines a different recovery criterion.

On the event that at least one component is empty, choose $\mV$ measurably as an $n\times r$ matrix with orthonormal columns in $\ve^\perp$ and set $\mP=\mV\mV^{\T}$.
The event on which this arbitrary definition is needed has probability at most $\sum_k(1-\pi_k)^n$.
If the boundary eigenvalue is repeated, choose the leading sample eigenspace measurably. 
Under \eqref{eq:main-condition}, Lemma~\ref{lem:eig-lower} guarantees a gap between the $r$th and $(r+1)$st sample eigenvalues with probability tending to one.
The covariance of the component means and the signal strength are
\begin{equation*}
 \mB_{\vmu}=\sum_{k=1}^K\pi_k\vmu_k\vmu_k^{\T},
 \qquad \LK=\lambda_r(\mB_{\vmu})>0.
\end{equation*}
Eigenvalues are ordered nonincreasingly, so $\LK$ is the smallest positive eigenvalue, not the smallest eigenvalue of the full matrix when $p>r$.
The noise scales are
\begin{equation*}
 \rho=\|\mSigma\|_{\op},\qquad
 \nu=\sqrt{\tr(\mSigma^2)},\qquad
 a_n=\nu\sqrt{\log n}+\rho\log n.
\end{equation*}
The dimension $p=p_n$, covariance $\mSigma=\mSigma_n$, and centered means $\vmu_k=\vmu_{k,n}$ may depend on $n$, whereas $K$, $\kappa$, and the mixing proportions remain fixed.
Unless conditioning is explicit, probabilities and stochastic orders refer to the joint law of the labels and noise.

For a matrix $\mB$, $B_{i*}$ denotes its $i$th row, and
\[
 \|\mB\|_{\max}=\max_{i,j}|B_{ij}|,\qquad
 \|\mB\|_{2\to\infty}=\max_i\|B_{i*}\|_2.
\]
We use $\|\cdot\|_2$, $\|\cdot\|_F$, and $\|\cdot\|_{\op}$ for the Euclidean, Frobenius, and operator norms.
The symbols $\one_d$ and $\mI_d$ denote the vector of ones and identity matrix, while $\mathbb I_{\mathcal E}$ denotes an indicator.
For symmetric matrices, $\mB\preceq\mC$ means that $\mC-\mB$ is positive semidefinite.

\section{Main results}
\label{sec:main}
The assumptions of Section~\ref{sec:model} remain in force.
In the sub-Gaussian results, constants $c,C,C_B$ may depend on $K$, $\kappa$, and the fixed mixing proportions, and $C_B$ may also depend on $B$.
They do not depend on $n$, $p$, $\mSigma$, or the component means, and may change from line to line.
Constants in the Gaussian results with fixed mean shape may additionally depend on the mean shape.

\subsection{Oracle signal projection}
\label{sec:population-main}
\begin{proposition}[Structure of the oracle signal projection]
\label{prop:population-structure}
Whenever all groups are nonempty, for $i\in\mathcal G_k$ and $j\in\mathcal G_\ell$,
\begin{equation*}
 P_{ij}=\frac{\mathbb I_{\{k=\ell\}}}{n_k}-\frac1n.
\end{equation*}
Consequently, $P_{ij}>0$ if $Z_i=Z_j$ and $P_{ij}<0$ if $Z_i\ne Z_j$.
Moreover, for every fixed $B>0$, with probability $1-O(n^{-B})$,
\begin{equation*}
 \lambda_r\asymp n\LK,\qquad
 \frac{c}{n}\le |P_{ij}|\le\frac{C}{n}\quad(1\le i,j\le n),
 \qquad \|\mV\|_{2\to\infty}\le Cn^{-1/2}.
\end{equation*}
\end{proposition}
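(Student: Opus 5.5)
The plan is to identify the column space of $\mY_{\vmu}$ exactly, which gives $\mP$ in closed form. After that, the eigenvalue and entrywise bounds follow from the concentration of the $n_k$. Let $\mH\in\{0,1\}^{n\times K}$ be the membership matrix with $H_{ik}=\mathbb I_{\{Z_i=k\}}$, so that $(\vmu_{Z_1},\ldots,\vmu_{Z_n})=\mM_{\vmu}\mH^{\T}$ and $\mY_{\vmu}=\mM_{\vmu}\mH^{\T}\mJ_n$. The row space of $\mY_{\vmu}$, which is the range of $\mA$ and hence of $\mP$, is therefore $\mJ_n\mH\Ker(\mM_{\vmu})^{\perp}$ viewed inside $\R^n$.

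First I would show that this row space equals $\mathcal S=\Imv(\mH)\cap\ve^\perp$. The inclusion $\subseteq$ holds because $\mJ_n\mH\va=\mH(\va-\tfrac1n(\one_K^{\T}\mD\va)\one_K)$ with $\mD=\diag(n_1,\ldots,n_K)$, which lies in $\Imv(\mH)$ and is orthogonal to $\ve$. For equality I compare dimensions. When all $n_k>0$, $\mH$ has rank $K$, so $\dim\mathcal S=K-1=r$. The rank of $\mY_{\vmu}$ is $\rank(\mJ_n\mH\mM_{\vmu}^{\T})$, and $\mJ_n\mH$ has kernel $\Span\{\one_K\}$ and rank $r$. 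By \eqref{eq:mean-kernel}, $\Imv(\mM_{\vmu}^{\T})=\vpi^\perp$, which does not contain $\one_K$ since $\one_K^{\T}\vpi=1\neq0$. Hence $\mJ_n\mH$ is injective on $\vpi^\perp$, the rank is $r$, and the two spaces coincide. Consequently $\mP$ is the projection onto $\Imv(\mH)$ minus the projection onto $\ve$, namely $\mP=\mH\mD^{-1}\mH^{\T}-\ve\ve^{\T}$, since $\ve\in\Imv(\mH)$. This gives $P_{ij}=\mathbb I_{\{k=\ell\}}/n_k-1/n$, and the sign claims follow because $n_k<n$ when $K\ge2$.

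For the probabilistic statements, Hoeffding plus a union bound give $|\widehat\pi_k-\pi_k|\le C_B\sqrt{\log n/n}$ for all $k$ with probability $1-O(n^{-B})$, so $n_k\ge n\pi_{\min}/2$ for large $n$. On this event the entrywise bound is immediate. For $k=\ell$, $P_{ij}=(1-\widehat\pi_k)/n_k$, which lies in $[c/n,C/n]$ because $\widehat\pi_k\le1-\pi_{\min}/2$. For $k\ne\ell$, $|P_{ij}|=1/n$. Also $\|V_{i*}\|_2^2=P_{ii}\le C/n$, which gives the $2\to\infty$ bound.

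The main obstacle is $\lambda_r\asymp n\LK$, because $\lambda_r$ must be tied to the smallest nonzero eigenvalue of $\mB_{\vmu}$ and not merely bounded by the largest one. The nonzero eigenvalues of $\mA$ equal those of $\mY_{\vmu}\mY_{\vmu}^{\T}=\mM_{\vmu}\mH^{\T}\mJ_n\mH\mM_{\vmu}^{\T}=n\,\mM_{\vmu}(\widehat{\mPi}-\widehat\vpi\widehat\vpi^{\T})\mM_{\vmu}^{\T}$, where $\widehat{\mPi}=\diag(\widehat\vpi)$. I would then use $\mM_{\vmu}\vpi=\bm0$ to replace $\widehat\vpi\widehat\vpi^{\T}$ by $(\widehat\vpi-\vpi)(\widehat\vpi-\vpi)^{\T}$ inside the sandwich. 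This yields $\mM_{\vmu}\mQ\mM_{\vmu}^{\T}$ with $\mQ=\widehat{\mPi}-(\widehat\vpi-\vpi)(\widehat\vpi-\vpi)^{\T}$. On the good event, $\mQ$ satisfies $c\,\diag(\vpi)\preceq\mQ\preceq C\diag(\vpi)$, since the rank-one perturbation is $O(\log n/n)$ in operator norm while $\diag(\vpi)\succeq\pi_{\min}\mI_K$. Because congruence preserves the Loewner order, $c\,\mB_{\vmu}\preceq\mM_{\vmu}\mQ\mM_{\vmu}^{\T}\preceq C\mB_{\vmu}$. All three matrices have rank $r$ with common range $\Imv(\mM_{\vmu})$, so comparing $r$th eigenvalues through Courant–Fischer gives $cn\LK\le\lambda_r\le Cn\LK$.
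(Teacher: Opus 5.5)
Your proof is correct and follows the paper's route. You use the membership-matrix form $\mY_{\vmu}=\mM_{\vmu}\mH^{\T}\mJ_n$, identify the signal space with $\Imv(\mH)\cap\ve^\perp$ by the injectivity/rank count, obtain $\mP=\mPi-\ve\ve^{\T}$, and use count concentration for the margins and for $\|V_{i*}\|_2^2=P_{ii}\le C/n$.

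The only variation is in $\lambda_r\asymp n\LK$. The paper compares weights in the variational form $\vu^{\T}\widehat{\mB}_{\mathrm c}\vu=\min_b\sum_k\widehat\pi_k(\vu^{\T}\vmu_k-b)^2$ and gets the clean sandwich $(1\pm\tau_n)\mB_{\vmu}$. Your exact identity via $\mM_{\vmu}\vpi=\bm0$ gives the same Loewner comparison up to a harmless $O(\log n/n)$ rank-one loss.
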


\subsection{Rowwise consistency and exact sign recovery}
\label{sec:recovery-main}
Consider the signal-to-noise condition
\begin{equation}
 \frac{\LK}{\rho\log n+a_n/\sqrt n}\longrightarrow\infty.
 \label{eq:main-condition}
\end{equation}
For $\mSigma=\bm0$, the ratio is interpreted as $+\infty$.
Define
\begin{equation}
 \epsilon_n
 =
 \sqrt{\frac{\rho\log n}{\LK}}
 +
 \frac{a_n}{\sqrt n\,\LK}.
 \label{eq:epsilon-main}
\end{equation}
Under \eqref{eq:main-condition}, $\epsilon_n\to0$.

\begin{theorem}[Rowwise consistency of the leading eigenspace]
\label{thm:rowwise}
Assume \eqref{eq:main-condition}.
Set $\mO=\mV^{\T}\widehat{\mV}$ and let $\mR\in\R^{r\times r}$ be a measurable orthogonal polar factor of $\mO$.
For every fixed $B>0$ and all sufficiently large $n$, with probability $1-O(n^{-B})$,
\begin{equation}
 \|\widehat{\mV}-\mV\mR\|_{2\to\infty}
 \le C_B\frac{\epsilon_n}{\sqrt n}.
 \label{eq:rowwise-rate}
\end{equation}
On this event, $\mO$ is nonsingular and its orthogonal polar factor is unique.
In particular,
\begin{equation*}
 \|\widehat{\mV}-\mV\mR\|_{2\to\infty}=o_p(n^{-1/2}).
\end{equation*}
\end{theorem}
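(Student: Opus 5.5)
Our approach is a leave-one-out $2\to\infty$ eigenvector perturbation argument for the Gram matrix $\mY^{\T}\mY$, in the spirit of \citet{AbbeFanWang2022} and \citet{ZhangZhou2024}. Throughout we work on the event $\mathcal E_0$ of Proposition~\ref{prop:population-structure}. On $\mathcal E_0$ all $n_k\asymp n$, $\lambda_r\asymp n\LK$ and $\|\mV\|_{2\to\infty}\le Cn^{-1/2}$, and $\mathcal E_0$ has probability $1-O(n^{-B})$.

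\textbf{Decomposition.} Write $\mY=\mY_{\vmu}+\mG\mJ_n$, so that
\[
\mY^{\T}\mY=\mA+\mE,\qquad
\mE=\mY_{\vmu}^{\T}\mG\mJ_n+\mJ_n\mG^{\T}\mY_{\vmu}+\mJ_n(\mG^{\T}\mG-\tr(\mSigma)\mI_n)\mJ_n+\tr(\mSigma)\mJ_n .
\]
The last term commutes with $\mP$ and with the projection onto $\ve^\perp$. It therefore only shifts eigenvalues and leaves the eigenspace inside $\ve^\perp$ unchanged, so we drop it. Call the remaining perturbation $\mE_0$.

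\textbf{Operator-norm bounds.} Conditionally on the labels, we bound $\|\mE_0\|_{\op}$ by
\[
C\Bigl(\sqrt{n\LK}\,\bigl(\sqrt{\rho n}+\sqrt{\rho\log n}\bigr)+\rho n+\sqrt n\,a_n\Bigr).
\]
The cross term gives $\|\mY_{\vmu}\|_{\op}\|\mG\mJ_n\mV\|$-type bounds via sub-Gaussian chaining on the $r$-dimensional range. The hollow/centered Gram term uses the Hanson--Wright inequality together with a standard bound for $\|\mG^{\T}\mG-\tr(\mSigma)\mI\|_{\op}$ of order $\rho n+\sqrt n\,\nu+\ldots$. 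We expect this to be the content of Lemma~\ref{lem:eig-lower}. Dividing by $\lambda_r\asymp n\LK$ gives $\|\mE_0\|_{\op}/\lambda_r\lesssim\epsilon_n+\sqrt{\rho/\LK}$. We need to check that the $\rho n$ part is $o(n\LK)$, which holds under \eqref{eq:main-condition}. Weyl's inequality then gives an eigengap, and Davis--Kahan gives $\|\widehat{\mV}\widehat{\mV}^{\T}-\mP\|_{\op}\to0$. Hence $\mO$ has singular values near $1$, so it is nonsingular and its polar factor $\mR$ is unique.

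\textbf{Rowwise expansion.} We use
\[
\widehat{\mV}-\mV\mR=\bigl(\mE_0\widehat{\mV}\widehat{\mLambda}^{-1}\bigr)+\mV(\mO-\mR)+\bigl(\mA\widehat{\mV}\widehat{\mLambda}^{-1}-\mV\mO\bigr),
\]
where $\widehat{\mLambda}\succeq c\,n\LK$ contains the shifted leading eigenvalues. The second and third terms are controlled by $\|\mV\|_{2\to\infty}$ times operator-norm quantities of order $\epsilon_n$, using $\|\mO-\mR\|\le\|\sin\Theta\|^2$. The main obstacle is the first term, namely bounding $\max_i\|(\mE_0)_{i*}\widehat{\mV}\|_2$. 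The difficulty is that row $i$ of the noise is dependent on $\widehat{\mV}$.

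\textbf{Leave-one-out step.} For each $i$ we construct $\widehat{\mV}^{(i)}$ from the Gram matrix with the noise $\vg_i$ replaced by $0$ (recentered appropriately). We then split
\[
(\mE_0)_{i*}\widehat{\mV}=(\mE_0)_{i*}\widehat{\mV}^{(i)}\mH_i+(\mE_0)_{i*}\bigl(\widehat{\mV}-\widehat{\mV}^{(i)}\mH_i\bigr).
\]
\begin{itemize}
\item For the first piece, conditional independence of $\vg_i$ and $\widehat{\mV}^{(i)}$ gives, via Hanson--Wright and sub-Gaussian tail bounds, a bound of order
\[
\sqrt{n\LK}\sqrt{\rho\log n}\,\|\widehat{\mV}^{(i)}\|_{2\to\infty}\sqrt n+a_n\sqrt n\,\|\widehat{\mV}^{(i)}\|_{2\to\infty}.
\]
\item For the second piece, Davis--Kahan applied to the rank-two difference of the Gram matrices bounds $\|\widehat{\mV}-\widehat{\mV}^{(i)}\mH_i\|_F$ by a small multiple of $\|\widehat{\mV}\|_{2\to\infty}$.
\end{itemize}
Combining these yields a self-bounding inequality
\[
\|\widehat{\mV}\|_{2\to\infty}\le Cn^{-1/2}+o(1)\,\|\widehat{\mV}\|_{2\to\infty}.
\]
Hence $\|\widehat{\mV}\|_{2\to\infty}\lesssim n^{-1/2}$. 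Substituting back gives a first-term bound of order $(\sqrt{\rho\log n/\LK}+a_n/(\sqrt n\LK))n^{-1/2}=\epsilon_n/\sqrt n$.

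Finally, a union bound over $i$ with tail level $n^{-B-1}$ gives probability $1-O(n^{-B})$, which proves \eqref{eq:rowwise-rate}. Since $\epsilon_n\to0$, the $o_p(n^{-1/2})$ statement follows.
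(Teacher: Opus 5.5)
Your outline is a standard leave-one-out argument, but as written it proves the theorem only under an extra bound on the signal condition number $\lambda_1(\mB_{\vmu})/\LK$. The statement deliberately avoids this assumption, since \eqref{eq:main-condition} involves only $\LK$.

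\textbf{The operator-norm step fails.} You use $\sqrt{n\LK}$ for $\|\mY_{\vmu}\|_{\op}$, but in fact $\|\mY_{\vmu}\|_{\op}=\sqrt{\lambda_1}\asymp\sqrt{n\lambda_1(\mB_{\vmu})}$. Testing $\mE_0$ (which is the paper's $\mE$) against $\vv_1$ and the normalized vector $\mQ\mG^{\T}\vu_1$ gives $\|\mE_0\|_{\op}\ge\sqrt{\lambda_1}\,\|\mQ\mG^{\T}\vu_1\|_2-\|\mM_{\mSigma}\|_{\op}$. Take $K\ge3$, $\mSigma=\sigma^2\mI_p$ with $p\le n$, $\LK=\sigma^2\log^2 n$ and $\lambda_1(\mB_{\vmu})=\sigma^2\log^5 n$. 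Then \eqref{eq:main-condition} holds, yet $\|\mE_0\|_{\op}/\lambda_r\gtrsim\sqrt{\log n}\to\infty$. So Weyl gives no eigengap. Davis--Kahan also gives no $\sin\Theta$ control, even in the sharper form $\|\mQ\mE_0\mV\|_{\op}/\mathrm{gap}$, because $\mQ\mE_0\mV$ contains $\mQ\mG^{\T}\mU\mD$. Consequently neither the eigengap nor the nonsingularity of $\mO$ is established.

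\textbf{The same $\sqrt{\lambda_1}$ factor enters elsewhere.} Your third term equals $-\mV\mV^{\T}\mE_0\widehat{\mV}\widehat{\mLambda}^{-1}$. It contains $\mV\mD\mU^{\T}\mG\widehat{\mV}\widehat{\mLambda}^{-1}$, which is not $O(\epsilon_n)$ in operator norm once the ratio grows fast enough. This piece is cancelled exactly by the matching part of your first term, because first plus third equals $\mQ\mE_0\widehat{\mV}\widehat{\mLambda}^{-1}=\mQ\widehat{\mV}$. That matching part, $(\vmu_{Z_i}-\overline{\vmu}_Z)^{\T}\mG\widehat{\mV}$ in row $i$, depends on every noise column. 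Your conditional-independence step therefore does not cover it. Likewise, your bound for $\vg_i^{\T}\mY_{\vmu}\widehat{\mV}^{(i)}$ silently needs $\|\mY_{\vmu}\widehat{\vv}_k\|_2\lesssim\sqrt{\widehat\lambda_k}$, and you give no argument for this.

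\textbf{What is missing is the mechanism by which only $\LK$ matters.} The paper never bounds $\|\mE\|_{\op}$. The gap comes from Courant--Fischer, using the scaled block $\mD^{-1}\mV^{\T}\mE\mV\mD^{-1}$ on $\Imv(\mV)$ and $\mQ\mE\mQ=\mQ\mM_{\mSigma}\mQ$ on $\Imv(\mQ)$ (Lemma~\ref{lem:eig-lower}).

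It then bounds only $\mQ\widehat{\mV}$. There $\mQ\mY_{\vmu}^{\T}=\bm0$ removes the cross term, leaving $\mQ\widehat{\vv}_k=(\mI_n-\mQ\mM_{\mSigma}\mQ/\widehat\lambda_k)^{-1}\mQ(\mG^{\T}\va_k+\mM_{\mSigma}\vv_k)$. Here $\vo_k=\mV^{\T}\widehat{\vv}_k$, $\va_k=\mU\mD\vo_k/\widehat\lambda_k$ and $\vv_k=\mV\vo_k/\widehat\lambda_k$.

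The per-eigenvector bound $\|\mD\vo_k\|_2^2\le2\widehat\lambda_k$ is the key point. It follows from the secular equation, from positivity of $\mC^{\T}(\widehat\lambda_k\mI_n-\mQ\mM_{\mSigma}\mQ)^{-1}\mC$ with $\mC=\mQ\mE\mV$, and from the scaled block. It gives $\|\mSigma^{1/2}\va_k\|_2\lesssim\sqrt{\rho/(n\LK)}$ regardless of $\lambda_1$ (Lemma~\ref{lem:random-params}).

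A leave-one-out Schur-complement and Neumann-series resolvent bound, uniform over $s\ge s_0$ and these parameter sets, then yields $\|\mQ\widehat{\mV}\|_{2\to\infty}\le C_B\epsilon_n/\sqrt n$. The signal-side error is second order, since $\|\mO-\mR\|_{\op}\le\|\mQ\widehat{\mV}\|_{\op}^2$ (Lemma~\ref{lem:procrustes}).

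Without these ingredients your argument gives the result only when the condition number is bounded, which is essentially the setting of \citet{AbbeFanWang2022} that the paper aims to go beyond.
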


\begin{theorem}[Exact sign recovery]
\label{thm:sign-recovery}
Assume \eqref{eq:main-condition}.
For every fixed $B>0$ and all sufficiently large $n$, with probability $1-O(n^{-B})$,
\begin{equation*}
 \|\widehat{\mP}-\mP\|_{\max}\le C_B\frac{\epsilon_n}{n}.
\end{equation*}
Consequently,
\begin{equation*}
 \|\widehat{\mP}-\mP\|_{\max}=o_p(n^{-1}),
\end{equation*}
and, for every fixed $B>0$,
\begin{equation*}
 \Pbb\{\sgn(\widehat P_{ij})=\sgn(P_{ij})
                  \text{ for all }1\le i,j\le n\}=1-O(n^{-B}).
\end{equation*}
Thus all relations within and between components, and hence the partition up to relabeling, are recovered with probability tending to one.
\end{theorem}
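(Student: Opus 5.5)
The plan is to deduce Theorem~\ref{thm:sign-recovery} directly from Theorem~\ref{thm:rowwise} and Proposition~\ref{prop:population-structure}, working on the intersection of their high-probability events. Each of these events has probability $1-O(n^{-B})$, and we also intersect with $\{\min_k n_k>0\}$, whose complement has probability at most $\sum_k(1-\pi_k)^n$, which is exponentially small.

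\textbf{Step 1: the projection is basis-invariant.} Let $\mR$ be the orthogonal polar factor from Theorem~\ref{thm:rowwise}. Since $\mR\mR^{\T}=\mI_r$, we have $\mP=(\mV\mR)(\mV\mR)^{\T}$. Writing $\mDelta=\widehat{\mV}-\mV\mR$ gives
\[
 \widehat{\mP}-\mP=\mDelta(\mV\mR)^{\T}+(\mV\mR)\mDelta^{\T}+\mDelta\mDelta^{\T}.
\]

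\textbf{Step 2: entrywise bound.} By Cauchy--Schwarz, each entry satisfies
\[
 |(\widehat{\mP}-\mP)_{ij}|\le 2\|\mDelta\|_{2\to\infty}\|\mV\|_{2\to\infty}+\|\mDelta\|_{2\to\infty}^2 .
\]
Here we used that $\|\mV\mR\|_{2\to\infty}=\|\mV\|_{2\to\infty}$, because $\mR$ is orthogonal. Proposition~\ref{prop:population-structure} gives $\|\mV\|_{2\to\infty}\le Cn^{-1/2}$, and \eqref{eq:rowwise-rate} gives $\|\mDelta\|_{2\to\infty}\le C_B\epsilon_n n^{-1/2}$. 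Therefore
\[
 \|\widehat{\mP}-\mP\|_{\max}\le C_B\frac{\epsilon_n}{n}+C_B^2\frac{\epsilon_n^2}{n}\le C_B'\frac{\epsilon_n}{n},
\]
where the last step uses $\epsilon_n\to0$ under \eqref{eq:main-condition}. Since $\epsilon_n\to0$ deterministically, this bound also yields the $o_p(n^{-1})$ statement.

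\textbf{Step 3: sign recovery.} On the same event, Proposition~\ref{prop:population-structure} gives $|P_{ij}|\ge c/n$ for every pair $(i,j)$. For $n$ large enough that $C_B'\epsilon_n<c$, every entry of $\widehat{\mP}$ lies within $C_B'\epsilon_n/n<c/n\le|P_{ij}|$ of the corresponding $P_{ij}$. Hence $\sgn(\widehat P_{ij})=\sgn(P_{ij})$ for all $i,j$, including diagonal entries.

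The explicit formula $P_{ij}=\mathbb I_{\{k=\ell\}}/n_k-1/n$ shows that $P_{ij}>0$ exactly when $Z_i=Z_j$. The sign pattern of $\widehat{\mP}$ therefore defines the equivalence relation ``same component,'' and this relation recovers the partition up to relabeling.

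\textbf{Main obstacle.} All the real difficulty lives in Theorem~\ref{thm:rowwise} and in the delocalization bound of Proposition~\ref{prop:population-structure}, both of which are assumed here. The only points needing care are that the polar-factor alignment cancels exactly in $\mP$, and that the constants are uniform in $(i,j)$. Both are immediate from the argument above, so I expect no step of this deduction to be a serious obstacle.
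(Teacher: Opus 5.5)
Your proposal is correct and follows the paper's proof essentially step for step. It uses the same decomposition of $\widehat{\mP}-\mP$ through $\mDelta=\widehat{\mV}-\mV\mR$, the same entrywise Cauchy--Schwarz bound combined with \eqref{eq:rowwise-rate} and $\|\mV\|_{2\to\infty}\le Cn^{-1/2}$, and the same comparison with the $c/n$ margin from Proposition~\ref{prop:population-structure}, on an intersection of events each of probability $1-O(n^{-B})$.
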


The uniform error in Theorem~\ref{thm:sign-recovery} is smaller than the $n^{-1}$ margin in Proposition~\ref{prop:population-structure}.
The sufficient condition is therefore governed by the weakest signal direction $\LK$, even when the other signal directions are strong.

\begin{corollary}[A sufficient condition in terms of effective rank]
\label{cor:effective-rank}
Suppose $\rho>0$ and set $r_2(\mSigma)=\tr(\mSigma^2)/\|\mSigma\|_{\op}^2$.
If
\begin{equation*}
 \frac{\LK}{\rho\{\log n+\sqrt{r_2(\mSigma)\log n/n}\}}
                  \longrightarrow\infty,
\end{equation*}
then the conclusions of Theorems~\ref{thm:rowwise} and~\ref{thm:sign-recovery} hold.
When $\mSigma=\sigma^2\mI_p$, one has $\rho=\sigma^2$ and $r_2(\mSigma)=p$, so it is sufficient that
\[
 \frac{\LK}{\sigma^2\{\log n+\sqrt{p\log n/n}\}}\longrightarrow\infty.
\]
\end{corollary}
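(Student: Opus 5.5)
The corollary follows from verifying that its hypothesis implies \eqref{eq:main-condition}; Theorems~\ref{thm:rowwise} and~\ref{thm:sign-recovery} then apply as stated. I will rewrite the denominator of \eqref{eq:main-condition} in terms of $\rho$ and $r_2(\mSigma)$. Since $\rho>0$, we have $\nu=\rho\sqrt{r_2(\mSigma)}$, so
\[
 a_n=\rho\sqrt{r_2(\mSigma)\log n}+\rho\log n ,
\]
and therefore
\[
 \rho\log n+\frac{a_n}{\sqrt n}
 =\rho\log n\Bigl(1+\frac1{\sqrt n}\Bigr)+\rho\sqrt{\frac{r_2(\mSigma)\log n}{n}}
 \le 2\rho\Bigl\{\log n+\sqrt{\frac{r_2(\mSigma)\log n}{n}}\Bigr\}.
\]

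The ratio in \eqref{eq:main-condition} is thus at least half the ratio in the corollary's hypothesis. That ratio diverges by assumption, so \eqref{eq:main-condition} holds. There is no real obstacle here; one only has to track the constant from the $\rho\log n/\sqrt n$ term, which is dominated by $\rho\log n$.

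For the isotropic case $\mSigma=\sigma^2\mI_p$, the hypothesis $\rho>0$ means $\sigma^2>0$. Then $\rho=\sigma^2$, and $\tr(\mSigma^2)=p\sigma^4$, so $r_2(\mSigma)=p$. Substituting these into the general condition gives the displayed isotropic sufficient condition.
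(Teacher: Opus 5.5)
Your proposal is correct and matches the paper's proof. Both substitute $\nu=\rho\sqrt{r_2(\mSigma)}$ into $a_n$, absorb the $\rho\log n/\sqrt n$ term into $\rho\log n$ to reduce \eqref{eq:main-condition} to the stated hypothesis (you make the factor $2$ explicit), and handle the isotropic case by the same direct computation.
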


\begin{corollary}[Angle condition for three components]
\label{cor:three-angle}
Let $K=3$ and let $\theta\in(0,\pi)$ be the angle between $\vmu_1$ and $\vmu_2$.
Put
\begin{equation*}
 L_{\mathrm{ang}}=\min\{\|\vmu_1\|_2^2,\|\vmu_2\|_2^2\}\sin^2\theta.
\end{equation*}
Then $L_3\asymp L_{\mathrm{ang}}$.
In particular, the conclusions of Theorems~\ref{thm:rowwise} and~\ref{thm:sign-recovery} hold if
\[
 \frac{L_{\mathrm{ang}}}{\rho\log n+a_n/\sqrt n}\longrightarrow\infty.
\]
\end{corollary}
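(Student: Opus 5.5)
The plan is to reduce $\mB_{\vmu}$ for $K=3$ to a $2\times2$ Gram-type matrix in the basis $\vmu_1,\vmu_2$, and to show that its smaller eigenvalue is comparable to $L_{\mathrm{ang}}$ with constants depending only on $\vpi$. The second claim then follows at once: once $L_3\asymp L_{\mathrm{ang}}$, the displayed condition is equivalent to \eqref{eq:main-condition}, and Theorems~\ref{thm:rowwise} and~\ref{thm:sign-recovery} apply.

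\emph{Step 1 (reduction).} Since $\pi_1\vmu_1+\pi_2\vmu_2+\pi_3\vmu_3=\bm0$, we have $\vmu_3=-(\pi_1\vmu_1+\pi_2\vmu_2)/\pi_3$. Put $\mM=(\vmu_1,\vmu_2)\in\R^{p\times2}$. Then $\mB_{\vmu}=\mM\mQ\mM^{\T}$, where
\[
\mQ=\diag(\pi_1,\pi_2)+\pi_3^{-1}(\pi_1,\pi_2)^{\T}(\pi_1,\pi_2).
\]
The matrix $\mQ$ is fixed and positive definite. Its eigenvalues lie in $[q_-,q_+]$, with $q_-\ge\min(\pi_1,\pi_2)$ and $q_+\le C$, both depending only on $\vpi$. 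Hence
\[
q_-\mM\mM^{\T}\preceq\mB_{\vmu}\preceq q_+\mM\mM^{\T}.
\]
By Courant--Fischer, $\lambda_2(\mB_{\vmu})\asymp\lambda_2(\mM\mM^{\T})$. The latter equals $\lambda_{\min}(\mM^{\T}\mM)$, because the nonzero eigenvalues of $\mM\mM^{\T}$ and $\mM^{\T}\mM$ coincide and $\mM$ has rank $2$ by \eqref{eq:full-signal-rank}. (Here the Loewner sandwich gives comparison of the second eigenvalue directly, since both sides have rank $2$.)

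\emph{Step 2 (the $2\times2$ Gram matrix).} Write $a=\|\vmu_1\|_2^2$ and $b=\|\vmu_2\|_2^2$, and assume without loss of generality $a\le b$. Then
\[
\mM^{\T}\mM=\begin{pmatrix}a&\sqrt{ab}\cos\theta\\ \sqrt{ab}\cos\theta&b\end{pmatrix},
\qquad \det=ab\sin^2\theta,\qquad \tr=a+b.
\]
Let $\lambda_{\min}$ denote its smaller eigenvalue. Because $\lambda_{\max}\in[\tr/2,\tr]$, we get
\[
\lambda_{\min}=\frac{\det}{\lambda_{\max}}\in\Bigl[\frac{ab\sin^2\theta}{a+b},\frac{2ab\sin^2\theta}{a+b}\Bigr].
\]
Since $b\le a+b\le 2b$, it follows that $\lambda_{\min}\in[\tfrac12 a\sin^2\theta,\,2a\sin^2\theta]$. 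This gives $\lambda_{\min}\asymp\min(a,b)\sin^2\theta=L_{\mathrm{ang}}$.

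\emph{Step 3 (conclusion).} Combining the two steps gives $L_3\asymp L_{\mathrm{ang}}$, with absolute constants times constants depending on $\vpi$. Therefore $L_{\mathrm{ang}}/(\rho\log n+a_n/\sqrt n)\to\infty$ implies \eqref{eq:main-condition}.

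The only mildly delicate point is the spectral comparison in Step 1, namely that the Loewner sandwich controls the \emph{second} eigenvalue when $p>2$. Everything else is an elementary computation.
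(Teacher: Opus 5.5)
Your proof is correct and follows the paper's argument essentially step by step. Both use the same reduction $\mB_{\vmu}=\mM\mQ\mM^{\T}$ with a fixed positive definite $2\times2$ weight matrix (the paper's $\mT_{\pi}$), the Loewner sandwich with Courant--Fischer, and $\lambda_{\min}=\det/\lambda_{\max}$ for the Gram matrix of $\vmu_1,\vmu_2$. The only difference is that you use $\lambda_{\max}\ge\tr/2$ where the paper uses $\lambda_{\max}\ge\max(a,b)$; this costs a factor $2$ in your upper constant (the paper obtains $\lambda_{\min}\le L_{\mathrm{ang}}$ exactly) and does not affect $L_3\asymp L_{\mathrm{ang}}$.
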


For $K=3$, near collinearity weakens the second signal direction through $L_{\mathrm{ang}}$.
More generally, strict negativity between components of the projection onto the positive signal space requires rank $K-1$; see Proposition~\ref{prop:rank-necessity}.

\subsection{A sharp cutoff under isotropic Gaussian noise}
\label{sec:gaussian-main}
We now specialize to isotropic Gaussian noise and a fixed mean shape.
The assumptions below apply only to this subsection and Section~\ref{sec:gaussian-proofs}; the sufficient condition \eqref{eq:main-condition} is not imposed.

Fix vectors $\vm_1,\ldots,\vm_K\in\R^r$ and the positive mixing proportions from Section~\ref{sec:model} such that
\begin{equation}
 \sum_{k=1}^K\pi_k\vm_k=\bm0,
 \qquad
 \mD_0:=\sum_{k=1}^K\pi_k\vm_k\vm_k^{\T}\succ\bm0,
 \qquad \lambda_{\min}(\mD_0)=1.
 \label{eq:gauss-shape}
\end{equation}
The normalization fixes the scale of the mean configuration.
For each $n$, let $p=p_n\ge r$, let $\mU_n\in\R^{p\times r}$ satisfy $\mU_n^{\T}\mU_n=\mI_r$, and suppose that
\begin{equation}
 \vx_i=\overline{\vmu}+\sqrt{L_n}\,\mU_n\vm_{Z_i}+\sigma_n\vxi_i,
 \qquad i\in\{1,\ldots,n\},
 \qquad \vxi_i\overset{\mathrm{iid}}\sim N(\bm0,\mI_p),
 \label{eq:gauss-model}
\end{equation}
where $L_n,\sigma_n>0$ and the noise is independent of the latent labels.
The unknown population mean $\overline{\vmu}$ is arbitrary and disappears upon sample centering.
In particular, $\mSigma=\sigma_n^2\mI_p$ and $\mB_{\vmu}=L_n\mU_n\mD_0\mU_n^{\T}$, so $\LK=L_n$.
The dimension, scale, and orientation may vary with $n$, while the shape vectors and mixing proportions are fixed.
This fixes the ratio of the positive population signal eigenvalues, but does not require equal proportions or means forming a regular simplex.

Define
\begin{equation*}
 \vq_k=\mD_0^{-1/2}\vm_k\quad(k\in\{1,\ldots,K\}),
 \qquad t_n=\frac{\LK}{\sigma_n^2},\qquad \gamma_n=\frac pn.
\end{equation*}
These analytic coordinates satisfy
\begin{equation}
 \sum_k\pi_k\vq_k=\bm0,\qquad
 \sum_k\pi_k\vq_k\vq_k^{\T}=\mI_r,\qquad
 \vq_k^{\T}\vq_\ell=\frac{\mathbb I_{\{k=\ell\}}}{\pi_k}-1.
 \label{eq:gauss-q-geometry}
\end{equation}
These coordinates are used only in the analysis; the estimator remains \eqref{eq:sample-connectivity}.
For a candidate signal-to-noise ratio $t>0$, put
\begin{equation}
 \mOmega_n(t)=\frac1t\mD_0^{-1}
                  +\frac{\gamma_n}{t^2}\mD_0^{-2},
 \qquad
 s_{k\ell}=\begin{cases}1,&k=\ell,\\-1,&k\ne\ell.\end{cases}
 \label{eq:gauss-Omega}
\end{equation}
For every positive definite $r\times r$ matrix $\mW$, define
\begin{equation}
 \mathfrak d(\mW)
 =\min_{1\le k\le\ell\le K}\;
  \inf_{\substack{\vx,\vy\in\R^r:\\
   s_{k\ell}(\vq_k+\vx)^{\T}(\vq_\ell+\vy)\le0}}
  \max\{\vx^{\T}\mW^{-1}\vx,\ \vy^{\T}\mW^{-1}\vy\}.
 \label{eq:gauss-d-functional}
\end{equation}
Thus $\mathfrak d(\mW)$ is the smallest squared ellipsoidal radius permitting a zero or incorrect sign.
The case $k=\ell$ concerns two distinct observations, not a diagonal entry.
The optimization has $2r$ variables, independently of $p$.
The maximum, rather than the sum, of the two quadratic costs reflects that both perturbations must lie in their respective limiting point sets.
Lemma~\ref{lem:gauss-extreme-cloud} shows that the standard Gaussian point set in each component, divided by $\sqrt{\log n}$, converges to the ball of radius $\sqrt2$, giving the scale $2\log n$ below.

\begin{theorem}[Sharp Gaussian cutoff for exact sign recovery]
\label{thm:gaussian-cutoff}
Under \eqref{eq:gauss-shape}--\eqref{eq:gauss-model}, for every $n\ge K$ there is a unique $t_{\mathrm{crit},n}>0$ such that
\begin{equation}
 \mathfrak d\!\left(\mOmega_n(t_{\mathrm{crit},n})\right)=2\log n.
 \label{eq:gauss-critical-equation}
\end{equation}
Set $L_{\mathrm{crit},n}=\sigma_n^2t_{\mathrm{crit},n}$ and define
\begin{equation*}
 \mathcal S_n
 =\{s_{Z_iZ_j}\widehat P_{ij}>0
                   \text{ for all }1\le i<j\le n\}.
\end{equation*}
For every fixed $\varepsilon\in(0,1)$,
\begin{align*}
 \LK\ge(1+\varepsilon)L_{\mathrm{crit},n}
 \text{ eventually}
 &\quad\Longrightarrow\quad \Pbb(\mathcal S_n)\longrightarrow1,\\
 \LK\le(1-\varepsilon)L_{\mathrm{crit},n}
 \text{ eventually}
 &\quad\Longrightarrow\quad \Pbb(\mathcal S_n^c)\longrightarrow1.
\end{align*}
More generally, the success and failure conclusions hold, respectively, under
\[
 \liminf_{n\to\infty}
 \frac{\mathfrak d(\mOmega_n(t_n))}{2\log n}>1
 \qquad\text{and}\qquad
 \limsup_{n\to\infty}
 \frac{\mathfrak d(\mOmega_n(t_n))}{2\log n}<1.
\]
There is no restriction on the growth of $p_n\ge r$.
\end{theorem}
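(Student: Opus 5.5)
The proof has two parts: existence and uniqueness of $t_{\mathrm{crit},n}$, and a matching two-sided analysis of the signs of $\widehat P_{ij}$.

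\textbf{Existence and uniqueness of $t_{\mathrm{crit},n}$.} We use the monotonicity and continuity of $\mathfrak d$. If $\mW_1\preceq\mW_2$, then $\mW_2^{-1}\preceq\mW_1^{-1}$, so the quadratic costs decrease and $\mathfrak d(\mW_2)\le\mathfrak d(\mW_1)$. Homogeneity gives $\mathfrak d(c\mW)=c^{-1}\mathfrak d(\mW)$. The map $t\mapsto\mOmega_n(t)$ is strictly decreasing in the Loewner order. Also, $\mOmega_n(t)\to\bm0$ as $t\to\infty$ and $\mOmega_n(t)\succeq t^{-1}\mD_0^{-1}\to\infty$ as $t\to0$. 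The infimum in \eqref{eq:gauss-d-functional} is positive and finite for each $(k,\ell)$: it is finite because the constraint set is nonempty, and positive because $s_{k\ell}\vq_k^{\T}\vq_\ell>0$ for $k\ne\ell$ by \eqref{eq:gauss-q-geometry}, and $\vq_k^{\T}\vq_k=\pi_k^{-1}-1>0$. Strict monotonicity in $t$ then follows from sandwiching $\mOmega_n(t')\preceq c\,\mOmega_n(t)$ with $c<1$ for $t'>t$, together with homogeneity. Continuity follows because $\mathfrak d(\mW)$ is an infimum of costs that are uniformly continuous in $\mW^{-1}$ on compacts. The intermediate value theorem then gives a unique root of \eqref{eq:gauss-critical-equation}. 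The same sandwich shows that $\mathfrak d(\mOmega_n(t_n))/(2\log n)$ is bounded away from $1$ whenever $t_n/t_{\mathrm{crit},n}$ is, so the first formulation of the theorem reduces to the "more generally" formulation.

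\textbf{First-order expansion of $\widehat P_{ij}$.} Write $\widehat{\mV}$ through the sample scores. Since $\mY^{\T}\mY$ and $\mY\mY^{\T}$ share their nonzero spectrum, set $\widehat{\mV}=\mY^{\T}\widehat{\mU}\widehat{\mLambda}^{-1/2}$, where $\widehat{\mU}$ collects the top $r$ eigenvectors of $\mY\mY^{\T}$. Then $\widehat P_{ij}=\vy_i^{\T}\widehat{\mU}\widehat{\mLambda}^{-1}\widehat{\mU}^{\T}\vy_j$. The key step is to show that, uniformly in $i$, the $r$-vector $\widehat{\mLambda}^{-1/2}\widehat{\mU}^{\T}\vy_i\cdot\sqrt n$ equals, up to a fixed rotation and a $(1+o(1))$ factor, $\vq_{Z_i}+\mOmega_n(t_n)^{1/2}\vh_i+o(\sqrt{\log n}\,\|\mOmega_n^{1/2}\|)$, where the $\vh_i$ are i.i.d.\ $N(\bm0,\mI_r)$. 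The two terms of $\mOmega_n$ have separate sources. The $t^{-1}\mD_0^{-1}$ term is the noise component lying inside the signal subspace, $\sigma_n\mU_n^{\T}\vxi_i$, rescaled by $\sqrt{L_n}$. The $\gamma_n t^{-2}\mD_0^{-2}$ term is the leakage of the orthogonal noise through the empirical eigenvectors: $\widehat{\mU}$ tilts toward the noise by an amount of order $\sigma_n\sqrt p/(\sqrt n\sqrt{L_n})$ in each signal direction, with per-direction size governed by $\mD_0^{-1}$, and this produces an additional $N(\bm0,\gamma_n t^{-2}\mD_0^{-2})$-type projected component that is asymptotically independent across $i$. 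I would obtain this decomposition with a leave-one-out argument: replace $\widehat{\mU}$ by the eigenvectors computed without observation $i$, make the part of $\vxi_i$ orthogonal to $\mU_n$ exactly Gaussian conditional on those eigenvectors, and control the denominator $\widehat{\mLambda}$ through spiked-covariance (BBP-type) eigenvalue asymptotics. Where $t_n$ is large, the error bounds of Theorem~\ref{thm:rowwise} are used as a priori control. Since the errors must be $o(\sqrt{\log n})$ relative to the ellipsoid scale with no restriction on $p_n$, this step, together with the correlation across $i$ induced by the shared $\widehat{\mU}$, is the main obstacle. I would first try conditioning on a single leave-two-out eigenspace for each pair $(i,j)$.

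\textbf{Success and failure.} With the expansion in hand, $s_{Z_iZ_j}\widehat P_{ij}$ has the sign of $s_{k\ell}(\vq_k+\vx_i)^{\T}(\vq_\ell+\vx_j)$, where $\vx_i=\mOmega_n^{1/2}\vh_i$ up to negligible error. By Lemma~\ref{lem:gauss-extreme-cloud}, the set $\{\vh_i/\sqrt{\log n}:i\in\mathcal G_k\}$ lies eventually within the ball of radius $\sqrt{2}(1+\delta)$ and is $\delta$-dense in the ball of radius $\sqrt2(1-\delta)$. For success, every pair has $\max(\vx_i^{\T}\mOmega^{-1}\vx_i,\vx_j^{\T}\mOmega^{-1}\vx_j)\le2(1+\delta)\log n<\mathfrak d(\mOmega_n)$, so no sign can flip. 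For failure, take a near-minimizer $(\vx,\vy)$ of $\mathfrak d$ with cost below $2(1-2\delta)\log n$, perturbed slightly into the open infeasible region. Density supplies distinct sample points $i\in\mathcal G_k$ and $j\in\mathcal G_\ell$ near the rescaled targets, which forces a wrong sign. When $k=\ell$, the two points must be distinct, and density again provides them. A compactness argument over the normalized shapes $\mOmega_n/\|\mOmega_n\|$ makes these steps uniform in $n$.
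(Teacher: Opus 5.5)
Your deterministic part (monotonicity and homogeneity of $\mathfrak d$, continuity, the unique root, and the reduction of the $(1\pm\varepsilon)$ statements to the $\liminf/\limsup$ criteria) is correct and matches Lemma~\ref{lem:gauss-geometry}. The gap is in the probabilistic core.

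Given your expansion, the success half needs only marginal Gaussian tails and a union bound. The failure half needs more: near a minimizer of $\mathfrak d$, some observation of group $k$ and a distinct one of group $\ell$ must fall into small neighborhoods that a single observation hits with probability only about $n^{-1+\delta}$. That is a statement about the joint law of $(\vh_i)_{i\le n}$, and Lemma~\ref{lem:gauss-extreme-cloud} requires exactly i.i.d.\ Gaussians independent of the labels. A leave-one-out or leave-two-out construction makes each $\vh_i$ Gaussian only conditionally on its own $\sigma$-field, so it does not produce such a family. You name this cross-$i$ dependence as the main obstacle but leave it open.

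Theorem~\ref{thm:rowwise} cannot supply a priori control either. At the cutoff $L_K\asymp\sigma_n^2\{\log n+\sqrt{p\log n/n}\}$, so $\epsilon_n\asymp1$ and \eqref{eq:main-condition} fails.

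The missing idea is exact rotation invariance in the sample space. Orthogonal $\mR$ with $\mR\ve=\ve$ and $\mR\mV_\circ=\mV_\circ$ preserve the conditional law of the data. Hence, given the labels and $\mA_\circ$, the frame $\mH_\circ\mB_\circ^{-1}$ is uniform among orthonormal $r$-frames in $\Imv(\mQ)$. It can be realized as $\mQ\mZ_*(\mZ_*^{\T}\mQ\mZ_*)^{-1/2}$ with $\mZ_*$ i.i.d.\ Gaussian and independent of the labels and $\mA_\circ$ (Lemma~\ref{lem:gauss-Haar}). All dependence across rows is then carried by the $r\times r$ matrices $\mA_\circ,\mB_\circ$. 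The block equations identify these: $\mB_\circ^2=\mOmega_n(t_n)+o_p(v_n)$ via $\mC_\circ^{\T}\mC_\circ\approx n(nt_n\mD_0+p\mI_r)$, for any growth of $p$ and without spiked-covariance asymptotics in $\R^p$. Meanwhile the rows $\vz_i$ are exactly i.i.d., so the extreme-cloud lemma applies in both directions.

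The second gap is the weak-signal part of the failure regime. The hypothesis $\limsup_n\mathfrak d(\mOmega_n(t_n))/(2\log n)<1$ allows $I_n=t_n^2/(t_n+\gamma_n)$ to stay bounded, for instance $t_n\to0$. Your expansion is not available in that regime, and for $t_n\to0$ it is actually false. The approximation $\mB_\circ^2\approx\mOmega_n(t_n)$ is established only under $I_n\to\infty$ (Lemma~\ref{lem:gauss-residual}), whereas $\mB_\circ^2\preceq\mI_r$ at every signal level. By \eqref{eq:gauss-row-Haar}, the rows of $\sqrt n\,\widehat{\mV}$ therefore have norm at most $q_{\max}+(1+o_p(1))\sqrt{2\log n}$. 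Your expansion instead predicts extreme rows of size about $\sqrt{2\log n}\,\|\mOmega_n(t_n)\|_{\op}^{1/2}$, which is larger by an unbounded factor when $t_n\to0$. Compactness of the normalized shapes $\mOmega_n/\|\mOmega_n\|_{\op}$ controls the shape but not this regime.

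The paper covers it with arguments valid at every signal level. Lemma~\ref{lem:gauss-bounded-I} shows that $\|\mB_\circ\|_{\op}$ stays bounded below when $I_n$ is bounded. Lemma~\ref{lem:gauss-large-residual} then selects observations from two different groups with $\vz_i$ near $(M_0/b)\vu$, where $b=\|\mB_\circ\|_{\op}$ and $\vu$ is a top eigenvector of $\mB_\circ$. Their embedded rows point in nearly the same direction, so $\widehat P_{ij}>0$ across components whenever $b^2\log n\to_p\infty$.
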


This cutoff concerns the signal strength for exact recovery of all pairwise signs of the leading rank-$r$ projection of the unmodified Gram matrix, with the entrywise threshold fixed at zero. 
It is not a minimax threshold for clustering, and failure of exact sign recovery need not imply failure of label recovery by other procedures. 
The critical window $L_K/L_{\mathrm{crit},n}\to 1$ is not covered.

The two terms in \eqref{eq:gauss-Omega} arise from the signal--noise cross term and the quadratic noise term. 
The cutoff occurs when extreme embedded observations can first cross an inner-product sign boundary. The conditional Gaussian representation underlying this interpretation is established in Lemma~\ref{lem:gauss-Haar}.
\begin{corollary}[Explicit cutoff for an equally weighted regular simplex]
\label{cor:gaussian-simplex}
In addition to \eqref{eq:gauss-model}, suppose that $\pi_k=1/K$ and $\vm_k^{\T}\vm_\ell=K\mathbb I_{\{k=\ell\}}-1$.
Then $\mD_0=\mI_r$ and
\begin{equation*}
 L_{\mathrm{crit},n}
 =\frac{C_K\sigma_n^2\log n}{2}
       \left(1+\sqrt{1+\frac{4p}{C_Kn\log n}}\right),
 \qquad
 C_K=\begin{cases}
       2,&K=2,\\
       4\{K-1+\sqrt{K(K-2)}\},&K\ge3.
      \end{cases}
\end{equation*}
Equivalently, the critical signal strength satisfies
\begin{equation}
 \frac{L_{\mathrm{crit},n}^{\,2}}
      {\sigma_n^2L_{\mathrm{crit},n}+\sigma_n^4p/n}=C_K\log n.
 \label{eq:gauss-simplex-effective}
\end{equation}
\end{corollary}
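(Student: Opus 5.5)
The plan is to reduce the critical equation \eqref{eq:gauss-critical-equation} to a scalar quadratic in $t$, using the scaling of $\mathfrak d$ and one explicit computation of $\mathfrak d(\mI_r)$ from the simplex geometry; Theorem~\ref{thm:gaussian-cutoff} then gives the cutoff.

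\textbf{Reduction to $\mathfrak d(\mI_r)$.} The hypotheses give $\mD_0=\sum_k K^{-1}\vm_k\vm_k^{\T}$. The Gram matrix $\vm_k^{\T}\vm_\ell=K\mathbb I_{\{k=\ell\}}-1$ says that $\mD_0$ acts as the identity on the span of the $\vm_k$, which is $\R^r$. Hence $\mD_0=\mI_r$, the normalization $\lambda_{\min}(\mD_0)=1$ holds, and $\vq_k=\vm_k$. By \eqref{eq:gauss-Omega}, $\mOmega_n(t)=\omega(t)\mI_r$ with $\omega(t)=1/t+\gamma_n/t^2$. Substituting $\vx\mapsto\sqrt{\omega}\,\vx$ and $\vy\mapsto\sqrt{\omega}\,\vy$ in \eqref{eq:gauss-d-functional} does not give a clean scaling, because the constraint is not homogeneous. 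Instead, read the definition directly: $\mathfrak d(\omega\mI_r)=\omega^{-1}d_K$, where
\[
 d_K=\min_{k\le\ell}\,\inf\bigl\{\max(\|\vx\|_2^2,\|\vy\|_2^2):\ s_{k\ell}(\vq_k+\vx)^{\T}(\vq_\ell+\vy)\le0\bigr\}
\]
is exactly $\mathfrak d(\mI_r)$. The critical equation then becomes $(t+\gamma_n)/t^2=d_K/(2\log n)$, that is,
\[
 \frac{t^2}{t+\gamma_n}=\frac{2}{d_K}\log n .
\]
This is \eqref{eq:gauss-simplex-effective} after multiplying through by $\sigma_n^4$, provided $C_K=2/d_K$. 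Its unique positive root is
\[
 t=\frac{C_K\log n}{2}\Bigl(1+\sqrt{1+4\gamma_n/(C_K\log n)}\Bigr).
\]
Multiplying by $\sigma_n^2$ gives the displayed $L_{\mathrm{crit},n}$, and uniqueness agrees with Theorem~\ref{thm:gaussian-cutoff}.

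\textbf{Computing $d_K$.} This is the main step. Write $R^2=\|\vq_k\|_2^2=K-1$, and let $s^2$ be the common bound on the two squared radii.

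For $K=2$ we have $r=1$, $q_1=1$ and $q_2=-1$. In one dimension a sign change requires one point to cross $0$. For $k=\ell$ this means $|x|\ge1$ or $|y|\ge1$, and the same holds for $k\ne\ell$. Hence $d_2=1$ and $C_2=2$.

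For $K\ge3$ we have $r\ge2$. The sign of $\va^{\T}\vb$ depends only on the angle between $\va$ and $\vb$. For $s<R$, the directions of points in the ball $\{\vq_k+\vx:\|\vx\|_2\le s\}$ fill exactly a cone of half-angle $\alpha=\arcsin(s/R)$ about $\vq_k$. Taking $s\ge R$, so that a point can reach the origin, is never cheaper. Let $\theta$ be the angle between $\vq_k$ and $\vq_\ell$. Since the two cones lie in a common plane and $r\ge2$, the attainable angles between $\va$ and $\vb$ fill $[\max(\theta-2\alpha,0),\min(\theta+2\alpha,\pi)]$.

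For $k=\ell$ we have $\theta=0$, and the constraint requires $2\alpha\ge\pi/2$. This gives $s^2=R^2/2=(K-1)/2$.

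For $k\ne\ell$ we have $\cos\theta=-1/(K-1)$, and the constraint requires an angle of at most $\pi/2$, so $2\alpha\ge\theta-\pi/2$. This gives
\[
 s^2=R^2\sin^2\alpha=\frac{R^2(1-\sin\theta)}{2}=\frac{K-1-\sqrt{K(K-2)}}{2},
\]
using $\sin\theta=\sqrt{K(K-2)}/(K-1)$. This value is smaller than $(K-1)/2$.

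Since $(K-1)^2-K(K-2)=1$, we get
\[
 d_K=\frac{1}{2\{K-1+\sqrt{K(K-2)}\}},
\]
and therefore $C_K=2/d_K=4\{K-1+\sqrt{K(K-2)}\}$.

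\textbf{Expected obstacle.} The delicate point is justifying the cone description and its optimality. The infimum over the asymmetric pair $(\vx,\vy)$ with a max-cost is attained by perturbing each point tangentially with equal radii, so that each direction is rotated by $\alpha$ toward the other within $\Span\{\vq_k,\vq_\ell\}$. The argument is as follows. For fixed $s$, the set of achievable directions is exactly the cone described above. The sign condition is monotone in the angle. So the smallest feasible $s$ is the one at which the extreme attainable angle first reaches $\pi/2$. The infimum is attained by compactness.
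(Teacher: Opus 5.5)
Your proposal is correct and follows essentially the same route as the paper. Like the paper, you identify $\mD_0=\mI_r$ and $\mOmega_n(t)=v_n(t)\mI_r$, use $\mathfrak d(a\mW)=\mathfrak d(\mW)/a$ to reduce to $\mathfrak d(\mI_r)$, compute that value by the cone-of-directions argument (the angle bound $\angle(\va,\vb)\ge\theta-2\arcsin(s/\sqrt{r})$, with attainment in the plane of the two centers), and solve $t^2/(t+\gamma_n)=C_K\log n$. Your remark that the substitution "does not give a clean scaling" is unnecessary but harmless: $\mW$ enters only the objective, not the constraint set, which is exactly why the homogeneity holds.
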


For fixed $K$, the explicit cutoff is of order $\sigma_n^2\{\log n+\sqrt{p\log n/n}\}$.
Its leading term is $C_K\sigma_n^2\log n$ when $p=o(n\log n)$, and $\sigma_n^2\sqrt{C_Kp\log n/n}$ when $p/(n\log n)\to\infty$.
For three components, $C_3=8+4\sqrt3$.
For $K=2$ with equal mixing proportions, let $\Delta=\|\vmu_1-\vmu_2\|_2$.
Then $L_2=\Delta^2/4$, and \eqref{eq:gauss-simplex-effective} becomes $t_{\mathrm{crit},n}^{\,2}/(t_{\mathrm{crit},n}+\gamma_n)=2\log n$.
This agrees with the sharp recovery threshold of \citet{Ndaoud2022}.
For the hollowed Gram matrix, \citet[Theorem~3.2]{AbbeFanWang2022} already prove exact label recovery by the signs of its leading eigenvector when $t_n^2/(t_n+\gamma_n)$ exceeds $(2+\varepsilon)\log n$, without Lloyd refinement.
Thus the constant $C_2=2$ is not a new threshold for label recovery; our result treats the unmodified Gram projection and extends the analysis of exact sign recovery to multiple components.

For comparison with multicomponent label recovery, set $\Delta^2=\min_{k\ne\ell}\|\vmu_k-\vmu_\ell\|_2^2$.
\citet[Corollary~2.2 and Theorem~2.3]{ChenYang2021} identify the cutoff
\[
 \Delta_{\mathrm{CY},n}^2
 =4\sigma_n^2\left(1+\sqrt{1+\frac{Kp}{n\log n}}\right)\log n.
\]
Their SDP upper bound assumes equal cluster sizes, and their converse is minimax over an approximately balanced class under the stated growth conditions.
Our model instead has random counts and fixed $K$.
For the equally weighted regular simplex, $\Delta^2=2K L_K$, so our cutoff is $2K L_{\mathrm{crit},n}$ in units of squared distance.
The recovery criteria differ because correct labels do not require all signs of a given projection to be correct.
Therefore, Theorem~\ref{thm:gaussian-cutoff} does not give an impossibility result for other clustering procedures.

\section{Proofs for the sub-Gaussian model}
\label{sec:proofs}
Throughout this section, concentration bounds involving the noise are understood conditionally on the labels. 
Combining these conditional bounds with the unconditional concentration bounds for the component counts yields the stated probability bounds under the joint law.
Define
\begin{equation*}
 \widehat{\mA}=\mY^{\T}\mY-\tr(\mSigma)\mJ_n,
 \qquad
 \mM_{\mSigma}=\mG^{\T}\mG-\tr(\mSigma)\mI_n.
\end{equation*}
On $\ve^\perp$, the shift changes all eigenvalues equally and preserves eigenspaces. It is used only in the proofs. Expanding $\mY^{\T}\mY$ gives
\begin{equation}
 \begin{split}
 \widehat{\mA}&=\mA+\mE,\\
 \mE&=\mY_{\vmu}^{\T}\mG\mJ_n
       +\mJ_n\mG^{\T}\mY_{\vmu}+\mJ_n\mM_{\mSigma}\mJ_n.
 \end{split}
 \label{eq:gram-decomposition}
\end{equation}
In particular, $\E[\mY^{\T}\mY\mid Z_1,\ldots,Z_n]=\mA+\tr(\mSigma)\mJ_n$.
Since the columns of $\mG\mJ_n$ are dependent, we apply independent-coordinate concentration bounds to $\mG$, not $\mG\mJ_n$.

\subsection{Population signal geometry}
\label{sec:population}
\begin{lemma}[Concentration of the component counts]
\label{lem:count-concentration}
For every fixed $B>0$, there is $C_B>0$ such that
\[
        \mathcal E_B^{\mathrm{cnt}}
        :=\left\{
        \max_{1\le k\le K}|\widehat\pi_k-\pi_k|
        \le C_B\sqrt{\frac{\log n}{n}}
        \right\}
\]
satisfies
\[
        \Pbb\bigl((\mathcal E_B^{\mathrm{cnt}})^c\bigr)
        \le Cn^{-B}.
\]
For all sufficiently large $n$, on $\mathcal E_B^{\mathrm{cnt}}$,
\[
        \frac{\pi_{\min}}2\le\widehat\pi_k\le1,
        \qquad k=1,\ldots,K.
\]
\end{lemma}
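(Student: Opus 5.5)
Since $n_k=\sum_{i=1}^n\mathbb I_{\{Z_i=k\}}$ is a sum of $n$ independent Bernoulli$(\pi_k)$ variables, the statement follows from Hoeffding's inequality and a union bound over the $K$ components. The threshold constant $C_B$ is chosen so that each tail is polynomially small of order $n^{-B}$.

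First, for each fixed $k$ and $t>0$, Hoeffding's inequality for bounded summands in $[0,1]$ gives
\[
 \Pbb\bigl(|\widehat\pi_k-\pi_k|>t\bigr)\le 2\exp(-2nt^2).
\]
Second, set $t=C_B\sqrt{\log n/n}$ with $C_B=\sqrt{B/2}$. Then $2nt^2=2C_B^2\log n=B\log n$, so each tail is at most $2n^{-B}$. A union bound over $k\in\{1,\ldots,K\}$ yields $\Pbb\bigl((\mathcal E_B^{\mathrm{cnt}})^c\bigr)\le 2Kn^{-B}$, and we may take $C=2K$, which depends only on $K$ as permitted.

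Third, the upper bound $\widehat\pi_k\le1$ is trivial since $n_k\le n$. For the lower bound, note that $C_B\sqrt{\log n/n}\to0$, so for all $n$ large enough that $C_B\sqrt{\log n/n}\le\pi_{\min}/2$ we have, on $\mathcal E_B^{\mathrm{cnt}}$,
\[
 \widehat\pi_k\ge\pi_k-\frac{\pi_{\min}}2\ge\frac{\pi_{\min}}2,
 \qquad k=1,\ldots,K.
\]
This threshold on $n$ depends only on $B$ and $\pi_{\min}$, both of which are fixed.

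There is no real obstacle here. The only point needing care is that $C_B$ and the "sufficiently large $n$" threshold depend only on $B$, $K$, and the fixed mixing proportions, consistent with the constant conventions of Section~\ref{sec:main}. One could alternatively invoke Bernstein's inequality, but Hoeffding already suffices.
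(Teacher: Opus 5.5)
Your proof is correct and follows the paper's argument: a binomial tail bound at $t=C_B\sqrt{\log n/n}$, a union bound over the $K$ components, and $t\le\pi_{\min}/2$ for large $n$ to get the lower bound on $\widehat\pi_k$. The paper uses Bernstein's inequality rather than Hoeffding's, which makes no difference here, and your Hoeffding version gives the explicit choice $C_B=\sqrt{B/2}$ with a tail bound valid for every $n$.
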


\begin{proof}
For each $k$, $n_k\sim\operatorname{Bin}(n,\pi_k)$, so Bernstein's inequality gives
\[
 \Pbb(|\widehat\pi_k-\pi_k|>t)
 \le2\exp\left\{-\frac{nt^2}{2\pi_k(1-\pi_k)+2t/3}\right\}.
\]
Take $t=C_B\sqrt{\log n/n}$. A union bound over the fixed $K$ components gives the probability bound for sufficiently large $C_B$. The count bounds follow since $t\le\pi_{\min}/2$ for large $n$.
\end{proof}

To verify the equivalence of \eqref{eq:full-signal-rank} and affine independence, let $\one_K^{\T}\vb=0$. Then $\sum_k b_k\widetilde{\vmu}_k=\mM_{\vmu}\vb$, which vanishes only for $\vb=\bm0$ by \eqref{eq:mean-kernel} and $\one_K^{\T}\vpi=1$. 
Conversely, suppose that $\va\in\Ker(\mM_{\vmu})$ is not proportional to
$\vpi$, and define
\[
\vb=\va-(\one_K^{\T}\va)\vpi.
\]
Since $\one_K^{\T}\vpi=1$ and $\mM_{\vmu}\vpi=\bm{0}$, we have
\[
\one_K^{\T}\vb=0,
\qquad
\mM_{\vmu}\vb=\bm{0}.
\]
Moreover, $\vb\neq\bm{0}$; otherwise,
$\va=(\one_K^{\T}\va)\vpi$, contradicting the assumption that
$\va$ is not proportional to $\vpi$.
Thus $\vb$ gives a nontrivial affine relation among the component means.

\begin{lemma}[The nonzero signal eigenvalues]
\label{lem:signal-eigenvalues}
On $\mathcal E_B^{\mathrm{cnt}}$, set $\tau_n=\max_{1\le k\le K}|\widehat\pi_k/\pi_k-1|$.
For every $1\le j\le r$,
\begin{equation*}
        (1-\tau_n)n\lambda_j(\mB_{\vmu})
        \le\lambda_j(\mA)
        \le(1+\tau_n)n\lambda_j(\mB_{\vmu}).
\end{equation*}
In particular, $\lambda_r\asymp n\LK$.
\end{lemma}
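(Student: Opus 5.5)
The idea is to write $\mA$ as a Gram matrix that factors through a $K\times K$ matrix, and then compare that matrix with $n\mB_{\vmu}$ by a congruence argument. First I rewrite the oracle signal. Let $\mH\in\{0,1\}^{n\times K}$ be the membership matrix with $H_{ik}=\mathbb I_{\{Z_i=k\}}$. Then $(\vmu_{Z_1},\ldots,\vmu_{Z_n})=\mM_{\vmu}\mH^{\T}$, so $\mY_{\vmu}=\mM_{\vmu}\mH^{\T}\mJ_n$ and
\[
 \mA=\mJ_n\mH\mM_{\vmu}^{\T}\mM_{\vmu}\mH^{\T}\mJ_n .
\]
The nonzero eigenvalues of $\mA=\mY_{\vmu}^{\T}\mY_{\vmu}$ coincide with those of
\[
 \mY_{\vmu}\mY_{\vmu}^{\T}=\mM_{\vmu}\bigl(\mH^{\T}\mJ_n\mH\bigr)\mM_{\vmu}^{\T}.
\]
A direct computation gives $\mH^{\T}\mJ_n\mH=\mD_n-n^{-1}\vm\vm^{\T}$, where $\mD_n=\diag(n_1,\ldots,n_K)$ and $\vm=(n_1,\ldots,n_K)^{\T}$. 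Writing $\widehat{\vpi}=(\widehat\pi_1,\ldots,\widehat\pi_K)^{\T}$, this becomes
\[
 \mH^{\T}\mJ_n\mH=n\bigl(\widehat{\mPi}-\widehat{\vpi}\widehat{\vpi}^{\T}\bigr),\qquad \widehat{\mPi}=\diag(\widehat{\vpi}).
\]
For comparison, $n\mB_{\vmu}=n\mM_{\vmu}\mPi\mM_{\vmu}^{\T}$ with $\mPi=\diag(\vpi)$.

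The key step uses $\mM_{\vmu}\vpi=\bm0$ to replace the rank-one term. Since $\mM_{\vmu}\widehat{\vpi}=\mM_{\vmu}(\widehat{\vpi}-\vpi)$, I would expand $\mM_{\vmu}\widehat{\mPi}\mM_{\vmu}^{\T}-\mM_{\vmu}\widehat{\vpi}\widehat{\vpi}^{\T}\mM_{\vmu}^{\T}$ and compare it with $\mM_{\vmu}\mPi\mM_{\vmu}^{\T}$. The diagonal part is easy: $\widehat{\mPi}=\mPi^{1/2}(\mI+\mDelta)\mPi^{1/2}$ with $\|\mDelta\|_{\op}\le\tau_n$. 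This gives
\[
 (1-\tau_n)\mM_{\vmu}\mPi\mM_{\vmu}^{\T}\preceq\mM_{\vmu}\widehat{\mPi}\mM_{\vmu}^{\T}\preceq(1+\tau_n)\mM_{\vmu}\mPi\mM_{\vmu}^{\T},
\]
and Weyl/Courant--Fischer monotonicity then yields the eigenvalue sandwich.

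The rank-one term $\mM_{\vmu}\widehat{\vpi}\widehat{\vpi}^{\T}\mM_{\vmu}^{\T}\succeq\bm0$ is subtracted, so the upper bound is immediate. The lower bound is where I expect the main obstacle, because this term must be absorbed without any extra loss. I would handle it exactly rather than with a crude bound. Note that $\widehat{\mPi}-\widehat{\vpi}\widehat{\vpi}^{\T}$ is the covariance of the empirical label distribution, which satisfies $\widehat{\mPi}-\widehat{\vpi}\widehat{\vpi}^{\T}=\min_{\vc}\sum_k\widehat\pi_k(\ve_k-\vc)(\ve_k-\vc)^{\T}$, the minimum being attained at $\vc=\widehat{\vpi}$. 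In quadratic-form terms, for any $\vu\in\R^p$ put $w_k=\vu^{\T}\vmu_k$. Then
\[
 \vu^{\T}\mY_{\vmu}\mY_{\vmu}^{\T}\vu/n=\sum_k\widehat\pi_k(w_k-\bar w)^2,\qquad \bar w=\sum_k\widehat\pi_kw_k .
\]
Since $\bar w$ minimizes $c\mapsto\sum_k\widehat\pi_k(w_k-c)^2$, comparing with weights $\pi_k$ uses $\sum_k\pi_kw_k=0$ in the form
\[
 \sum_k\widehat\pi_k(w_k-\bar w)^2\ge(1-\tau_n)\sum_k\pi_k(w_k-\bar w)^2=(1-\tau_n)\Bigl(\sum_k\pi_kw_k^2+\bar w^2\Bigr)\ge(1-\tau_n)\sum_k\pi_kw_k^2 .
\]
Thus $(1-\tau_n)\,n\mB_{\vmu}\preceq\mY_{\vmu}\mY_{\vmu}^{\T}\preceq(1+\tau_n)\,n\mB_{\vmu}$ in Loewner order on $\R^p$. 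Courant--Fischer then gives the stated inequalities for all $j\le r$, since the nonzero eigenvalues of $\mY_{\vmu}\mY_{\vmu}^{\T}$ and $\mA$ coincide.

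Finally, Lemma~\ref{lem:count-concentration} gives $\tau_n\le C_B\sqrt{\log n/n}/\pi_{\min}\to0$ on $\mathcal E_B^{\mathrm{cnt}}$. Hence $\tau_n\le1/2$ for large $n$, and therefore $\lambda_r\asymp n\lambda_r(\mB_{\vmu})=n\LK$.
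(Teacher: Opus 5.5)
Your argument is correct and is essentially the paper's proof. The paper likewise writes $\vu^{\T}(n^{-1}\mY_{\vmu}\mY_{\vmu}^{\T})\vu=\min_{b}\sum_k\widehat\pi_k(\vu^{\T}\vmu_k-b)^2$ and $\vu^{\T}\mB_{\vmu}\vu=\min_b\sum_k\pi_k(\vu^{\T}\vmu_k-b)^2$. It compares the weights termwise before minimizing to get $(1-\tau_n)\mB_{\vmu}\preceq n^{-1}\mY_{\vmu}\mY_{\vmu}^{\T}\preceq(1+\tau_n)\mB_{\vmu}$, and then concludes by Courant--Fischer and the common nonzero spectrum of $\mY_{\vmu}\mY_{\vmu}^{\T}$ and $\mA$. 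Your explicit choices $c=\bar w$ for the lower bound and $c=0$ for the upper bound (dropping the rank-one term) are just an unpacked form of that comparison.

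One small detail: the last step of your lower-bound chain uses $1-\tau_n\ge0$. This holds on $\mathcal E_B^{\mathrm{cnt}}$ for large $n$, and otherwise the inequality is trivial because the left side is nonnegative.
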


\begin{proof}
Put
\[
 \widehat{\mB}_{\mathrm c}
 =\frac1n\mY_{\vmu}\mY_{\vmu}^{\T}
 =\sum_k\widehat\pi_k(\vmu_k-\overline{\vmu}_Z)
                         (\vmu_k-\overline{\vmu}_Z)^{\T}.
\]
For every $\vu\in\R^p$,
\[
 \vu^{\T}\widehat{\mB}_{\mathrm c}\vu
   =\min_{b\in\R}\sum_k\widehat\pi_k(\vu^{\T}\vmu_k-b)^2,
 \qquad
 \vu^{\T}\mB_{\vmu}\vu
   =\min_{b\in\R}\sum_k\pi_k(\vu^{\T}\vmu_k-b)^2.
\]
Comparing the weights before taking the minima gives
\[
 (1-\tau_n)\mB_{\vmu}\preceq\widehat{\mB}_{\mathrm c}
                         \preceq(1+\tau_n)\mB_{\vmu}.
\]
On the count event, $\tau_n\le C_B\pi_{\min}^{-1}\sqrt{\log n/n}=o(1)$. Courant--Fischer and the common positive eigenvalues of $\mY_{\vmu}^{\T}\mY_{\vmu}$ and $\mY_{\vmu}\mY_{\vmu}^{\T}$ give the bounds. Taking $j=r$ yields $\lambda_r\asymp n\LK$.
\end{proof}

For calculations with individual eigenvalues, choose the columns of $\mV$ as orthonormal eigenvectors of $\mA$ corresponding to its positive eigenvalues, and write
\[
        \mA\mV=\mV\mLambda,
        \qquad
        \mLambda=\diag(\lambda_1,\ldots,\lambda_r),
        \qquad
        \mP=\mV\mV^{\T},
        \qquad
        \mQ=\mJ_n-\mP.
\]

\begin{proof}[Proof of Proposition~\ref{prop:population-structure}]
Assume all groups are nonempty. Let $\mH\in\R^{n\times K}$ have entries $H_{ik}=\mathbb I_{\{Z_i=k\}}$, and put
\[
 \mN=\mH^{\T}\mH=\diag(n_1,\ldots,n_K),\qquad
 \mW=\mH\mN^{-1/2},\qquad
 \mPi=\mW\mW^{\T}.
\]
Then $\mY_{\vmu}=\mM_{\vmu}\mH^{\T}\mJ_n$ and
$\Imv(\mY_{\vmu}^{\T})\subseteq\Imv(\mH)\cap\ve^\perp$.
The map $\mH^{\T}\mJ_n$ has range $\one_K^\perp$ and rank $K-1$.
By \eqref{eq:mean-kernel} and $\one_K^{\T}\vpi=1$, $\mM_{\vmu}$ is injective on $\one_K^\perp$.
Thus $\mA$ has rank $r$ and
\[
 \Imv(\mA)=\Imv(\mH)\cap\ve^\perp,\qquad
 \mP=\mPi-\ve\ve^{\T},\qquad
 \mQ=\mI_n-\mPi.
\]
The formula follows entrywise. For $k\ne\ell$, $P_{ij}=-1/n$. For $k=\ell$, $P_{ij}=(1-\widehat\pi_k)/(n\widehat\pi_k)>0$ since $K\ge2$ and all groups are nonempty. On the count event, $\widehat\pi_k\ge\pi_{\min}/2$ and $1-\widehat\pi_k=\sum_{\ell\ne k}\widehat\pi_\ell\ge(K-1)\pi_{\min}/2$, giving the margin. The row bound follows from $P_{ii}=\|V_{i*}\|_2^2$, and $\lambda_r\asymp n\LK$ follows from Lemma~\ref{lem:signal-eigenvalues}. Lemma~\ref{lem:count-concentration} gives probability $1-O(n^{-B})$.
\end{proof}

Throughout Section~\ref{sec:proofs}, $\mW$ and $\mPi$ satisfy $\mPi=\mP+\ve\ve^{\T}$ and $\|\mW\|_{2\to\infty}=\max_k n_k^{-1/2}\le Cn^{-1/2}$ on the count event.

\begin{proof}[Proof of Corollary~\ref{cor:three-angle}]
Here $K=3$ and $r=2$.
Put $\mU_0=(\vmu_1,\vmu_2)\in\R^{p\times2}$.
By centering, $\vmu_3=-(\pi_1/\pi_3)\vmu_1-(\pi_2/\pi_3)\vmu_2$, so
\[
        \mB_{\vmu}=\mU_0\mT_{\pi}\mU_0^{\T},
        \qquad
        \mT_{\pi}=
        \begin{pmatrix}
        \pi_1+\pi_1^2/\pi_3&\pi_1\pi_2/\pi_3\\
        \pi_1\pi_2/\pi_3&\pi_2+\pi_2^2/\pi_3
        \end{pmatrix}.
\]
For $\vz=(a,b)^{\T}$,
\[
        \vz^{\T}\mT_{\pi}\vz
        =\pi_1a^2+\pi_2b^2
          +\frac{(\pi_1a+\pi_2b)^2}{\pi_3}.
\]
Thus $c_{\pi}\mI_2\preceq\mT_{\pi}\preceq C_{\pi}\mI_2$. Multiplying by $\mU_0$ and $\mU_0^{\T}$ and applying Courant--Fischer gives
\[
        L_3\asymp\lambda_{\min}(\mU_0^{\T}\mU_0).
\]
Put $\mT=\mU_0^{\T}\mU_0$ and $t_k=\|\vmu_k\|_2^2$ for $k=1,2$.
Since
\[
 \det(\mT)=t_1t_2\sin^2\theta,
 \qquad
 \max(t_1,t_2)\le\lambda_{\max}(\mT)\le t_1+t_2,
\]
we have
\[
 \frac{t_1t_2\sin^2\theta}{t_1+t_2}
 \le\lambda_{\min}(\mT)
     =\frac{\det(\mT)}{\lambda_{\max}(\mT)}
 \le\min(t_1,t_2)\sin^2\theta.
\]
Hence $\tfrac12L_{\mathrm{ang}}\le\lambda_{\min}(\mT) \le L_{\mathrm{ang}}$, so $L_3\asymp L_{\mathrm{ang}}$. The stated condition implies \eqref{eq:main-condition}, and Theorems~\ref{thm:rowwise} and~\ref{thm:sign-recovery} apply.
\end{proof}

\begin{proposition}[Necessity of full signal rank for strict negativity between components]
\label{prop:rank-necessity}
Suppose all $K\ge2$ groups are nonempty, $\pi_k>0$, and $\mM_{\vmu}\vpi=\bm0$, but do not assume \eqref{eq:full-signal-rank}.
Let $\mP_{\mathrm{sig}}$ be the orthogonal projection onto the entire positive eigenspace of $\mY_{\vmu}^{\T}\mY_{\vmu}$.
If $(P_{\mathrm{sig}})_{ij}<0$ for every pair from distinct groups, then $\rank(\mM_{\vmu})=K-1$.
\end{proposition}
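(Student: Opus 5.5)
We argue by contraposition and reuse the factorization from the proof of Proposition~\ref{prop:population-structure}. Keep $\mH$, $\mN$, $\mW$, $\mPi=\mW\mW^{\T}$ as there, so that $\mY_{\vmu}=\mM_{\vmu}\mH^{\T}\mJ_n$. Suppose $\rank(\mM_{\vmu})=s<K-1$. Without full signal rank, $\mM_{\vmu}$ is no longer injective on $\one_K^\perp$. We first identify the signal space. Since $\Imv(\mY_{\vmu}^{\T})=\mJ_n\mH\,\Imv(\mM_{\vmu}^{\T})$, every vector of the signal space has the form $\mJ_n\mH\va$ with $\va\in\Imv(\mM_{\vmu}^{\T})$. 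Because $\mM_{\vmu}\vpi=\bm0$, we have $\va\perp\vpi$. Such a vector is constant on each group, with group values $a_k-c$, where $c=\sum_k\widehat\pi_k a_k$. The space of these vectors is therefore $\mH\mathcal S$ for a subspace $\mathcal S\subseteq\R^K$ of dimension at most $s\le K-2$, lying inside the $(K-1)$-dimensional space $\{\vb:\sum_k n_kb_k=0\}$.

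Next we write $\mP_{\mathrm{sig}}$ explicitly. Since $\mP_{\mathrm{sig}}$ projects onto $\mH\mathcal S$, we have $(P_{\mathrm{sig}})_{ij}=F_{Z_iZ_j}$, where $\mF=\mN^{-1/2}\mT\mN^{-1/2}$ and $\mT$ is the orthogonal projection in $\R^K$ onto $\mN^{1/2}\mathcal S$. The hypothesis then says that $F_{k\ell}<0$, equivalently $T_{k\ell}<0$, for all $k\ne\ell$. Note that $\mT$ annihilates $\vw:=\mN^{1/2}\one_K$, whose entries are all positive, because $\mN^{1/2}\mathcal S\perp\vw$.

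The key step is a Perron--Frobenius-type argument. The matrix $\mT$ is symmetric positive semidefinite with strictly negative off-diagonal entries. Hence $\mT$ is an irreducible (singular) M-matrix-like object. Take $\alpha>0$ large and set $\mC=\alpha\mI-\mT$. Then $\mC$ is symmetric with strictly positive entries, and its largest eigenvalue is $\alpha-\lambda_{\min}(\mT)=\alpha$. By Perron--Frobenius, this eigenvalue is simple, and its eigenvector can be taken with positive entries; $\vw$ is such an eigenvector. So $\Ker(\mT)$ is one-dimensional, spanned by $\vw$. Consequently $\rank(\mT)=K-1$, and hence $\dim\mathcal S=K-1$. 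But we showed $\dim\mathcal S\le s\le K-2$, a contradiction, so $\rank(\mM_{\vmu})=K-1$.

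We also need to rule out $s=0$. In that case $\mP_{\mathrm{sig}}=\bm0$, so no off-diagonal entry is negative, and the hypothesis already fails. Since $K\ge2$, pairs from distinct groups exist.

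The main obstacle is the bookkeeping in identifying $\Imv(\mY_{\vmu}^{\T})$ with $\mH\mathcal S$ and checking that $\dim\mathcal S=\rank(\mY_{\vmu})\le\rank(\mM_{\vmu})$. The Perron--Frobenius step is standard once $\mT$ is set up.
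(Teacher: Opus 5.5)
Your proof is correct. The reduction is the same as in the paper. Both arguments compress $\mP_{\mathrm{sig}}$ through the isometry $\mH\mN^{-1/2}$ to a $K\times K$ orthogonal projection (your $\mT$, the paper's $\mJ$). This projection has rank at most the rank of $\mM_{\vmu}$, strictly negative off-diagonal entries, and it annihilates the positive vector $\mN^{1/2}\one_K$.

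The only real difference is the final linear-algebra step showing that this projection has a one-dimensional kernel.

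\begin{itemize}
\item The paper rescales by the positive null vector, setting $\mT_a=\mD_a\mJ\mD_a$. This is a weighted graph Laplacian with zero row sums, so $\vx^{\T}\mT_a\vx=\sum_{k<\ell}w_{k\ell}(x_k-x_\ell)^2$ vanishes only on constant vectors. The argument is self-contained and elementary, and it uses the positivity of the null vector essentially to obtain zero row sums.
\item You apply Perron's theorem to the entrywise positive matrix $\alpha\mI-\mT$; any $\alpha>1$ works since $0\le T_{kk}\le1$. Because $\mT\succeq\bm0$ is singular, its kernel is exactly the eigenspace of the Perron root $\alpha$. This imports an external theorem, but it needs only singularity of $\mT$, not positivity of the null vector.
\end{itemize}

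Two minor remarks. First, your contrapositive tacitly uses $\rank(\mM_{\vmu})\le K-1$. This follows from $\mM_{\vmu}\vpi=\bm0$ with $\vpi\ne\bm0$ and is worth stating. Second, the separate treatment of $s=0$ is redundant. The Perron--Frobenius step already gives $\dim\mathcal S=K-1$, which contradicts $\dim\mathcal S\le s$ for every $s\le K-2$, including $s=0$.
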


\begin{proof}
With $\mH$ and $\mN$ as in Proposition~\ref{prop:population-structure}, set $\mS=\mH\mN^{-1/2}$. Its columns are orthonormal and the signal space lies in $\Imv(\mS)$, so
\[
        \mP_{\mathrm{sig}}=\mS\mJ\mS^{\T},
        \qquad
        \mJ=\mS^{\T}\mP_{\mathrm{sig}}\mS
\]
where $\mJ$ is a $K\times K$ orthogonal projection of rank $\rank(\mM_{\vmu})$. Indeed, $\mH^{\T}\mJ_n$ maps onto $\one_K^\perp$, and $\mM_{\vmu}(\one_K^\perp)=\Imv(\mM_{\vmu})$ since $\mM_{\vmu}\vpi=\bm0$ and $\one_K^{\T}\vpi=1$.
The vector $\va=\mN^{1/2}\one_K/\sqrt n$ has strictly positive entries and $\mS\va=\ve$.
Since $\mY_{\vmu}\ve=\bm0$, we have $\mJ\va=\bm0$.
For $i\in\mathcal G_k$, $j\in\mathcal G_\ell$,
\[
        (P_{\mathrm{sig}})_{ij}
        =\frac{J_{k\ell}}{\sqrt{n_kn_\ell}}.
\]
Thus the hypothesis implies $J_{k\ell}<0$ whenever $k\ne\ell$.
Let $\mD_a=\diag(a_1,\ldots,a_K)$ and $\mT_a=\mD_a\mJ\mD_a$.
Then $\mT_a\one_K=\bm0$ and all its off-diagonal entries are negative.
With $w_{k\ell}=-(T_a)_{k\ell}>0$, symmetry and zero row sums give
\[
        \vx^{\T}\mT_a\vx
        =\sum_{k<\ell}w_{k\ell}(x_k-x_\ell)^2.
\]
The quadratic form vanishes exactly on constant vectors, so $\Ker(\mT_a)=\Span\{\one_K\}$ and $\rank(\mT_a)=K-1$. Invertibility of $\mD_a$ gives $\rank(\mM_{\vmu})=\rank(\mJ)=\rank(\mT_a)=K-1$.
\end{proof}

\begin{remark}[Distinct means are not enough]
For $K=4$ and equal mixing proportions, take $\vmu_1=(1,0,0)^{\T}$, $\vmu_2=(-1,0,0)^{\T}$, $\vmu_3=(0,1,0)^{\T}$, and $\vmu_4=(0,-1,0)^{\T}$.
Here $p=3=K-1$, but the distinct means have rank $2<3$. If each group contains $m$ observations, the nonzero rows of $\mY_{\vmu}$ are orthogonal with squared norm $2m$. Thus
\[
        (P_{\mathrm{sig}})_{ij}
        =\frac{\vmu_k^{\T}\vmu_\ell}{2m},
        \qquad i\in\mathcal G_k,\ j\in\mathcal G_\ell.
\]
The entry between groups 1 and 3 is zero. A third leading eigenvector corresponds to a zero eigenvalue and is not determined by the signal. Proposition~\ref{prop:rank-necessity} concerns this projection's strict sign pattern, not impossibility for other methods.
\end{remark}

\subsection{Concentration bounds}
\label{sec:concentration}

\begin{lemma}[Linear and quadratic concentration]
\label{lem:subg-tools}
Under \eqref{eq:noise-model}--\eqref{eq:subgaussian-assumption}, define
\[
\vXi_m=(\vxi_1^{\T},\ldots,\vxi_m^{\T})^{\T}\in\R^{mp}.
\]
\begin{enumerate}[label=\textup{(\roman*)},leftmargin=3em]
\item For every deterministic vector \(\vh\in\R^{mp}\),
\[
        \|\ip{\vh}{\vXi_m}\|_{\psi_2}
        \le C_{\kappa}\|\vh\|_2,
\]
and hence, for every \(t>0\),
\[
        \Pbb\{|\ip{\vh}{\vXi_m}|>t\}
        \le2\exp\left(-c_{\kappa}\frac{t^2}{\|\vh\|_2^2}\right),
\]
with the usual convention when \(\vh=\bm0\).
\item For every symmetric matrix \(\mB\in\R^{mp\times mp}\) and every \(s\ge1\),
\[
\Pbb\left(
\left|\vXi_m^{\T}\mB\vXi_m-
\E[\vXi_m^{\T}\mB\vXi_m]\right|
>C_{\kappa}\{\|\mB\|_F\sqrt s+\|\mB\|_{\op}s\}
\right)
\le2e^{-c_{\kappa}s}.
\]
\end{enumerate}
\end{lemma}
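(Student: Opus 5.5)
The plan is to reduce both parts to standard facts about sums of independent sub-Gaussian variables: the Hoeffding-type bound for (i), and the Hanson--Wright inequality for (ii). All the coordinates $\xi_{ij}$ of $\vXi_m$ are independent, mean zero, and satisfy $\|\xi_{ij}\|_{\psi_2}\le\kappa$ by \eqref{eq:subgaussian-assumption}. So $\vXi_m$ is an $mp$-dimensional vector with independent centered sub-Gaussian coordinates of uniformly bounded $\psi_2$-norm. The dependence structure produced by $\mSigma^{1/2}$ plays no role here, since the lemma is stated directly for $\vXi_m$. Later applications will pass $\mSigma^{1/2}$, the centering matrix, and the labels into the deterministic $\vh$ and $\mB$, conditionally on the labels.

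For (i), write $\ip{\vh}{\vXi_m}=\sum_{i,j}h_{ij}\xi_{ij}$, a sum of independent mean-zero sub-Gaussian terms. I would invoke the general Hoeffding inequality in $\psi_2$ form (e.g.\ Vershynin 2018, Prop.~2.6.1):
\[
\Bigl\|\sum_{i,j} h_{ij}\xi_{ij}\Bigr\|_{\psi_2}^2\le C\sum_{i,j}h_{ij}^2\|\xi_{ij}\|_{\psi_2}^2\le C\kappa^2\|\vh\|_2^2 .
\]
This gives the $\psi_2$ bound with $C_\kappa=\sqrt C\kappa$. The tail bound then follows from the defining property $\Pbb(|X|>t)\le 2\exp(-t^2/\|X\|_{\psi_2}^2)$, which is Markov's inequality applied to $\exp(X^2/\|X\|_{\psi_2}^2)$. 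The case $\vh=\bm0$ is trivial.

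For (ii), I would apply the Hanson--Wright inequality (Rudelson--Vershynin; Vershynin 2018, Thm.~6.2.1) to the vector $\vXi_m$ and the deterministic matrix $\mB$:
\[
\Pbb\bigl(|\vXi_m^{\T}\mB\vXi_m-\E\vXi_m^{\T}\mB\vXi_m|>u\bigr)\le2\exp\Bigl[-c\min\Bigl(\frac{u^2}{\kappa^4\|\mB\|_F^2},\frac{u}{\kappa^2\|\mB\|_{\op}}\Bigr)\Bigr].
\]
Set $u=C_\kappa\{\|\mB\|_F\sqrt s+\|\mB\|_{\op}s\}$ with $C_\kappa=C'\kappa^2$. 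Then $u^2/(\kappa^4\|\mB\|_F^2)\ge C'^2 s$ and $u/(\kappa^2\|\mB\|_{\op})\ge C' s$. So the minimum is at least $C's$ once $C'\ge1$, and the bound becomes $2e^{-cC's}$, which is of the claimed form $2e^{-c_\kappa s}$. The degenerate cases $\|\mB\|_F=0$ or $\|\mB\|_{\op}=0$ give $\mB=\bm0$, where the statement is trivial. The restriction $s\ge1$ is not actually needed for this argument, but it is harmless.

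I see no real obstacle here, since both parts are direct applications of standard inequalities. The only bookkeeping point is that the constants depend only on $\kappa$, and not on $m$, $p$ or the nonidentical coordinate laws. This holds because Hoeffding and Hanson--Wright require only independence, centering, and a uniform $\psi_2$ bound. The unit-variance assumption is not used.
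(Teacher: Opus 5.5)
Your proposal is correct and follows the paper's proof: part (i) comes from the general Hoeffding inequality in $\psi_2$ form \citep[Proposition~2.6.1]{Vershynin2018}, and part (ii) from Hanson--Wright \citep[Theorem~1.1]{RudelsonVershynin2013} at the threshold $C_\kappa\{\|\mB\|_F\sqrt s+\|\mB\|_{\op}s\}$. Your constant bookkeeping with $C_\kappa=C'\kappa^2$, $C'\ge1$, which turns the minimum in Hanson--Wright into $2e^{-cC's}$, supplies the details the paper leaves implicit.
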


\begin{proof}
Part (i) follows from \citet[Proposition~2.6.1]{Vershynin2018}. Part (ii) follows from the Hanson--Wright inequality \citep[Theorem~1.1]{RudelsonVershynin2013} with threshold $C_{\kappa}\{\|\mB\|_F\sqrt s+\|\mB\|_{\op}s\}$; see also \citet[Theorem~6.2.1]{Vershynin2018}.
\end{proof}

\begin{lemma}[Net reconstruction]
\label{lem:net-reconstruction}
Let \(\cN\) be an \(\varepsilon\)-net of \(S^{d-1}\) in Euclidean norm, with \(0<\varepsilon<1/2\).
\begin{enumerate}[label=\textup{(\roman*)},leftmargin=3em]
\item If \(\mH\in\R^{d\times d}\) is symmetric, then
\[
        \|\mH\|_{\op}
        \le\frac{1}{1-2\varepsilon}
        \max_{\vu\in\cN}|\vu^{\T}\mH\vu|.
\]
\item For every \(\mB\in\R^{d_1\times d_2}\), if \(\cN_1\) and \(\cN_2\) are \(\varepsilon\)-nets of the corresponding unit spheres, then
\[
        \|\mB\|_{\op}
        \le\frac{1}{1-2\varepsilon}
        \max_{\vu\in\cN_1,\,\vv\in\cN_2}|\vu^{\T}\mB\vv|.
\]
Moreover, a \(1/4\)-net of \(S^{d-1}\) can be chosen with at most \(9^d\) points.
\end{enumerate}
\end{lemma}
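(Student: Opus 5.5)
The plan is the standard net argument, done first for a single unit vector and then for pairs. For the symmetric case (i), I would pick a maximizing unit vector $\vx$ with $|\vx^{\T}\mH\vx|=\|\mH\|_{\op}$; it exists because $\mH$ is symmetric, so $\|\mH\|_{\op}=\max_{\|\vx\|_2=1}|\vx^{\T}\mH\vx|$, attained at an eigenvector. Then I choose $\vu\in\cN$ with $\|\vx-\vu\|_2\le\varepsilon$ and write
\[
 \vx^{\T}\mH\vx-\vu^{\T}\mH\vu=(\vx-\vu)^{\T}\mH\vx+\vu^{\T}\mH(\vx-\vu).
\]
Each term is bounded by $\varepsilon\|\mH\|_{\op}$ because $\|\vu\|_2=\|\vx\|_2=1$. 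This gives
\[
 \|\mH\|_{\op}\le\max_{\vu\in\cN}|\vu^{\T}\mH\vu|+2\varepsilon\|\mH\|_{\op},
\]
and rearranging, which uses $\varepsilon<1/2$, yields (i).

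For (ii) I would argue the same way with a pair of vectors. Take unit $\vx,\vy$ attaining $\vx^{\T}\mB\vy=\|\mB\|_{\op}$ (the top singular pair), and pick $\vu\in\cN_1$, $\vv\in\cN_2$ within $\varepsilon$ of $\vx$ and $\vy$. The decomposition
\[
 \vx^{\T}\mB\vy-\vu^{\T}\mB\vv=(\vx-\vu)^{\T}\mB\vy+\vu^{\T}\mB(\vy-\vv)
\]
again bounds the error by $2\varepsilon\|\mB\|_{\op}$, and rearranging gives the claim. If $\mB=\bm0$, or in case (i) $\mH=\bm0$, the inequality is trivial.

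For the cardinality claim, I would use the volumetric argument. Take a maximal $1/4$-separated subset of $S^{d-1}$; by maximality it is a $1/4$-net. The balls of radius $1/8$ around its points are disjoint and lie inside the ball of radius $1+1/8$. Comparing volumes bounds the number of points by $(9/8)^d/(1/8)^d=9^d$.

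No step is a real obstacle. The only points needing care are the existence of the maximizers, which follows from compactness, and noting that the net points have unit norm, so that the cross terms are controlled by $\varepsilon\|\cdot\|_{\op}$ rather than by $\varepsilon(1+\varepsilon)\|\cdot\|_{\op}$. Alternatively, one could cite \citet[Lemma~4.4.1, Exercise~4.4.3, Corollary~4.2.13]{Vershynin2018} directly.
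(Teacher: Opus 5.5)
Your proposal is correct and matches the paper's proof: both use a maximizer, its nearest net point, and two cross terms each bounded by $\varepsilon\|\cdot\|_{\op}$, in the symmetric and the bilinear case. The only difference is that you prove the $9^d$ cardinality bound by the volumetric packing argument, whereas the paper cites \citet[Corollary~4.2.13]{Vershynin2018}, whose proof is that same argument.
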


\begin{proof}
For (i), choose \(\vx\in S^{d-1}\) satisfying \(|\vx^{\T}\mH\vx|=\|\mH\|_{\op}\), and let \(\vx_0\in\cN\) satisfy \(\|\vx-\vx_0\|_2\le\varepsilon\).
Then
\[
\begin{aligned}
\|\mH\|_{\op}
&\le|\vx_0^{\T}\mH\vx_0|
 +|\,(\vx-\vx_0)^{\T}\mH\vx\,|
 +|\,\vx_0^{\T}\mH(\vx-\vx_0)\,|\\
&\le \max_{\vu\in\cN}|\vu^{\T}\mH\vu|+2\varepsilon\|\mH\|_{\op}.
\end{aligned}
\]
Rearranging proves (i). For (ii), choose unit vectors \(\vx,\vy\) attaining \(|\vx^{\T}\mB\vy|=\|\mB\|_{\op}\) and approximate each by its net. The same two-error argument applies. The cardinality bound follows from \citet[Corollary~4.2.13]{Vershynin2018}.
\end{proof}

When all groups are nonempty, fix a measurable compact singular value decomposition
\begin{equation*}
        \mY_{\vmu}=\mU\mD\mV^{\T},
        \qquad
        \mD=\mLambda^{1/2}.
\end{equation*}
Here $\mU\in\R^{p\times r}$ and $\mV\in\R^{n\times r}$ have orthonormal columns and are deterministic given the labels. All calculations involving diagonal $\mD$ use these bases.

\begin{proposition}[A simultaneous concentration event]
\label{prop:basic-event}
Condition on the labels and work on \(\mathcal E_B^{\mathrm{cnt}}\).
For every fixed \(B>0\), there is an event \(\mathcal E_B^{\mathrm{noise}}\) whose conditional probability, given the labels, is at least \(1-O(n^{-B})\), and on which all of the following inequalities hold:
\begin{align}
\|\mM_{\mSigma}\|_{\op}
&\le C_B(\nu\sqrt n+\rho n),
\label{eq:E-Mop}\\
\|\mSigma^{1/2}\mG\|_{\op}
&\le C_B(\nu+\rho\sqrt n),
\label{eq:E-SGop}\\
\|\mM_{\mSigma}\mW\|_{2\to\infty}
&\le C_Ba_n,
\label{eq:E-MV-row}\\
\|\mV^{\T}\mM_{\mSigma}\mV\|_{\op}
&\le C_Ba_n,
\label{eq:E-VMV}\\
\|\mU^{\T}\mG\mV\|_{\op}
&\le C_B\sqrt{\rho\log n},
\label{eq:E-UGV}\\
\sup_{\substack{\va\in\Span(\mU)\\
\|\mSigma^{1/2}\va\|_2\le1}}
\|\mG^{\T}\va\|_2
&\le C_B\sqrt n.
\label{eq:E-GU-metric}
\end{align}
\end{proposition}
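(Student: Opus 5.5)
The plan is to prove each of the six inequalities separately, conditionally on the labels, with conditional failure probability $O(n^{-B})$, and then intersect the six events; since there are finitely many, the union bound keeps the order $n^{-B}$. Throughout, $\mW$, $\mU$, $\mV$ are deterministic given the labels. We work on $\mathcal E_B^{\mathrm{cnt}}$, so $\|\mW\|_{2\to\infty}\le Cn^{-1/2}$ and all counts are of order $n$. Every quantity will be written as a linear or quadratic form in $\vXi_n$ and handled by Lemma~\ref{lem:subg-tools}. We take $s=C_B\log n$ for single forms, and $s=C(d+B\log n)$ when a $1/4$-net of $S^{d-1}$ is used.

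\textbf{Quadratic-form bounds \eqref{eq:E-Mop}, \eqref{eq:E-MV-row}, \eqref{eq:E-VMV}.}
\begin{enumerate}[label=\textup{(\alph*)},leftmargin=3em]
\item For a unit $\vu\in\R^n$, write $\vu^{\T}\mM_{\mSigma}\vu=\vXi^{\T}(\vu\vu^{\T}\otimes\mSigma)\vXi-\E[\cdot]$. The matrix $\vu\vu^{\T}\otimes\mSigma$ has Frobenius norm $\nu$ and operator norm $\rho$. Hanson--Wright with $s\asymp n$ and a $9^n$-net (Lemma~\ref{lem:net-reconstruction}(i)) gives \eqref{eq:E-Mop}.
\item Entry $(i,k)$ of $\mM_{\mSigma}\mW$ is $\sum_j W_{jk}(\vg_i^{\T}\vg_j-\delta_{ij}\tr\mSigma)$. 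This is a quadratic form with matrix $\tfrac12(\ve_i\vw_k^{\T}+\vw_k\ve_i^{\T})\otimes\mSigma$, where $\vw_k$ is the $k$th column of $\mW$ and $\|\vw_k\|_2=1$. Its Frobenius norm is at most $C\nu$ and its operator norm at most $C\rho$. Taking $s=C_B\log n$ gives the bound $C_B a_n$, and a union bound over the $nK$ entries gives \eqref{eq:E-MV-row}, since $K$ is fixed.
\item For \eqref{eq:E-VMV}, use an $r$-dimensional net with $s=C_B\log n$. This gives $\|\mV^{\T}\mM_{\mSigma}\mV\|_{\op}\le C_B(\nu\sqrt{\log n}+\rho\log n)$ directly.
\end{enumerate}

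\textbf{Bilinear and metric bounds \eqref{eq:E-SGop}, \eqref{eq:E-UGV}, \eqref{eq:E-GU-metric}.}
\begin{enumerate}[label=\textup{(\alph*)},leftmargin=3em,resume]
\item For \eqref{eq:E-UGV}, note that $\va^{\T}\mU^{\T}\mG\mV\vb$ is linear in $\vXi$ with coefficient $\mV\vb\otimes\mSigma^{1/2}\mU\va$, whose norm is at most $\sqrt\rho$. Lemma~\ref{lem:subg-tools}(i) with $9^{2r}$-nets and Lemma~\ref{lem:net-reconstruction}(ii) give \eqref{eq:E-UGV}.
\item For \eqref{eq:E-SGop}, bound $\|\mSigma^{1/2}\mG\|_{\op}^2=\|\mG^{\T}\mSigma\mG\|_{\op}$. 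For a unit $\vu\in\R^n$, $\vu^{\T}\mG^{\T}\mSigma\mG\vu$ is a quadratic form with matrix $\vu\vu^{\T}\otimes\mSigma^2$. Its mean is $\tr\mSigma^2=\nu^2$, its Frobenius norm is at most $\rho\nu$, and its operator norm is at most $\rho^2$. Hanson--Wright on an $n$-dimensional net with $s\asymp n$ gives the bound $\nu^2+C(\rho\nu\sqrt n+\rho^2n)\le C(\nu+\rho\sqrt n)^2$. Taking square roots yields \eqref{eq:E-SGop}.
\item For \eqref{eq:E-GU-metric}, parametrize the admissible set. Put $\mF=\mSigma^{1/2}\mU\in\R^{p\times r}$. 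Every $\va\in\Span(\mU)$ with $\|\mSigma^{1/2}\va\|_2\le1$ can be written $\va=\mU\vz$ with $\|\mF\vz\|_2\le1$.
\end{enumerate}

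Step (f) is the main obstacle. $\mF$ may be rank-deficient, and the constraint set is unbounded in directions $\vz\in\Ker\mF$. On those directions, however, $\mG^{\T}\mU\vz=\mXi^{\T}\mSigma^{1/2}\mU\vz=\bm0$, where $\mXi=(\vxi_1,\ldots,\vxi_n)$. So only the component of $\vz$ in $\Imv(\mF^{\T})$ matters. Take the compact SVD $\mF=\mU_F\mD_F\mV_F^{\T}$ with rank $d\le r$. Then $\mG^{\T}\va=\mXi^{\T}\mU_F\vy$, where $\vy=\mD_F\mV_F^{\T}\vz$ satisfies $\|\vy\|_2\le1$. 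The supremum is therefore at most $\|\mXi^{\T}\mU_F\|_{\op}$, where $\mU_F$ is a $p\times d$ matrix with orthonormal columns. Each coefficient $\vb\otimes\mU_F\vy$ has unit norm. Lemma~\ref{lem:subg-tools}(i) on nets of $S^{n-1}\times S^{d-1}$ with $t\asymp\sqrt{n+\log n}$ then gives $C_B\sqrt n$.
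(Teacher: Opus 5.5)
Your proposal is correct and follows the paper's proof essentially step by step. You use the same Kronecker-structured quadratic and linear forms in the stacked noise vector, Hanson--Wright with $s\asymp n$ on $9^n$-nets for \eqref{eq:E-Mop}--\eqref{eq:E-SGop} and $s\asymp\log n$ otherwise, and the same union bound over the $nK$ entries for \eqref{eq:E-MV-row}. The only variation is in \eqref{eq:E-GU-metric}. There the paper applies Hanson--Wright separately to each of at most $r$ $\mSigma$-orthonormal directions and combines them by Cauchy--Schwarz, whereas you take the SVD of $\mSigma^{1/2}\mU$ and bound the resulting $n\times d$ operator norm by a product net; this reaches the same $C_B\sqrt n$ equally validly.
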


\begin{proof}
Given the labels, $\mU$, $\mV$, and $\mW$ are deterministic. We prove each bound and intersect the six events.

To prove \eqref{eq:E-Mop}, let \(\mX=(\vxi_1,\ldots,\vxi_n)\), so \(\mG=\mSigma^{1/2}\mX\).
For fixed \(\vu\in S^{n-1}\), with \(\vXi=(\vxi_1^{\T},\ldots,\vxi_n^{\T})^{\T}\),
\[
\vu^{\T}\mM_{\mSigma}\vu
=\vXi^{\T}(\vu\vu^{\T}\otimes\mSigma)\vXi
 -\tr(\vu\vu^{\T}\otimes\mSigma).
\]
The Kronecker product identities give
\[
\|\vu\vu^{\T}\otimes\mSigma\|_F=\nu,
\qquad
\|\vu\vu^{\T}\otimes\mSigma\|_{\op}=\rho.
\]
Lemma~\ref{lem:subg-tools}(ii) therefore implies
\[
\Pbb\left(
|\vu^{\T}\mM_{\mSigma}\vu|>
C_{\kappa}(\nu\sqrt t+\rho t)
\,\middle|\,Z_1,\ldots,Z_n
\right)
\le2e^{-c_{\kappa}t}.
\]
Apply the bound with $t=A_Bn$ on a $1/4$-net of at most $9^n$ points. A union bound and Lemma~\ref{lem:net-reconstruction}(i) give failure probability $O(n^{-B})$ for sufficiently large $A_B$.

For \eqref{eq:E-SGop}, repeat the argument with $\mSigma$ replaced by $\mSigma^2$. Since $\|\mSigma^2\|_F\le\rho\nu$ and $\|\mSigma^2\|_{\op}=\rho^2$, with conditional probability $1-O(n^{-B})$,
\[
 \|\mX^{\T}\mSigma^2\mX-\nu^2\mI_n\|_{\op}
 \le C_B(\rho\nu\sqrt n+\rho^2n).
\]
Consequently,
\[
 \|\mSigma^{1/2}\mG\|_{\op}^2
 =\|\mX^{\T}\mSigma^2\mX\|_{\op}
 \le\nu^2+C_B(\rho\nu\sqrt n+\rho^2n)
 \le C_B'(\nu+\rho\sqrt n)^2,
\]
which gives \eqref{eq:E-SGop}.

For \eqref{eq:E-MV-row}, let $\ve_i$ be the $i$th coordinate vector in $\R^n$.
For deterministic $\vv\in\R^n$ with $\|\vv\|_2\le1$, put
\[
 \mH_{i,\vv}=\frac{\ve_i\vv^{\T}+\vv\ve_i^{\T}}2.
\]
Then
\[
 (\mM_{\mSigma}\vv)_i
 =\vXi^{\T}(\mH_{i,\vv}\otimes\mSigma)\vXi
   -\tr(\mH_{i,\vv}\otimes\mSigma),
\]
and
\[
 \|\mH_{i,\vv}\|_{\op}\le1,
 \qquad
 \|\mH_{i,\vv}\|_F^2=\frac{\|\vv\|_2^2+v_i^2}{2}\le1.
\]
Apply Lemma~\ref{lem:subg-tools}(ii) with $s=A_B\log n$ to the columns of $\mW$, and take a union bound over $nK$ entries, using fixed $K$.

For \eqref{eq:E-VMV}, fix $\vb\in S^{r-1}$ and set $\vv=\mV\vb$.
Then
\[
 \vb^{\T}\mV^{\T}\mM_{\mSigma}\mV\vb
 =\vXi^{\T}(\vv\vv^{\T}\otimes\mSigma)\vXi
   -\tr(\vv\vv^{\T}\otimes\mSigma).
\]
The coefficient matrix has Frobenius norm $\nu$ and operator norm $\rho$. Apply Lemma~\ref{lem:subg-tools}(ii) with $s=A_B\log n$ on a $1/4$-net of $S^{r-1}$ with at most $9^r$ points. A union bound and Lemma~\ref{lem:net-reconstruction}(i) give \eqref{eq:E-VMV}.

For \eqref{eq:E-UGV}, take \(\va,\vb\in S^{r-1}\) and let \(\vq=\mSigma^{1/2}\mU\va\) and \(\vs=\mV\vb\).
Then
\[
 \va^{\T}\mU^{\T}\mG\mV\vb
 =\sum_{j=1}^ns_j\ip{\vq}{\vxi_j}
 =\ip{\vh_{\va,\vb}}{\vXi},
\]
where \( \vh_{\va,\vb}=(s_1\vq^{\T},\ldots,s_n\vq^{\T})^{\T} \).
Since \(\|\vs\|_2=1\),
\[
        \|\vh_{\va,\vb}\|_2^2
        =\|\vq\|_2^2
        =\va^{\T}\mU^{\T}\mSigma\mU\va
        \le\rho.
\]
Lemma~\ref{lem:subg-tools}(i), \(1/4\)-nets of \(S^{r-1}\), and Lemma~\ref{lem:net-reconstruction}(ii) give \( \|\mU^{\T}\mG\mV\|_{\op}\le C_B\sqrt{\rho\log n} \) by a union bound.

For \eqref{eq:E-GU-metric}, equip \(\Span(\mU)\) with \(\ip{\va}{\vb}_{\mSigma}=\va^{\T}\mSigma\vb\) and quotient out \(\Span(\mU)\cap\Ker(\mSigma^{1/2})\), which \(\mG^{\T}\) annihilates. The quotient has dimension \(d_{\mSigma}\le r\). Choose a \(\mSigma\)-orthonormal basis \(\ve_1,\ldots,\ve_{d_{\mSigma}}\) and set \(\vq_\ell=\mSigma^{1/2}\ve_\ell\), so \(\|\vq_\ell\|_2=1\).
Then
\[
 \|\mG^{\T}\ve_\ell\|_2^2
 =\sum_{i=1}^n\ip{\vq_\ell}{\vxi_i}^2
 =\vXi^{\T}(\mI_n\otimes\vq_\ell\vq_\ell^{\T})\vXi.
\]
The expectation is \(n\), and the coefficient matrix has Frobenius and operator norms \(\sqrt n\) and \(1\). Lemma~\ref{lem:subg-tools}(ii), with \(t=A_B\log n\), gives \( \|\mG^{\T}\ve_\ell\|_2\le C_B\sqrt n \) for all at most $r$ basis vectors. For \(\va=\sum_\ell\alpha_\ell\ve_\ell\) modulo the kernel, \( \sum_\ell\alpha_\ell^2=\|\mSigma^{1/2}\va\|_2^2 \). Cauchy--Schwarz yields
\[
 \|\mG^{\T}\va\|_2
 \le \left(\sum_\ell\alpha_\ell^2\right)^{1/2}
      \left(\sum_\ell\|\mG^{\T}\ve_\ell\|_2^2\right)^{1/2}
 \le C_B\sqrt n\,\|\mSigma^{1/2}\va\|_2.
\]
This proves \eqref{eq:E-GU-metric}. Intersect the six events.
\end{proof}

For \(i=1,\ldots,n\), let
\[
        \mathcal F_i
        =\sigma\bigl(Z_1,\ldots,Z_n,\{\vg_j:j\ne i\}\bigr),
\]
let \(\mG_{-i}\) denote \(\mG\) with column \(i\) removed, and put
\[
        \mM^{(i)}
        =\mG_{-i}^{\T}\mG_{-i}-\tr(\mSigma)\mI_{n-1}.
\]

\begin{lemma}[Leave-one-out measurable concentration events]
\label{lem:loo-basic-event}
Fix $B>0$ and condition on a label realization in $\mathcal E_B^{\mathrm{cnt}}$.
There are events \(\mathcal E_{i,B}^{\mathrm{loo}}\in\mathcal F_i\) such that
\[
\Pbb\left(\bigcap_{i=1}^n\mathcal E_{i,B}^{\mathrm{loo}}
\,\middle|\,Z_1,\ldots,Z_n\right)
\ge1-O(n^{-B}),
\]
and, on \(\mathcal E_{i,B}^{\mathrm{loo}}\),
\begin{align}
\|\mM^{(i)}\|_{\op}
&\le C_B(\nu\sqrt n+\rho n),
\label{eq:loo-Mop}\\
\|\mSigma^{1/2}\mG_{-i}\|_{\op}
&\le C_B(\nu+\rho\sqrt n),
\notag\\
\sup_{\substack{\va\in\Span(\mU)\\
\|\mSigma^{1/2}\va\|_2\le1}}
\|\mG_{-i}^{\T}\va\|_2
&\le C_B\sqrt n.
\label{eq:loo-GU-metric}
\end{align}
\end{lemma}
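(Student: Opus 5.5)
The plan is to repeat, for each fixed $i$, the arguments used for \eqref{eq:E-Mop}, \eqref{eq:E-SGop} and \eqref{eq:E-GU-metric} in Proposition~\ref{prop:basic-event}. The only change is that $\mG$ is replaced by $\mG_{-i}$, which involves only the $n-1$ noise columns $\{\vg_j:j\ne i\}$. Given the labels, $\mU$ is deterministic, and $\mG_{-i}$ is a function of $\{\vg_j:j\neq i\}$. Define $\mathcal E_{i,B}^{\mathrm{loo}}$ as the intersection of the three events that the quantities on the left of \eqref{eq:loo-Mop}, the middle display, and \eqref{eq:loo-GU-metric} satisfy the stated bounds. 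These events are then determined by $Z_1,\ldots,Z_n$ and $\{\vg_j:j\ne i\}$, so they lie in $\mathcal F_i$; the supremum in \eqref{eq:loo-GU-metric} is a measurable function since it is a supremum over a finite-dimensional set that reduces to a finite basis, as in the earlier proof.

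For \eqref{eq:loo-Mop}, write $\vXi_{-i}\in\R^{(n-1)p}$ for the stacked $\vxi_j$, $j\ne i$. For $\vu\in S^{n-2}$, $\vu^{\T}\mM^{(i)}\vu=\vXi_{-i}^{\T}(\vu\vu^{\T}\otimes\mSigma)\vXi_{-i}-\tr(\vu\vu^{\T}\otimes\mSigma)$. The coefficient matrix has Frobenius norm $\nu$ and operator norm $\rho$. Lemma~\ref{lem:subg-tools}(ii) with $s=A_Bn$, a $1/4$-net of at most $9^{n-1}$ points, and Lemma~\ref{lem:net-reconstruction}(i) give failure probability at most $2\cdot 9^{n}e^{-c_\kappa A_Bn}\le e^{-n}$ once $A_B$ is large. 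The bound on $\|\mSigma^{1/2}\mG_{-i}\|_{\op}$ follows in the same way with $\mSigma$ replaced by $\mSigma^2$, exactly as for \eqref{eq:E-SGop}, using $\nu^2+C(\rho\nu\sqrt n+\rho^2n)\le C'(\nu+\rho\sqrt n)^2$.

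For \eqref{eq:loo-GU-metric}, use the same $\mSigma$-orthonormal basis $\ve_1,\ldots,\ve_{d_\mSigma}$ of the quotient of $\Span(\mU)$ as before; it is deterministic given the labels. Then $\|\mG_{-i}^{\T}\ve_\ell\|_2^2=\vXi_{-i}^{\T}(\mI_{n-1}\otimes\vq_\ell\vq_\ell^{\T})\vXi_{-i}$, which has mean $n-1$. Hanson--Wright with $s=A_B\log n$ gives $\|\mG_{-i}^{\T}\ve_\ell\|_2\le C_B\sqrt n$ with failure probability $2n^{-c_\kappa A_B}$, and Cauchy--Schwarz extends this to all admissible $\va$.

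The only real point is the union bound over $i$, and this is where the exponents must be checked. Each of the three pieces fails with conditional probability at most $O(n^{-B-1})$ after enlarging $A_B$, so the union over $n$ indices costs $O(n^{-B})$. The operator-norm pieces fail with probability at most $e^{-n}$, which is far smaller, and the $\log n$-scale piece for \eqref{eq:loo-GU-metric} needs $c_\kappa A_B\ge B+1$. Since constants may depend on $B$, this only changes $C_B$, and the conditional bound for $\bigcap_i\mathcal E_{i,B}^{\mathrm{loo}}$ follows.
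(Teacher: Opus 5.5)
Your argument is correct, but it takes a longer route than the paper. You re-run the Hanson--Wright and net arguments for each fixed $i$ on the remaining $n-1$ columns. You then pay for a union bound over $i$, which forces you to check that the $\log n$-scale bound behind \eqref{eq:loo-GU-metric} fails with probability $O(n^{-B-1})$.

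The paper avoids all new probability estimates. Write $\mG_{-i}=\mG\mJ_i$ with the coordinate inclusion matrix $\mJ_i\in\R^{n\times(n-1)}$; then $\mM^{(i)}=\mJ_i^{\T}\mM_{\mSigma}\mJ_i$. Since $\|\mJ_i\|_{\op}=1$, each leave-one-out quantity is bounded deterministically by its full-sample counterpart:
\[
\|\mM^{(i)}\|_{\op}\le\|\mM_{\mSigma}\|_{\op},\qquad
\|\mSigma^{1/2}\mG_{-i}\|_{\op}\le\|\mSigma^{1/2}\mG\|_{\op},\qquad
\|\mG_{-i}^{\T}\va\|_2\le\|\mG^{\T}\va\|_2.
\]
Hence, with the same constants, $\mathcal E_B^{\mathrm{noise}}\subseteq\bigcap_i\mathcal E_{i,B}^{\mathrm{loo}}$. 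The conditional probability bound is then inherited directly from Proposition~\ref{prop:basic-event}, with no union over $i$ and no change of constants. The $\mathcal F_i$-measurability is argued exactly as you do.

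Your version is self-contained and would still work if the leave-one-out object were not a compression of the full one. Here, though, the monotonicity observation makes the per-$i$ concentration unnecessary.
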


\begin{proof}
Let $\mJ_i\in\R^{n\times(n-1)}$ be the coordinate inclusion matrix that omits coordinate $i$.
Then
\[
 \mG_{-i}=\mG\mJ_i,
 \qquad
 \mM^{(i)}=\mJ_i^{\T}\mM_{\mSigma}\mJ_i.
\]
Since $\|\mJ_i\|_{\op}=1$,
\[
 \|\mM^{(i)}\|_{\op}\le\|\mM_{\mSigma}\|_{\op},
 \qquad
 \|\mSigma^{1/2}\mG_{-i}\|_{\op}
 \le\|\mSigma^{1/2}\mG\|_{\op},
\]
and
\[
 \|\mG_{-i}^{\T}\va\|_2
 \le \|\mG^{\T}\va\|_2
 \qquad\text{for every }\va.
\]

For each $i$, let $\mathcal E_{i,B}^{\mathrm{loo}}$ be the event defined by the three bounds in the lemma. Since $\mathcal E_{i,B}^{\mathrm{loo}}$ depends only on $\mG_{-i}$ and the labels, it is $\mathcal F_i$-measurable.

By the preceding three inequalities, the bounds
\eqref{eq:E-Mop}, \eqref{eq:E-SGop}, and \eqref{eq:E-GU-metric}
imply the corresponding leave-one-out bounds with the same constants.
Hence
\[
 \mathcal E_B^{\mathrm{noise}}
 \subseteq
 \bigcap_{i=1}^n\mathcal E_{i,B}^{\mathrm{loo}}.
\]
Therefore, by Proposition~\ref{prop:basic-event},
\[
\Pbb\left(
\bigcap_{i=1}^n\mathcal E_{i,B}^{\mathrm{loo}}
\,\middle|\,Z_1,\ldots,Z_n
\right)
\ge
\Pbb\left(
\mathcal E_B^{\mathrm{noise}}
\,\middle|\,Z_1,\ldots,Z_n
\right)
\ge 1-O(n^{-B}).
\]
\end{proof}

\begin{lemma}[Leave-one-out norms of cross products]
\label{lem:loo-ri-norm}
For every fixed \(B>0\), with probability \(1-O(n^{-B})\), simultaneously for every \(i\),
\begin{align}
        \|\mSigma^{1/2}\vg_i\|_2
        &\le C_B(\nu+\rho\sqrt{\log n}),
        \label{eq:Sigmagi}\\
        \|\mG_{-i}^{\T}\vg_i\|_2
        &\le C_B\sqrt n(\nu+\rho\sqrt{\log n}).
        \label{eq:ri-norm}
\end{align}
\end{lemma}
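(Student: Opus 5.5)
We prove both bounds conditionally on the labels and then take a union bound over $i$. For \eqref{eq:Sigmagi}, fix $i$. Write $\mSigma^{1/2}\vg_i=\mSigma\vxi_i$, so that
\[
\|\mSigma^{1/2}\vg_i\|_2^2=\vxi_i^{\T}\mSigma^2\vxi_i .
\]
This quadratic form has mean $\tr(\mSigma^2)=\nu^2$. Its coefficient matrix $\mSigma^2$ satisfies $\|\mSigma^2\|_F\le\rho\nu$ and $\|\mSigma^2\|_{\op}=\rho^2$. Lemma~\ref{lem:subg-tools}(ii), applied with $m=1$ and $s=A_B\log n$, gives, with probability at least $1-2n^{-c_\kappa A_B}$,
\[
\vxi_i^{\T}\mSigma^2\vxi_i\le\nu^2+C(\rho\nu\sqrt{\log n}+\rho^2\log n)\le C'(\nu+\rho\sqrt{\log n})^2 ,
\]
where the last step uses $2\rho\nu\sqrt{\log n}\le\nu^2+\rho^2\log n$. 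Choosing $A_B$ large makes the failure probability $O(n^{-B-1})$, and a union bound over the $n$ indices gives \eqref{eq:Sigmagi} simultaneously for all $i$.

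For \eqref{eq:ri-norm} we use a decoupling argument. Condition on $\vg_i$ (together with the labels); the columns $\vg_j$, $j\ne i$, are independent of $\vg_i$. For fixed $\vg_i$,
\[
\|\mG_{-i}^{\T}\vg_i\|_2^2=\sum_{j\ne i}\ip{\vg_i}{\vg_j}^2=\sum_{j\ne i}\ip{\mSigma^{1/2}\vg_i}{\vxi_j}^2=\vXi_{-i}^{\T}\bigl(\mI_{n-1}\otimes \vw\vw^{\T}\bigr)\vXi_{-i},
\qquad \vw=\mSigma^{1/2}\vg_i .
\]
This is a quadratic form in the stacked independent coordinates of $\{\vxi_j\}_{j\ne i}$. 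Its conditional mean is $(n-1)\|\vw\|_2^2$, and its coefficient matrix has Frobenius norm $\sqrt{n-1}\,\|\vw\|_2^2$ and operator norm $\|\vw\|_2^2$. Applying Lemma~\ref{lem:subg-tools}(ii) conditionally with $s=A_B\log n$ gives
\[
\|\mG_{-i}^{\T}\vg_i\|_2^2\le\|\vw\|_2^2\bigl(n+C\sqrt{n\log n}+C\log n\bigr)\le C n\|\vw\|_2^2
\]
with conditional probability at least $1-2n^{-c_\kappa A_B}$. Integrating over $\vg_i$ preserves this bound.

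The two steps are then combined. Intersect the conditional event with the event \eqref{eq:Sigmagi}, on which $\|\vw\|_2\le C_B(\nu+\rho\sqrt{\log n})$, and take a union bound over $i$. This yields
\[
\|\mG_{-i}^{\T}\vg_i\|_2\le C_B\sqrt n(\nu+\rho\sqrt{\log n})
\]
for all $i$ with probability $1-O(n^{-B})$. The unconditional statement follows by combining with Lemma~\ref{lem:count-concentration}; the labels in fact play no role here because the noise is independent of them.

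The main point to get right is the conditioning step for \eqref{eq:ri-norm}. Hanson--Wright must be applied with the coefficient matrix depending on $\vg_i$, which is legitimate only because $\vg_i$ is independent of $\mG_{-i}$ and the constants in Lemma~\ref{lem:subg-tools}(ii) are uniform in the (deterministic) coefficient matrix. The remaining steps are routine norm computations for Kronecker products.
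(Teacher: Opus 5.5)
Your proof is correct and follows the paper's argument. You use Hanson--Wright for $\vxi_i^{\T}\mSigma^2\vxi_i$, then condition on $\vg_i$ and apply Hanson--Wright again to get $\|\mG_{-i}^{\T}\vg_i\|_2\le C\sqrt n\,\|\mSigma^{1/2}\vg_i\|_2$, and finish by integrating and taking a union bound over $i$. The only cosmetic difference is that the paper normalizes to the scalars $\omega_j=\vg_j^{\T}\vg_i/\|\mSigma^{1/2}\vg_i\|_2$ and treats $\mSigma^{1/2}\vg_i=\bm0$ separately, whereas you apply Lemma~\ref{lem:subg-tools}(ii) directly to the stacked coordinates with coefficient matrix $\mI_{n-1}\otimes\vw\vw^{\T}$, which covers the degenerate case automatically.
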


\begin{proof}
Since \( \|\mSigma^{1/2}\vg_i\|_2^2 =\vxi_i^{\T}\mSigma^2\vxi_i \), Lemma~\ref{lem:subg-tools}(ii) and
\[
        \E[\vxi_i^{\T}\mSigma^2\vxi_i]=\nu^2,
        \qquad
        \|\mSigma^2\|_F\le\rho\nu,
        \qquad
        \|\mSigma^2\|_{\op}=\rho^2,
\]
imply
\[
\|\mSigma^{1/2}\vg_i\|_2
\le C_{\kappa}(\nu+\rho\sqrt t)
\]
except with probability \(2e^{-c_{\kappa}t}\). Take \(t=A_B\log n\) and a union bound to obtain \eqref{eq:Sigmagi}.

For the second bound, condition on \(\vg_i\). If \(\|\mSigma^{1/2}\vg_i\|_2=0\), then \(\mG_{-i}^{\T}\vg_i\) vanishes almost surely. Otherwise, for \(j\ne i\), set
\[
        \omega_j
        =\frac{\vg_j^{\T}\vg_i}{\|\mSigma^{1/2}\vg_i\|_2}.
\]
Given \(\vg_i\), the \(\omega_j\)'s are independent, centered, and uniformly sub-Gaussian. Since \(\E[\vxi_j\vxi_j^{\T}]=\mI_p\),
\[
        \E[\omega_j^2\mid\vg_i]=1.
\]
Hanson--Wright applied conditionally to \((\omega_j)_{j\ne i}\) and the identity, with \(t=A_B\log n\), gives
\[
\Pbb\left(
\sum_{j\ne i}\omega_j^2>
(n-1)+C_B\{\sqrt{n\log n}+\log n\}
\,\middle|\,\vg_i
\right)
\le Cn^{-B-2}.
\]
The threshold is at most $C_Bn$ for large $n$, so with the same conditional failure probability,
\[
 \|\mG_{-i}^{\T}\vg_i\|_2
 \le C_B\sqrt n\,\|\mSigma^{1/2}\vg_i\|_2.
\]
Integrate, use \eqref{eq:Sigmagi}, and take a union bound over \(i\) to obtain \eqref{eq:ri-norm}.
\end{proof}

\subsection{Signal separation and auxiliary rates}
\label{sec:rates}

\begin{lemma}[Deterministic rate consequences]
\label{lem:rate-consequences}
Assume \eqref{eq:main-condition} and define
\begin{align*}
 \eta_n
 &:=\sqrt{\frac{\rho}{\LK}}+\frac{\nu}{\sqrt n\LK}+\frac{\rho}{\LK},\\
 b_n
 &:=\frac{(\nu+\rho\sqrt n)\sqrt{\log n}}{n\LK},\\
 c_n
 &:=\frac{(\nu+\rho\sqrt{\log n})^2}{n^{3/2}\LK^2}.
\end{align*}
With $\epsilon_n$ as defined in \eqref{eq:epsilon-main}, for all sufficiently large $n$,
\begin{equation}
 \eta_n\le2\epsilon_n,\qquad
 \sqrt n\,b_n\le2\epsilon_n,\qquad
 \sqrt n\,c_n\le\epsilon_n^2.
 \label{eq:rates-epsilon}
\end{equation}
Consequently,
\begin{equation}
 \eta_n=o(1),\qquad
 b_n\eta_n+c_n\le\frac{5\epsilon_n^2}{\sqrt n}
                  =o(n^{-1/2}).
 \label{eq:rates-small}
\end{equation}
Moreover,
\begin{equation}
 \frac{\rho}{\LK}=o(1),\qquad
 \frac{\rho\log n}{\LK}=o(1),\qquad
 \frac{\nu}{\sqrt n\LK}=o(1),\qquad
 \frac{a_n}{\sqrt n\LK}=o(1).
 \label{eq:direct-consequences}
\end{equation}
If $\mSigma=\bm0$, all these quantities vanish.
\end{lemma}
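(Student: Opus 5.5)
The plan is to compare each quantity term by term with $\epsilon_n$, using only the definitions of $a_n$, $\epsilon_n$, $\eta_n$, $b_n$, $c_n$, elementary inequalities, and \eqref{eq:main-condition}. Write $\alpha=\rho\log n/\LK$ and $\beta=a_n/(\sqrt n\LK)$, so that $\epsilon_n=\sqrt\alpha+\beta$. Condition \eqref{eq:main-condition} says exactly that $\alpha+\beta\to0$, since the denominator there is $\LK(\alpha+\beta)$. Hence $\alpha,\beta\to0$ and $\epsilon_n\to0$, and for large $n$ we have $\alpha\le\sqrt\alpha$ and $\log n\ge1$. I would first record \eqref{eq:direct-consequences}. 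We have $\rho/\LK\le\alpha$, $\rho\log n/\LK=\alpha$, and $a_n/(\sqrt n\LK)=\beta$. Since $a_n\ge\nu\sqrt{\log n}\ge\nu$, also $\nu/(\sqrt n\LK)\le\beta$. If $\mSigma=\bm0$, every quantity is zero.

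\textbf{The bound on $\eta_n$.} The first term satisfies $\sqrt{\rho/\LK}\le\sqrt\alpha$. The second satisfies $\nu/(\sqrt n\LK)\le\beta$. The third satisfies $\rho/\LK\le\alpha\le\sqrt\alpha$. Therefore $\eta_n\le2\sqrt\alpha+\beta\le2\epsilon_n$.

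\textbf{The bound on $\sqrt n\,b_n$.} Write $\sqrt n\,b_n=\nu\sqrt{\log n}/(\sqrt n\LK)+\rho\sqrt{\log n}/\LK$. The first piece is at most $\beta$. The second equals $\sqrt{\alpha}\cdot\sqrt{\rho/\LK}$, and since $\rho/\LK\le\alpha\le1$ for large $n$, it is at most $\sqrt\alpha$. So $\sqrt n\,b_n\le\sqrt\alpha+\beta\le2\epsilon_n$.

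\textbf{The bound on $\sqrt n\,c_n$.} Here $\sqrt n\,c_n=\bigl\{(\nu+\rho\sqrt{\log n})/(\sqrt n\LK)\bigr\}^2$. Since $\nu+\rho\sqrt{\log n}\le\nu\sqrt{\log n}+\rho\log n=a_n$ once $\log n\ge1$, this is at most $\beta^2\le\epsilon_n^2$.

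\textbf{The consequences \eqref{eq:rates-small}.} It follows that $\eta_n\le2\epsilon_n=o(1)$. Combining the bounds, $b_n\eta_n\le(2\epsilon_n/\sqrt n)(2\epsilon_n)=4\epsilon_n^2/\sqrt n$ and $c_n\le\epsilon_n^2/\sqrt n$, so $b_n\eta_n+c_n\le5\epsilon_n^2/\sqrt n=o(n^{-1/2})$.

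No step is a real obstacle. The only point needing care is the choice of "sufficiently large $n$": it must be large enough that $\log n\ge1$ and $\alpha\le1$. Both hold eventually by \eqref{eq:main-condition}, and all other inequalities are exact.
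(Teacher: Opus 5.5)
Your proof is correct and follows essentially the same route as the paper. The paper likewise sets $x_n=\sqrt{\alpha}$, $y_n=\beta$ and bounds $\eta_n$, $\sqrt n\,b_n$, $\sqrt n\,c_n$ term by term using $a_n=(\nu+\rho\sqrt{\log n})\sqrt{\log n}$, $\log n\ge1$, and eventual smallness; the only cosmetic difference is that you bound $\rho\sqrt{\log n}/\LK$ by $\sqrt{\alpha}$, whereas the paper bounds it by $x_n^2$.
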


\begin{proof}
Put
\[
 x_n=\sqrt{\rho\log n/\LK},\qquad
 y_n=a_n/(\sqrt n\LK),\qquad \epsilon_n=x_n+y_n.
\]
For all sufficiently large $n$, $\log n\ge1$ and $\epsilon_n\le1$.
Since $a_n=(\nu+\rho\sqrt{\log n})\sqrt{\log n}$,
\[
\begin{aligned}
 \eta_n&\le x_n+y_n+x_n^2\le2\epsilon_n,\\
 \sqrt n\,b_n&\le y_n+x_n^2\le2\epsilon_n,\\
 \sqrt n\,c_n
 &=\frac{(\nu+\rho\sqrt{\log n})^2}{n\LK^2}
   =\frac{y_n^2}{\log n}\le\epsilon_n^2.
\end{aligned}
\]
These bounds imply \eqref{eq:rates-epsilon}--\eqref{eq:rates-small}. The other limits follow from $\rho\log n/\LK=x_n^2\to0$ and $a_n/(\sqrt n\LK)=y_n\to0$, since $\log n\ge1$ and $a_n\ge\nu\sqrt{\log n}$.
\end{proof}

\subsubsection{Block estimates and the sample eigengap}
\label{sec:block}
In the remainder of Section~\ref{sec:proofs}, assume \eqref{eq:main-condition}.

Since $\Ker(\mA)\cap\ve^\perp=\Imv(\mQ)$, $\mY_{\vmu}\mQ=\bm0$ and $\mQ\mY_{\vmu}^{\T}=\bm0$. Together with $\mQ\mJ_n=\mJ_n\mQ=\mQ$ and $\mJ_n\mV=\mV$, \eqref{eq:gram-decomposition} gives
\begin{equation}
 \mQ\mE\mQ=\mQ\mM_{\mSigma}\mQ.
 \label{eq:QEQ}
\end{equation}

\begin{lemma}[Scaled signal block]
\label{lem:signal-block-scaled}
Let \(\mF=\mV^{\T}\mE\mV\).
For every fixed \(B>0\), with probability \(1-O(n^{-B})\),
\begin{equation*}
 \|\mD^{-1}\mF\mD^{-1}\|_{\op}
 \le C_B\frac{\epsilon_n}{\sqrt n}=o(1).
\end{equation*}
\end{lemma}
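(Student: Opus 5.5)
We expand $\mF=\mV^{\T}\mE\mV$ using \eqref{eq:gram-decomposition}. Since $\mJ_n\mV=\mV$, we get
\[
 \mF=\mV^{\T}\mY_{\vmu}^{\T}\mG\mV+\mV^{\T}\mG^{\T}\mY_{\vmu}\mV+\mV^{\T}\mM_{\mSigma}\mV .
\]
With the compact SVD $\mY_{\vmu}=\mU\mD\mV^{\T}$ we have $\mY_{\vmu}\mV=\mU\mD$. Hence
\[
 \mD^{-1}\mF\mD^{-1}
 =\mU^{\T}\mG\mV\mD^{-1}+\mD^{-1}\mV^{\T}\mG^{\T}\mU
 +\mD^{-1}\mV^{\T}\mM_{\mSigma}\mV\mD^{-1}.
\]
The plan is to bound each of the three terms in operator norm on the intersection of $\mathcal E_B^{\mathrm{cnt}}$ and $\mathcal E_B^{\mathrm{noise}}$ from Proposition~\ref{prop:basic-event}. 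We then combine the conditional probability bound with Lemma~\ref{lem:count-concentration} to obtain probability $1-O(n^{-B})$ under the joint law.

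On the count event, Lemma~\ref{lem:signal-eigenvalues} gives $\lambda_r\ge cn\LK$. Therefore
\[
 \|\mD^{-1}\|_{\op}=\lambda_r^{-1/2}\le C(n\LK)^{-1/2}.
\]
For the two cross terms, which are transposes of each other, \eqref{eq:E-UGV} gives
\[
 \|\mU^{\T}\mG\mV\mD^{-1}\|_{\op}
 \le C_B\sqrt{\rho\log n}\,(n\LK)^{-1/2}
 = C_B n^{-1/2}\sqrt{\rho\log n/\LK}
 \le C_B\epsilon_n/\sqrt n .
\]
For the quadratic term, \eqref{eq:E-VMV} gives
\[
 \|\mD^{-1}\mV^{\T}\mM_{\mSigma}\mV\mD^{-1}\|_{\op}
 \le C_Ba_n/(n\LK)
 = n^{-1/2}\cdot a_n/(\sqrt n\LK)
 \le C_B\epsilon_n/\sqrt n .
\]
Adding the three bounds yields the claim. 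The bound is $o(1)$, and in fact $o(n^{-1/2})$, because $\epsilon_n\to0$ under \eqref{eq:main-condition} by Lemma~\ref{lem:rate-consequences}. When $\mSigma=\bm0$ we have $\mE=\bm0$, and the statement is trivial.

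The only delicate point is that $\mU$, $\mV$ and $\mD$ must be deterministic given the labels. This is needed so that \eqref{eq:E-UGV} and \eqref{eq:E-VMV}, which are proved for fixed bases conditionally on the labels, can be applied. It holds because the measurable SVD is fixed before the noise is drawn, and the noise is independent of the labels.

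The main step is the cross term. Using the scaled form with $\mD^{-1}$ on both sides is what ties the bound to $\LK$ only through $\lambda_r$. No condition on the ratio $\lambda_1/\lambda_r$ is needed, because $\mU^{\T}\mG\mV$ is controlled uniformly in operator norm by $\sqrt{\rho\log n}$ and does not depend on the signal magnitudes.
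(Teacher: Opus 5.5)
Your proposal is correct and matches the paper's proof essentially step for step. You use the same three-term decomposition via $\mY_{\vmu}\mV=\mU\mD$ and $\mJ_n\mV=\mV$, bound the cross terms by \eqref{eq:E-UGV} and the quadratic term by \eqref{eq:E-VMV}, and use $\|\mD^{-1}\|_{\op}\le C(n\LK)^{-1/2}$ from Lemma~\ref{lem:signal-eigenvalues} on the count event.
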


\begin{proof}
Using $\mY_{\vmu}\mV=\mU\mD$ and $\mJ_n\mV=\mV$,
\[
\begin{aligned}
\mD^{-1}\mF\mD^{-1}
&=\mU^{\T}\mG\mV\mD^{-1}
 +\mD^{-1}\mV^{\T}\mG^{\T}\mU\\
&\quad+\mD^{-1}\mV^{\T}\mM_{\mSigma}\mV\mD^{-1}.
\end{aligned}
\]
Lemma~\ref{lem:signal-eigenvalues} gives \(\|\mD^{-1}\|_{\op}=\lambda_r^{-1/2}\le C(n\LK)^{-1/2}\). By \eqref{eq:E-UGV} and \eqref{eq:E-VMV}, the first two terms are each bounded by \(C_B\sqrt{\rho\log n/(n\LK)}\) and the third by \(C_Ba_n/(n\LK)\). Their sum is at most $C_B\epsilon_n/\sqrt n$ by \eqref{eq:epsilon-main}.
\end{proof}

\begin{lemma}[Sample eigengap]
\label{lem:eig-lower}
Let $\widehat\lambda_1\ge\cdots\ge\widehat\lambda_{n-1}$ be the eigenvalues of $\widehat{\mA}$ restricted to $\ve^\perp$.
For every fixed \(B>0\), there are fixed constants \(c,c'>0\) and a deterministic sequence \(\varepsilon_{n,B}\to0\) such that, with probability \(1-O(n^{-B})\),
\begin{equation}
        \|\mQ\mM_{\mSigma}\mQ\|_{\op}
        \le \varepsilon_{n,B} n\LK,
        \qquad
        \widehat\lambda_r\ge cn\LK,
        \qquad
        \widehat\lambda_{r+1}\le \varepsilon_{n,B} n\LK,
        \qquad
        \widehat\lambda_r-\widehat\lambda_{r+1}\ge c'n\LK.
        \label{eq:eigengap}
\end{equation}
Only the upper bound on \(\widehat\lambda_{r+1}\) is used, not a lower bound on \(\widehat\lambda_{r+1}\).
\end{lemma}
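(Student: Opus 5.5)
We work on the intersection of $\mathcal E_B^{\mathrm{cnt}}$ and the event $\mathcal E_B^{\mathrm{noise}}$ of Proposition~\ref{prop:basic-event}; this intersection has probability $1-O(n^{-B})$. All eigenvalue statements concern $\widehat{\mA}$ restricted to $\ve^\perp$. This is legitimate because $\widehat{\mA}$ maps $\ve^\perp$ into itself, since $\mE$, $\mA$ and $\mJ_n$ all annihilate $\ve$.

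\emph{Step 1: the $\mQ$ block.} By \eqref{eq:QEQ} and $\|\mQ\|_{\op}\le1$, \eqref{eq:E-Mop} gives
\[
\|\mQ\mM_{\mSigma}\mQ\|_{\op}\le\|\mM_{\mSigma}\|_{\op}\le C_B(\nu\sqrt n+\rho n).
\]
Dividing by $n\LK$ leaves $C_B(\nu/(\sqrt n\LK)+\rho/\LK)$, which is $o(1)$ by \eqref{eq:direct-consequences}. We therefore take $\varepsilon_{n,B}$ to be a deterministic majorant of this quantity, enlarged slightly so that it also absorbs the cross terms in Step 3.

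\emph{Step 2: lower bound on $\widehat\lambda_r$.} We use Courant--Fischer with the $r$-dimensional test space $\Imv(\mV)$. For a unit vector $\vv=\mV\vb$,
\[
\vv^{\T}\widehat{\mA}\vv=\vb^{\T}(\mLambda+\mF)\vb=(\mD\vb)^{\T}\bigl(\mI_r+\mD^{-1}\mF\mD^{-1}\bigr)(\mD\vb).
\]
Lemma~\ref{lem:signal-block-scaled} gives $\|\mD^{-1}\mF\mD^{-1}\|_{\op}=o(1)$, so this is at least $(1-o(1))\|\mD\vb\|_2^2\ge(1-o(1))\lambda_r$. The scaled form matters here: it avoids any dependence on $\lambda_1/\lambda_r$. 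Lemma~\ref{lem:signal-eigenvalues} then gives $\widehat\lambda_r\ge cn\LK$.

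\emph{Step 3: upper bound on $\widehat\lambda_{r+1}$.} By Courant--Fischer, $\widehat\lambda_{r+1}\le\sup\{\vw^{\T}\widehat{\mA}\vw:\vw\in\ve^\perp,\ \|\vw\|_2=1,\ \vw\perp\mathcal S\}$ for any $r$-dimensional subspace $\mathcal S\subset\ve^\perp$. Taking $\mathcal S=\Imv(\mV)$ reduces the supremum to $\vw\in\Imv(\mQ)$. For such $\vw$ we have $\mA\vw=\bm0$, and the cross terms $\mY_{\vmu}^{\T}\mG\mJ_n$ in \eqref{eq:gram-decomposition} vanish on $\Imv(\mQ)$ on both sides because $\mY_{\vmu}\mQ=\bm0$. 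Hence $\vw^{\T}\widehat{\mA}\vw=\vw^{\T}\mQ\mM_{\mSigma}\mQ\vw\le\varepsilon_{n,B}n\LK$ by Step 1.

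\emph{Step 4: the gap.} Subtracting the two bounds gives
\[
\widehat\lambda_r-\widehat\lambda_{r+1}\ge(c-\varepsilon_{n,B})n\LK\ge c'n\LK
\]
for large $n$.

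The main point of care is Step 2. A naive Weyl bound $\|\mE\|_{\op}$ cannot work, because the cross term $\|\mY_{\vmu}^{\T}\mG\|_{\op}$ scales with $\sqrt{\lambda_1}$ rather than with $\lambda_r$. This is exactly why the argument uses the test space $\Imv(\mV)$ together with the scaled block bound.

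Step 3 needs no control of the off-diagonal block $\mV^{\T}\mE\mQ$. The reason is that the minimax principle only requires an upper bound on the quadratic form over some codimension-$r$ subspace of $\ve^\perp$, and $\Imv(\mQ)$ is such a subspace.
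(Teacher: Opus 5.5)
Your proposal is correct and follows the paper's proof essentially step for step. You bound $\|\mQ\mM_{\mSigma}\mQ\|_{\op}$ via \eqref{eq:E-Mop}, apply Courant--Fischer with the test space $\Imv(\mV)$ and the scaled bound of Lemma~\ref{lem:signal-block-scaled} to get $\widehat\lambda_r$, and apply Courant--Fischer over the codimension-$r$ subspace $\Imv(\mQ)$ with \eqref{eq:QEQ} to get $\widehat\lambda_{r+1}$. The only cosmetic point is that $\varepsilon_{n,B}$ needs no enlargement, since, as you show yourself in Step~3, the cross terms vanish on $\Imv(\mQ)$.
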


\begin{proof}
Fix \(B>0\) and intersect the count event with the event in Proposition~\ref{prop:basic-event}. Then
\begin{equation*}
\frac{\|\mQ\mM_{\mSigma}\mQ\|_{\op}}{n\LK}
\le C_B\left(\frac{\nu}{\sqrt n\LK}+\frac{\rho}{\LK}\right)
=: \varepsilon_{n,B}.
\end{equation*}
Here \(\varepsilon_{n,B}\to0\) by \eqref{eq:direct-consequences}, and the event has probability \(1-O(n^{-B})\).

For \(\vx\in\R^r\),
\[
\vx^{\T}\mV^{\T}\widehat{\mA}\mV\vx
=\vx^{\T}(\mLambda+\mF)\vx
=(\mD\vx)^{\T}
\{\mI_r+\mD^{-1}\mF\mD^{-1}\}(\mD\vx).
\]
By Lemma~\ref{lem:signal-block-scaled} and $\epsilon_n\to0$, this is at least \(\tfrac12\lambda_r\|\vx\|_2^2\) for large \(n\). Courant--Fischer on $\ve^\perp$, using the $r$-dimensional space \(\Imv(\mV)\), gives
\[
        \widehat\lambda_r
        \ge\inf_{\substack{\vu\in\Imv(\mV)\\\|\vu\|_2=1}}
        \vu^{\T}\widehat{\mA}\vu
        \ge\tfrac12\lambda_r\ge cn\LK.
\]
Since $\Imv(\mQ)$ has codimension $r$ in $\ve^\perp$, Courant--Fischer and \eqref{eq:QEQ} also give the $(r+1)$st eigenvalue bound
\[
\begin{aligned}
\widehat\lambda_{r+1}
&\le\sup_{\substack{\vu\in\Imv(\mQ)\\\|\vu\|_2=1}}
 \vu^{\T}\widehat{\mA}\vu\\
&=\sup_{\substack{\vu\in\Imv(\mQ)\\\|\vu\|_2=1}}
 \vu^{\T}\mQ\mM_{\mSigma}\mQ\vu\\
&\le\|\mQ\mM_{\mSigma}\mQ\|_{\op}
\le \varepsilon_{n,B}n\LK.
\end{aligned}
\]
For large \(n\), \(\varepsilon_{n,B}\le c/2\), so \(\widehat\lambda_r-\widehat\lambda_{r+1}\ge(c/2)n\LK\). Take $c'=c/2$. The leading unshifted eigenvalues are positive, so their eigenvectors lie in $\ve^\perp$ and span the leading eigenspace of $\mY^{\T}\mY$ on $\R^n$.
\end{proof}

\subsection{Uniform rowwise resolvent estimates}
\label{sec:rowwise-proof}
For related leave-one-out and Neumann series methods, see \citet{AbbeFanWangZhong2020,AbbeFanWang2022,ZhangZhou2024} and \citet{EldridgeBelkinWang2018}, respectively.

For brevity, write
\[
        \mM=\mM_{\mSigma},
        \qquad
        \mQ=\mJ_n-\mV\mV^{\T}=\mI_n-\mPi.
\]
Choose a sufficiently small fixed \(c_0>0\) and set
\begin{equation*}
        s_0=c_0n\LK.
\end{equation*}
By Lemma~\ref{lem:eig-lower}, \(c_0\) can be chosen so that, for every fixed \(B>0\), \(\widehat\lambda_r\ge s_0\) with probability \(1-O(n^{-B})\).

The following parameter sets depend only on the labels:
\begin{align*}
\mathcal A
&=\left\{
\va\in\Span(\mU):
\|\mSigma^{1/2}\va\|_2
\le C_0\sqrt{\frac{\rho}{n\LK}}
\right\},
\\
\mathcal V
&=\left\{
\vv\in\Span(\mW):
\|\vv\|_2\le\frac{C_0}{n\LK},
\quad
\|\vv\|_\infty\le\frac{C_0}{n^{3/2}\LK}
\right\},
\end{align*}
where $C_0$ is a sufficiently large fixed constant.
Since $\mG^{\T}$ annihilates $\Span(\mU)\cap\Ker(\mSigma^{1/2})$, parameterize $\mathcal A$ on the quotient, of dimension at most $r$.

\begin{lemma}[Uniform unprojected rowwise resolvent]
\label{lem:unprojected-resolvent}
Under \eqref{eq:main-condition}, for every fixed \(B>0\), with probability \(1-O(n^{-B})\),
\begin{equation*}
\sup_{\substack{s\ge s_0\\\va\in\mathcal A,\ \vv\in\mathcal V}}
\left\|
\left(\mI_n-\frac1s\mM\right)^{-1}
\{\mG^{\T}\va+\mM\vv\}
\right\|_\infty
\le C_B\frac{\epsilon_n}{\sqrt n}.
\end{equation*}
The bound is \(o(n^{-1/2})\).
\end{lemma}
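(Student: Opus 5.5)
Fix $s\ge s_0$, $\va\in\mathcal A$, $\vv\in\mathcal V$, and write the resolvent as a Neumann series in $\mM/s$. Put $\vw=\mG^{\T}\va+\mM\vv$. On the event of Proposition~\ref{prop:basic-event}, $\|\mM\|_{\op}\le C_B(\nu\sqrt n+\rho n)=o(n\LK)$ by \eqref{eq:direct-consequences}, so $\|\mM/s\|_{\op}\le1/2$ for large $n$ and
\[
\Bigl(\mI_n-\tfrac1s\mM\Bigr)^{-1}\vw=\vw+\tfrac1s\mM\Bigl(\mI_n-\tfrac1s\mM\Bigr)^{-1}\vw .
\]
I would bound the $i$th coordinate of each term separately and take a union bound over $i$. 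To make this uniform over $s$, $\va$, $\vv$, I would use two facts. First, $\va$ ranges over an at most $r$-dimensional quotient and $\vv$ over the $K$-dimensional space $\Span(\mW)$, so it suffices to control finitely many basis directions. Second, the dependence on $s$ enters only through $\|\mM/s\|_{\op}\le1/2$ and a rowwise leave-one-out argument, so it needs no net in $s$.

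\textbf{Zeroth-order term.} Write $\va=\sum_\ell\alpha_\ell\ve_\ell$ with $\sum\alpha_\ell^2=\|\mSigma^{1/2}\va\|_2^2\le C_0^2\rho/(n\LK)$. Then $(\mG^{\T}\va)_i=\sum_\ell\alpha_\ell\ip{\vq_\ell}{\vxi_i}$. By Lemma~\ref{lem:subg-tools}(i) and a union bound over $i$ and $\ell$, this is at most $C_B\sqrt{\rho\log n/(n\LK)}\le C_B\epsilon_n/\sqrt n$. For $\vv=\mW\vb$ with $\|\vb\|_2\le C_0/(n\LK)$, \eqref{eq:E-MV-row} gives $|(\mM\vv)_i|\le C_Ba_n/(n\LK)\le C_B\epsilon_n/\sqrt n$.

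\textbf{Remainder term.} The remainder's $i$th entry is $s^{-1}\ve_i^{\T}\mM\vz$, where $\vz=(\mI-\mM/s)^{-1}\vw$. A naive bound $\|\mM_{i*}\|_2\|\vz\|_2$ loses too much, so I would decouple row $i$ by leave-one-out. Split $\ve_i^{\T}\mM\vz=(\|\vg_i\|_2^2-\tr\mSigma)z_i+\vg_i^{\T}\mG_{-i}\vz_{-i}$. The first piece is at most $C_B(\nu\sqrt{\log n}+\rho\log n)|z_i|/s=o(|z_i|)$, and it is absorbed into the left side once I take the maximum over $i$ of $|z_i|$. For the second piece, compare $\vz_{-i}$ with the leave-one-out vector $\vz^{(i)}=(\mI-\mM^{(i)}/s)^{-1}\vw^{(i)}$, where $\vw^{(i)}$ replaces $\vg_i$ by its effect removed. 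This vector is $\mathcal F_i$-measurable, so conditionally on $\mathcal F_i$, Lemma~\ref{lem:subg-tools}(i) gives $|\vg_i^{\T}\mG_{-i}\vz^{(i)}|\le C_B\sqrt{\log n}\,\|\mSigma^{1/2}\mG_{-i}\vz^{(i)}\|_2$. Lemma~\ref{lem:loo-basic-event} then bounds this by $C_B\sqrt{\log n}(\nu+\rho\sqrt n)\|\vz^{(i)}\|_2$. With $\|\vz^{(i)}\|_2\lesssim\|\vw\|_2\lesssim\sqrt{\rho/\LK}+(\nu\sqrt n+\rho n)/(n\LK)$, this gives, after dividing by $s$, a contribution of order $b_n\eta_n\sqrt n/\sqrt n\lesssim\epsilon_n^2/\sqrt n$ by Lemma~\ref{lem:rate-consequences}. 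The difference $\vz_{-i}-\vz^{(i)}$ is driven by $\vg_i$ through a rank-two perturbation. Lemma~\ref{lem:loo-ri-norm} bounds its norm by $\|\mG_{-i}^{\T}\vg_i\|_2(|z_i|+|w\text{-terms}|)/s$, which yields a term of size $c_n$ times $\sqrt n$, plus $o(1)\cdot\max_i|z_i|$.

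\textbf{Main obstacle and closing the argument.} The hard step is this leave-one-out comparison, since $\vw$ itself contains $\vg_i$ through $\mG^{\T}\va$ and $\mM\vv$. Uniformity in the continuous parameters must be handled on the finite-dimensional parameterization, with conditional probabilities $n^{-B-2}$ before the union bound over $i$. Collecting the pieces gives $\max_i|z_i|\le C_B\epsilon_n/\sqrt n+o(1)\max_i|z_i|$, and rearranging yields the claim.
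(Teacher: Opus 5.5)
Your skeleton is the paper's. Put $\mR_i(s)=(s\mI_{n-1}-\mM^{(i)})^{-1}$ and $\vz^{(i)}=s\mR_i(s)\vw_{-i}^{(i)}$. Then, exactly, $\vz_{-i}=\vz^{(i)}+\mR_i(s)\vr_i(sv_i+z_i)$. Substituting this into $z_i=w_i+s^{-1}(d_iz_i+\vr_i^{\T}\vz_{-i})$ reproduces the Schur-complement formula \eqref{eq:schur-decomposed}, with the same three contributions $\|\vw\|_\infty$, $b_n\eta_n$ and $c_n$. Absorbing $d_i/s$ and $\vr_i^{\T}\mR_i\vr_i/s$ by direct norm bounds, instead of using \eqref{eq:denominator-lower}, is fine.

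Two bookkeeping points need correcting.
\begin{enumerate}
\item $\vw_{-i}$ depends on $\vg_i$ only through $v_i\vr_i$, because $(\mG^{\T}\va)_{-i}=\mG_{-i}^{\T}\va$.
\item The $v_i$-contribution is $2|v_i|\,\|\vr_i\|_2^2/s\le C_Bc_n$, not $\sqrt n\,c_n$. A term of size $\sqrt n\,c_n$ is only bounded by $\epsilon_n^2$ and would ruin the rate. The quantity $C_B\sqrt n\,c_n$ is instead the coefficient of $|z_i|$ that you absorb.
\end{enumerate}

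The genuine gap is uniformity in $s$ in the conditional step. Conditionally on $\mathcal F_i$, Lemma~\ref{lem:subg-tools}(i) controls $\vg_i^{\T}\mG_{-i}\vx$ only for finitely many $\mathcal F_i$-measurable vectors $\vx$. You correctly reduce $(\va,\vv)$ to the at most $r+K$ columns of an $\mathcal F_i$-measurable matrix $\mW_i$. However, $\{(\mI-\mM^{(i)}/s)^{-1}\mW_i:s\ge s_0\}$ is an uncountable family.

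Your justification (the bound $\|\mM/s\|_{\op}\le1/2$, hence no net in $s$) does not give uniformity. The only $s$-uniform bound it yields is Cauchy--Schwarz, $s^{-1}\|\vr_i\|_2\|\vz^{(i)}\|_2$. Its $\nu$-part is of order $\nu\eta_n/(\sqrt n\LK)$, which can exceed $\epsilon_n/\sqrt n$ by a polynomial factor, for example under isotropic noise with $p\gg n$. The uniformity is not cosmetic: the lemma is later applied with $s=\widehat\lambda_k$, $\va=\va_k$ and $\vv=\vv_k$, all of which depend on $\vg_i$.

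The paper supplies the missing step by expanding in the $s$-free, $\mathcal F_i$-measurable matrix $\mH_i=\mM^{(i)}/s_0$:
\begin{enumerate}
\item Write $\mR_i(s)=s_0^{-1}\sum_{j\ge0}(s_0/s)^{j+1}\mH_i^j$, whose coefficients lie in $(0,1]$.
\item Bound each $\vg_i^{\T}\mG_{-i}\mH_i^j\mW_i$ at a threshold proportional to $\sqrt{\log n+j+1}$, with conditional failure probability $n^{-B-2}2^{-j-1}$.
\item Sum over $j$ using $\|\mH_i\|_{\op}\le1/4$. This gives $C_Bb_n\eta_n$ simultaneously for all $s\ge s_0$.
\end{enumerate}
An alternative is a grid of polynomially many points in $u=1/s\in(0,1/s_0]$, combined with the Lipschitz bound $\|\partial_u(\mI-u\mM^{(i)})^{-1}\|_{\op}\le4\|\mM^{(i)}\|_{\op}$. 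Either device must be added for the proof to be complete.
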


\begin{proof}
The claim is immediate if $\mSigma=\bm0$, since $\mG=\mM=\bm0$. For $\mSigma\ne\bm0$, fix $B>0$ and work on the count event. Deterministic bounds use Proposition~\ref{prop:basic-event} and Lemmas~\ref{lem:loo-basic-event}--\ref{lem:loo-ri-norm}. Conditional tail bounds use only $\mathcal F_i$-measurable leave-one-out events. After integration, intersect the resulting events with the global events.

By \eqref{eq:E-Mop} and Lemma~\ref{lem:rate-consequences}, for all sufficiently large $n$,
\begin{equation}
 \sup_{s\ge s_0}\|\mM/s\|_{\op}\le\frac14,
 \qquad
 \sup_{s\ge s_0}
 \left\|\left(\mI_n-\frac{\mM}{s}\right)^{-1}\right\|_{\op}\le2.
 \label{eq:M-resolvent-op}
\end{equation}
For $s\ge s_0$, $\va\in\mathcal A$, and $\vv\in\mathcal V$, put
\[
 \vw=\mG^{\T}\va+\mM\vv,
 \qquad
 \vz=\left(\mI_n-\frac{\mM}{s}\right)^{-1}\vw.
\]
Thus
\begin{equation}
 (s\mI_n-\mM)\vz=s\vw.
 \label{eq:resolvent-equation}
\end{equation}

For the input bounds, work on the quotient of $\Span(\mU)$ by $\Span(\mU)\cap\Ker(\mSigma^{1/2})$. Choose $\mSigma$-orthonormal representatives $\vu_1,\ldots,\vu_{d_{\mSigma}}$ with $d_{\mSigma}\le r$. Conditional on the labels, Lemma~\ref{lem:subg-tools}(i) and a union bound over at most $nr$ choices give
\[
 \max_{i\le n}\sum_{\ell=1}^{d_{\mSigma}}
             |\vg_i^{\T}\vu_\ell|^2\le C_B\log n
\]
with probability $1-O(n^{-B})$. The sum is zero if $d_{\mSigma}=0$. The radius of $\mathcal A$ and Cauchy--Schwarz give
\[
 \sup_{\va\in\mathcal A}\|\mG^{\T}\va\|_\infty
 \le C_B\sqrt{\frac{\rho\log n}{n\LK}}.
\]
For $\vv=\mW\vbeta$, \eqref{eq:E-MV-row} gives $\|\mM\vv\|_\infty\le C_Ba_n/(n\LK)$ since $\|\vbeta\|_2\le C_0/(n\LK)$. Hence
\begin{equation}
 \sup_{\va\in\mathcal A,\,\vv\in\mathcal V}
 \|\vw\|_\infty\le C_B\frac{\epsilon_n}{\sqrt n}.
 \label{eq:w-infty}
\end{equation}
Similarly, \eqref{eq:E-GU-metric} and \eqref{eq:E-Mop} yield
\begin{equation}
 \sup_{\va\in\mathcal A,\,\vv\in\mathcal V}\|\vw\|_2
 \le C_B\left\{\sqrt{\frac{\rho}{\LK}}
       +\frac{\nu}{\sqrt n\LK}+\frac{\rho}{\LK}\right\}
 =C_B\eta_n.
 \label{eq:w-two}
\end{equation}

For the coordinatewise Schur complement, write
\[
 d_i=\|\vg_i\|_2^2-\tr(\mSigma),\qquad
 \vr_i=\mG_{-i}^{\T}\vg_i,\qquad
 \mR_i(s)=(s\mI_{n-1}-\mM^{(i)})^{-1}.
\]
Since $\mM^{(i)}$ is a principal submatrix of $\mM$,
\begin{equation}
 \|\mR_i(s)\|_{\op}\le\frac2s
 \qquad(i\le n,\ s\ge s_0).
 \label{eq:Ri-op}
\end{equation}
Eliminating $\vz_{-i}$ from \eqref{eq:resolvent-equation} gives
\begin{equation*}
 z_i=\frac{s\{w_i+\vr_i^{\T}\mR_i(s)\vw_{-i}\}}
             {s-d_i-\vr_i^{\T}\mR_i(s)\vr_i}.
\end{equation*}
Moreover, \eqref{eq:M-resolvent-op} gives $s\mI_n-\mM\succeq(3s/4)\mI_n$.
The block inversion formula therefore yields
\begin{equation}
 s-d_i-\vr_i^{\T}\mR_i(s)\vr_i
 =\{\ve_i^{\T}(s\mI_n-\mM)^{-1}\ve_i\}^{-1}
 \ge\frac{3s}{4}\ge\frac{s}{2}.
 \label{eq:denominator-lower}
\end{equation}

Separate the dependence on $\vg_i$ by writing
\[
 \vw_{-i}=\vw_{-i}^{(i)}+v_i\vr_i,
 \qquad
 \vw_{-i}^{(i)}=\mG_{-i}^{\T}\va+\mM^{(i)}\vv_{-i}.
\]
For fixed parameters, $\vw_{-i}^{(i)}$ is $\mathcal F_i$-measurable. Thus
\begin{equation}
 z_i=\frac{s\{w_i+\vr_i^{\T}\mR_i(s)\vw_{-i}^{(i)}
                       +v_i\vr_i^{\T}\mR_i(s)\vr_i\}}
             {s-d_i-\vr_i^{\T}\mR_i(s)\vr_i}.
 \label{eq:schur-decomposed}
\end{equation}
By the $\ell_\infty$ restriction in $\mathcal V$, \eqref{eq:Ri-op}, and \eqref{eq:ri-norm},
\[
 \sup_{i,s,\vv}|v_i\vr_i^{\T}\mR_i(s)\vr_i|
 \le\frac{C}{n^{3/2}\LK}\frac{C_Bn(\nu+\rho\sqrt{\log n})^2}{s_0}
 \le C_Bc_n.
\]

For the remaining term, fix $i$, condition on $\mathcal F_i$, and work on $\mathcal E_{i,B}^{\mathrm{loo}}$. Set $\mH_i=\mM^{(i)}/s_0$, so $\|\mH_i\|_{\op}\le1/4$ for large $n$, and define
\[
 \mT_a=C_0\sqrt{\frac{\rho}{n\LK}}
          (\vu_1,\ldots,\vu_{d_{\mSigma}}),
 \qquad
 \mT_v=\frac{C_0}{n\LK}\mW.
\]
Both maps depend only on the labels. Every $\va\in\mathcal A$ has a representative $\mT_a\valpha$ with $\|\valpha\|_2\le1$, and every $\vv\in\mathcal V$ equals $\mT_v\vbeta$ with $\|\vbeta\|_2\le1$. The kernel part of $\va$ is annihilated by $\mG_{-i}^{\T}$. Dropping the coordinate constraint on $\mathcal V$ enlarges the input set to
\[
 \vw_{-i}^{(i)}=\mW_i\vu,\qquad
 \mW_i=\bigl(\mG_{-i}^{\T}\mT_a,\,
                    \mM^{(i)}(\mT_v)_{-i}\bigr),\qquad
 \|\vu\|_2\le\sqrt2.
\]
Here $(\mT_v)_{-i}$ omits row $i$. The $\mathcal F_i$-measurable matrix $\mW_i$ has $d=d_{\mSigma}+K\le r+K$ columns and $\|\mW_i\|_{\op}\le C_B\eta_n$ by \eqref{eq:loo-Mop}--\eqref{eq:loo-GU-metric}. Thus, for $j\ge0$,
\[
 \|\mSigma^{1/2}\mG_{-i}\mH_i^j\mW_i\|_{\op}
 \le C_B(\nu+\rho\sqrt n)\eta_n4^{-j}.
\]
Conditional on $\mathcal F_i$, apply Lemma~\ref{lem:subg-tools}(i) to the at most $r+K$ columns with thresholds proportional to $\sqrt{\log n+j+1}$. The fixed-$j$ failure probability is at most $n^{-B-2}2^{-j-1}$. A union bound over $j\ge0$ gives, with conditional probability at least $1-n^{-B-2}$,
\[
 \|\vg_i^{\T}\mG_{-i}\mH_i^j\mW_i\|_2
 \le C_B(\nu+\rho\sqrt n)\eta_n4^{-j}
              \sqrt{\log n+j+1}
 \qquad(j\ge0).
\]
The Neumann series
\[
 \mR_i(s)=\frac1{s_0}\sum_{j=0}^{\infty}
                 (s_0/s)^{j+1}\mH_i^j
\]
converges uniformly in operator norm for $s\ge s_0$.
Hence, on the same event,
\begin{align*}
 \sup_{s\ge s_0,\,\va\in\mathcal A,\,\vv\in\mathcal V}
 |\vr_i^{\T}\mR_i(s)\vw_{-i}^{(i)}|
 &\le\frac{C_B(\nu+\rho\sqrt n)\eta_n}{s_0}
       \sum_{j=0}^{\infty}4^{-j}\sqrt{\log n+j+1}\\
 &\le C_Bb_n\eta_n.
\end{align*}
Integrating, taking a union bound over $i$, and applying Lemma~\ref{lem:loo-basic-event} gives
\begin{equation}
 \sup_{\substack{i\le n,\,s\ge s_0\\
                   \va\in\mathcal A,\,\vv\in\mathcal V}}
 |\vr_i^{\T}\mR_i(s)\vw_{-i}^{(i)}|
 \le C_Bb_n\eta_n
 \label{eq:main-loo-term}
\end{equation}
with probability $1-O(n^{-B})$.
Uniformity allows substitution of the random parameters used below.

Finally, \eqref{eq:denominator-lower} and \eqref{eq:schur-decomposed} imply
\[
 |z_i|\le2|w_i|+2|\vr_i^{\T}\mR_i(s)\vw_{-i}^{(i)}|
                   +2|v_i\vr_i^{\T}\mR_i(s)\vr_i|.
\]
Intersecting the events and combining \eqref{eq:w-infty}, \eqref{eq:main-loo-term}, and the correction bound gives, uniformly in $s$, $\va$, and $\vv$,
\[
 \|\vz\|_\infty
 \le C_B\left\{\frac{\epsilon_n}{\sqrt n}+b_n\eta_n+c_n\right\}
 \le C_B\frac{\epsilon_n+\epsilon_n^2}{\sqrt n}
 \le C_B\frac{\epsilon_n}{\sqrt n}.
\]
The last two bounds use \eqref{eq:rates-small} and $\epsilon_n\to0$.
\end{proof}

\begin{lemma}[Forcing in the membership space]
\label{lem:group-forcing}
Under \eqref{eq:main-condition}, let \(0\le\eta\le1\) be deterministic.
For every fixed \(B>0\) and all sufficiently large $n$, with probability \(1-O(n^{-B})\),
\begin{equation*}
\sup_{\substack{s\ge s_0\\
\vq\in\Span(\mW),\ \|\vq\|_2\le\eta}}
\left\|
\left(\mI_n-\frac1s\mM\right)^{-1}\vq
\right\|_\infty
\le C\frac{\eta}{\sqrt n}.
\end{equation*}
The bound is \(o(n^{-1/2})\) if \(\eta=o(1)\).
\end{lemma}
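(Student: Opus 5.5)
Write \(\vz=(\mI_n-\mM/s)^{-1}\vq\). The plan is to use the resolvent identity
\[
\vz=\vq+\frac1s\mM\vz=\vq+\frac1s\mM\vq+\frac1{s^2}\mM\Bigl(\mI_n-\frac{\mM}{s}\Bigr)^{-1}\mM\vq,
\]
or, more directly, to reduce to Lemma~\ref{lem:unprojected-resolvent}. Note that
\[
(\mI_n-\mM/s)^{-1}\vq=\vq+(\mI_n-\mM/s)^{-1}\mM(\vq/s).
\]
For the first term, \(\vq=\mW\vbeta\) with \(\|\vbeta\|_2=\|\vq\|_2\le\eta\), because \(\mW\) has orthonormal columns. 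Hence \(\|\vq\|_\infty\le\|\mW\|_{2\to\infty}\|\vbeta\|_2\le C\eta/\sqrt n\) on the count event, by the remark following Proposition~\ref{prop:population-structure}. This term is deterministic given the labels and produces the main \(C\eta/\sqrt n\) contribution.

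For the second term, set \(\vv=\vq/s\in\Span(\mW)\). Since \(s\ge s_0=c_0n\LK\), we have \(\|\vv\|_2\le\eta/(c_0n\LK)\le C_0/(n\LK)\) and \(\|\vv\|_\infty\le C\eta/(c_0n^{3/2}\LK)\le C_0/(n^{3/2}\LK)\), provided \(C_0\) is chosen large relative to \(1/c_0\) and the row bound constant (and \(\eta\le1\)). Thus \(\vv\in\mathcal V\). Taking \(\va=\bm0\in\mathcal A\), Lemma~\ref{lem:unprojected-resolvent} gives, uniformly over \(s\ge s_0\) and all such \(\vv\), a bound of \(C_B\epsilon_n/\sqrt n\) with probability \(1-O(n^{-B})\). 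The uniformity over \(\mathcal V\) in that lemma handles the supremum over \(\vq\) and \(s\) simultaneously.

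The main obstacle is that this only gives \(C\eta/\sqrt n+C_B\epsilon_n/\sqrt n\), whereas the statement asks for a bound proportional to \(\eta\). I would fix this by scaling. When \(\eta>0\), apply the lemma to \(\vv'=\vq/(s\eta)\), which still lies in \(\mathcal V\). The map \(\vv\mapsto(\mI_n-\mM/s)^{-1}\mM\vv\) is linear, so the second term is at most \(\eta\,C_B\epsilon_n/\sqrt n\le C\eta/\sqrt n\) once \(\epsilon_n\le1\). Absorbing \(C_B\epsilon_n\) into the constant for large \(n\) gives the clean bound \(C\eta/\sqrt n\). The case \(\eta=0\) is trivial. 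The probability is \(1-O(n^{-B})\) after intersecting with the count event of Lemma~\ref{lem:count-concentration}, and \(o(n^{-1/2})\) follows when \(\eta=o(1)\).
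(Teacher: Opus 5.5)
Your proposal is correct and follows the paper's proof essentially step for step. Both split $(\mI_n-\mM/s)^{-1}\vq=\vq+(\mI_n-\mM/s)^{-1}\mM(\vq/s)$, bound $\|\vq\|_\infty$ via $\|\mW\|_{2\to\infty}\le C/\sqrt n$ on the count event, and apply Lemma~\ref{lem:unprojected-resolvent} with $\va=\bm0$ to the rescaled vector $\vq/(s\eta)\in\mathcal V$, so that the error carries a factor $\eta$ and the term $C_B\epsilon_n$ is absorbed into the constant for large $n$.
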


\begin{proof}
If $\eta=0$, the claim is immediate. For $\eta>0$, set $\mR_s=(\mI_n-\mM/s)^{-1}$. Then
\[
 \mR_s\vq=\vq+\mR_s\mM(\vq/s).
\]
On the count event, $\|\mW\|_{2\to\infty}\le C/\sqrt n$ gives $\|\vq\|_\infty\le C\eta/\sqrt n$. Since $s\ge c_0n\LK$, large enough $C_0$ ensures $\vq/(\eta s)\in\mathcal V$ for $\vq\in\Span(\mW)$ with $\|\vq\|_2\le\eta$. Apply Lemma~\ref{lem:unprojected-resolvent} with $\va=\bm0$, uniformly in $s$ and $\vq$, to obtain
\[
 \|\mR_s\vq\|_\infty
 \le\frac{C\eta}{\sqrt n}
       +C_B\frac{\eta\epsilon_n}{\sqrt n}.
\]
Since $\epsilon_n\to0$, for fixed $B$ and large $n$ this is at most $C'\eta/\sqrt n$.
\end{proof}

\begin{theorem}[Projected uniform rowwise resolvent]
\label{thm:projected-resolvent}
Under \eqref{eq:main-condition}, for every fixed \(B>0\), with probability \(1-O(n^{-B})\),
\begin{equation*}
\sup_{\substack{s\ge s_0\\\va\in\mathcal A,\ \vv\in\mathcal V}}
\left\|
\left(\mI_n-\frac1s\mQ\mM\mQ\right)^{-1}
\mQ\{\mG^{\T}\va+\mM\vv\}
\right\|_\infty
\le C_B\frac{\epsilon_n}{\sqrt n}.
\end{equation*}
The bound is \(o(n^{-1/2})\).
\end{theorem}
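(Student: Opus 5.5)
Our plan is to reduce the projected resolvent to the unprojected one of Lemma~\ref{lem:unprojected-resolvent}, with the discrepancy confined to the $K$-dimensional membership space $\Span(\mW)$. That discrepancy will then be controlled by Lemma~\ref{lem:group-forcing}.

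Recall that $\mQ=\mI_n-\mPi$ with $\mPi=\mW\mW^{\T}$ of rank $K$. For $s\ge s_0$, $\va\in\mathcal A$ and $\vv\in\mathcal V$, put $\vw=\mG^{\T}\va+\mM\vv$ and let $\vz=(\mI_n-\mQ\mM\mQ/s)^{-1}\mQ\vw$. Since $\|\mQ\mM\mQ/s\|_{\op}\le1/4$, the inverse exists and the Neumann series shows $\vz\in\Imv(\mQ)$. Hence $\mQ\vz=\vz$, and $\vz$ solves
\[
 (s\mI_n-\mM)\vz=s\mQ\vw-\mPi\mM\vz .
\]
Here we used $\mQ\mM\vz=\mM\vz-\mPi\mM\vz$. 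Writing $\mR_s=(\mI_n-\mM/s)^{-1}$, this becomes
\[
 \vz=\mR_s\vw-\mR_s\mPi\vw-\mR_s\mPi\mM(\vz/s).
\]
The first term is bounded by $C_B\epsilon_n/\sqrt n$ directly by Lemma~\ref{lem:unprojected-resolvent}. The remaining two terms are $\mR_s\vq$ with $\vq=\mPi\vw+\mPi\mM\vz/s\in\Span(\mW)$. By Lemma~\ref{lem:group-forcing}, it suffices to show $\|\vq\|_2\le\eta$ for some deterministic $\eta=C_B\epsilon_n$, uniformly in the parameters.

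\textbf{Bounding $\mPi\vw$.} We have $\|\mW^{\T}\mG^{\T}\va\|_2\le\|\mG^{\T}\va\|_2$, but this is too crude, since it only gives $\eta_n$. Instead we use the structure $\mPi=\mP+\ve\ve^{\T}$. For the $\mP$ part, $\mV^{\T}\mG^{\T}\va$ with $\va=\mU\valpha$ is controlled by \eqref{eq:E-UGV}, giving $\sqrt{\rho\log n}\,\|\valpha\|_2$ up to the metric conversion on $\Span(\mU)$. This conversion is the delicate point: the set $\mathcal A$ is defined through $\|\mSigma^{1/2}\va\|_2$ rather than $\|\valpha\|_2$. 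We would redo the argument of \eqref{eq:E-UGV} in the $\mSigma$-orthonormal basis $\vu_\ell$, applying Lemma~\ref{lem:subg-tools}(i) to $\vu_\ell^{\T}\mG\vv$ for $\vv$ in a net of $\Span(\mW)$. This yields
\[
 \|\mPi\mG^{\T}\va\|_2\le C_B\sqrt{\log n}\,\sqrt{\rho/(n\LK)}\le C_B\epsilon_n/\sqrt n .
\]
The same Hanson--Wright argument on $\mW^{\T}\mM\mW$, as for \eqref{eq:E-VMV} but over the $K$ columns of $\mW$, gives $\|\mW^{\T}\mM\mW\|_{\op}\le C_Ba_n$. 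Hence
\[
 \|\mPi\mM\vv\|_2\le C_Ba_n/(n\LK)\le C_B\epsilon_n/\sqrt n .
\]
Therefore $\|\mPi\vw\|_2\le C_B\epsilon_n$, with room to spare.

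\textbf{Bounding $\mPi\mM\vz/s$.} This is the main obstacle, because $\vz$ is itself the unknown. Since $\vz=\mQ\vz$, we have $\mW^{\T}\mM\vz=\mW^{\T}\mM\mQ\vz$. A crude estimate gives
\[
 \|\mW^{\T}\mM\mQ\|_{\op}/s\le\|\mM\|_{\op}/s=O(\nu/(\sqrt n\LK)+\rho/\LK),
\]
which is $o(1)$ but not necessarily $O(\epsilon_n)$. Combined with $\|\vz\|_2\le2\|\vw\|_2\le C_B\eta_n\le C_B\epsilon_n$ from \eqref{eq:w-two}, this nevertheless yields
\[
 \|\mPi\mM\vz/s\|_2\le o(1)\cdot C_B\epsilon_n\le C_B\epsilon_n .
\]
This suffices. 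So I would take $\eta=C_B\epsilon_n$ (deterministic, at most $1$ for large $n$) and apply Lemma~\ref{lem:group-forcing}, obtaining $\|\mR_s\vq\|_\infty\le C\eta/\sqrt n=C_B\epsilon_n/\sqrt n$.

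Uniformity over $s$, $\va$ and $\vv$ holds because the lemmas invoked are uniform, and the inputs $\vq$ range over a ball of deterministic radius in $\Span(\mW)$. Intersecting the finitely many events, each of probability $1-O(n^{-B})$, then gives the claim.
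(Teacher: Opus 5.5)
Your proposal is correct and follows the paper's proof. Like the paper, you rewrite the projected resolvent as $(\mI_n-\mM/s)^{-1}(\vw-\vq)$ with $\vq=\mPi(\vw+\mM\vz/s)\in\Span(\mW)$, bound the $\vw$ term by Lemma~\ref{lem:unprojected-resolvent}, and control the $\vq$ term by Lemma~\ref{lem:group-forcing} with a deterministic radius.

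The one difference is your refined estimate of $\mPi\vw$, which is valid but unnecessary. The crude bound $\|\mPi\vw\|_2\le\|\vw\|_2\le C_B\eta_n$ already suffices, because $\eta_n\le2\epsilon_n$ by \eqref{eq:rates-epsilon}, which is exactly what you use later for $\|\vz\|_2$. So the two extra concentration events (for $\mW^{\T}\mG^{\T}\va$ and $\mW^{\T}\mM\mW$) can be dropped.
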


\begin{proof}
Fix \(B>0\). Intersect the events of probability \(1-O(n^{-B})\) in Lemmas~\ref{lem:unprojected-resolvent}--\ref{lem:group-forcing} with the concentration event giving \eqref{eq:M-resolvent-op} and \eqref{eq:w-two}. The resulting probability is \(1-O(n^{-B})\).
Put \( \vw=\mG^{\T}\va+\mM\vv \) and
\[
\vh=
\left(\mI_n-\frac1s\mQ\mM\mQ\right)^{-1}\mQ\vw.
\]
By \eqref{eq:M-resolvent-op}, \(\|\mQ\mM\mQ/s\|_{\op}\le1/4\), so
\begin{equation}
        \|\vh\|_2\le2\|\vw\|_2\le C_B\eta_n
        \label{eq:h-two}
\end{equation}
by \eqref{eq:w-two}. Also,
\[
        \vh=\mQ\vw+\frac1s\mQ\mM\mQ\vh.
\]
Its right-hand side lies in \(\Imv(\mQ)\), so \(\vh=\mQ\vh\). Thus
\[
\vh=\mQ\left(\vw+\frac1s\mM\vh\right)
=\vw-\vq+\frac1s\mM\vh,
\]
where
\begin{equation*}
        \vq=\mPi\left(\vw+\frac1s\mM\vh\right)
        \in\Span(\mW).
\end{equation*}
Consequently,
\begin{equation}
        \vh=
        \left(\mI_n-\frac1s\mM\right)^{-1}(\vw-\vq).
        \label{eq:projected-to-unprojected}
\end{equation}
Moreover, by \eqref{eq:w-two}, \eqref{eq:h-two}, and \(\|\mM/s\|_{\op}\le1/4\),
\begin{equation}
        \|\vq\|_2
        \le\|\vw\|_2+\frac1s\|\mM\|_{\op}\|\vh\|_2
        \le C_B\eta_n.
        \label{eq:q-correction-norm}
\end{equation}
Lemma~\ref{lem:unprojected-resolvent} bounds the $\vw$ term in \eqref{eq:projected-to-unprojected} by $C_B\epsilon_n/\sqrt n$. For large $n$, $C_B\eta_n\le1$. Apply Lemma~\ref{lem:group-forcing} with deterministic radius $\eta=C_B\eta_n$ to \eqref{eq:q-correction-norm}. Its uniformity allows the random $\vq$, yielding
\[
 \|\vh\|_\infty
 \le C_B\frac{\epsilon_n+\eta_n}{\sqrt n}
 \le C_B\frac{\epsilon_n}{\sqrt n},
\]
where \eqref{eq:rates-epsilon} gives the last inequality.
\end{proof}

\subsection{Proofs of the recovery results}
\label{sec:recovery-proofs}
\subsubsection{Control of the orthogonal component}
\label{sec:sample-eigenvectors}

For the next two proofs, choose orthonormal eigenvectors
\[
 \widehat{\mV}=(\widehat{\vv}_1,\ldots,\widehat{\vv}_r)\in\R^{n\times r}
\]
of $\widehat{\mA}|_{\ve^\perp}$ associated with $\widehat\lambda_1\ge\cdots\ge\widehat\lambda_r$.
Lemma~\ref{lem:eig-lower} gives separation with probability $1-O(n^{-B})$ for every fixed $B>0$. Basis invariance follows in Lemma~\ref{lem:QhatV-row}.

\begin{lemma}[Admissibility of the random parameters]
\label{lem:random-params}
For \(k=1,\ldots,r\), put
\[
        s_k=\widehat\lambda_k,
        \qquad
        \vo_k=\mV^{\T}\widehat{\vv}_k.
\]
On the event that all groups are nonempty and $s_k>0$, define
\[
        \va_k=\frac{\mU\mD\vo_k}{s_k},
        \qquad
        \vv_k=\frac{\mV\vo_k}{s_k};
\]
on its complement, set $\va_k=\bm0\in\R^p$ and $\vv_k=\bm0\in\R^n$.
For every fixed \(B>0\), with probability \(1-O(n^{-B})\), simultaneously for \(k=1,\ldots,r\),
\[
        s_k\ge s_0,
        \qquad
        \va_k\in\mathcal A,
        \qquad
        \vv_k\in\mathcal V.
\]
\end{lemma}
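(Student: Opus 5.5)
We intersect the count event $\mathcal E_B^{\mathrm{cnt}}$ of Lemma~\ref{lem:count-concentration} with the event of Lemma~\ref{lem:eig-lower}. On this intersection all groups are nonempty, and $s_k=\widehat\lambda_k\ge\widehat\lambda_r\ge cn\LK$ for $k\le r$. Choosing $c_0\le c$ in the definition of $s_0$ (as already arranged right after that definition) gives $s_k\ge s_0>0$. The random vectors $\va_k,\vv_k$ are therefore given by the nondegenerate formulas, and it remains to check the three norm constraints defining $\mathcal A$ and $\mathcal V$. Throughout we use only $\|\vo_k\|_2\le\|\widehat{\vv}_k\|_2=1$, since $\mV$ has orthonormal columns. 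The probability is $1-O(n^{-B})$ by the union bound.

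For $\va_k$, membership in $\Span(\mU)$ is immediate. We then bound
\[
 \|\mSigma^{1/2}\va_k\|_2\le\frac{\|\mSigma^{1/2}\|_{\op}\|\mD\|_{\op}}{s_k}\le\frac{\sqrt{\rho}\,\lambda_1^{1/2}}{c_0n\LK}.
\]
The obstacle is that $\lambda_1$ may be much larger than $n\LK$; we do not assume a bounded condition number. The crude bound is therefore insufficient. We instead use the eigenvector equation to exploit the factor $\mD$ against $s_k$. From $\widehat{\mA}\widehat{\vv}_k=s_k\widehat{\vv}_k$ and $\widehat{\mA}=\mA+\mE$, apply $\mV^{\T}$ to obtain $\mLambda\vo_k+\mV^{\T}\mE\widehat{\vv}_k=s_k\vo_k$, and hence $(s_k\mI-\mLambda)\vo_k=\mV^{\T}\mE\widehat{\vv}_k$.

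We plan to bound instead $\|\mD\vo_k\|_2^2=\vo_k^{\T}\mLambda\vo_k$ via the Rayleigh quotient. Since $\widehat{\vv}_k^{\T}\widehat{\mA}\widehat{\vv}_k=s_k$, the split $\widehat{\vv}_k=\mV\vo_k+\mQ\widehat{\vv}_k$ gives
\[
 \vo_k^{\T}\mLambda\vo_k=s_k-\widehat{\vv}_k^{\T}\mE\widehat{\vv}_k\le s_k+\|\mE\|_{\op}.
\]
We then bound $\|\mE\|_{\op}$ by $2\|\mY_{\vmu}^{\T}\mG\mJ_n\|_{\op}+\|\mM_{\mSigma}\|_{\op}$. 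The first term is at most $2\lambda_1^{1/2}\|\mU^{\T}\mG\|$, which reintroduces $\lambda_1$, so a cleaner route is needed. We take Weyl's inequality $s_k\ge\lambda_k-\|\mE\|$ in relative form: each signal direction $j$ with $\lambda_j\ge s_k$ is controlled by $\lambda_j\lesssim s_k$ along $\vo_k$. We handle this through the scaled block $\mD^{-1}\mF\mD^{-1}$ of Lemma~\ref{lem:signal-block-scaled}, writing $\mD\vo_k=\mD\mV^{\T}\widehat{\vv}_k$ and using $\mD\mV^{\T}=\mU^{\T}\mY_{\vmu}$. Then
\[
 \mU\mD\vo_k=\mU\mU^{\T}\mY_{\vmu}\widehat{\vv}_k=\mU\mU^{\T}(\mY-\mG\mJ_n)\widehat{\vv}_k.
\]
Here $\|\mY\widehat{\vv}_k\|_2^2=\widehat{\vv}_k^{\T}\mY^{\T}\mY\widehat{\vv}_k=s_k+\tr(\mSigma)$ is the unshifted eigenvalue. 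This is the step we expect to be hardest, since $\tr(\mSigma)$ may dominate. To avoid it we apply $\mSigma^{1/2}$ first: $\|\mSigma^{1/2}\mU\mU^{\T}\mG\mJ_n\widehat{\vv}_k\|_2\le\|\mSigma^{1/2}\mG\|_{\op}\le C_B(\nu+\rho\sqrt n)$ by \eqref{eq:E-SGop}. Dividing by $s_k\gtrsim n\LK$ gives a contribution $\lesssim\sqrt{\rho/(n\LK)}\cdot(\nu/\sqrt{n\rho\LK}+\sqrt{\rho/\LK})$, which is $o(\sqrt{\rho/(n\LK)})$ whenever $\nu^2\lesssim\rho\, n\LK$. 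For the main part $\mU\mU^{\T}\mY\widehat{\vv}_k$, we use $\mY^{\T}\mU\mU^{\T}\mY\preceq\mY^{\T}\mY$ restricted to the signal subspace, together with the decomposition of $\mY$; this yields the norm $\lesssim\sqrt{s_k}$ and hence $\|\mSigma^{1/2}\va_k\|_2\le\sqrt{\rho}\,\sqrt{s_k}/s_k\le C_0\sqrt{\rho/(n\LK)}$. We will verify these two pieces carefully and enlarge $C_0$ as needed.

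For $\vv_k=\mV\vo_k/s_k$, we have $\vv_k\in\Span(\mV)\subseteq\Span(\mW)$ and $\|\vv_k\|_2\le1/s_0=C_0'/(n\LK)$. For the sup-norm, $\|\mV\vo_k\|_\infty\le\|\mV\|_{2\to\infty}\|\vo_k\|_2\le Cn^{-1/2}$ by Proposition~\ref{prop:population-structure}, so $\|\vv_k\|_\infty\le C/(n^{3/2}\LK)$. Taking $C_0$ large completes the proof.
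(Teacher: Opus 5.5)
Your arguments for $s_k\ge s_0$ and for $\vv_k\in\mathcal V$ are correct and match the paper's. The gap is in $\va_k\in\mathcal A$.

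\textbf{The missing estimate.} Since $\|\mSigma^{1/2}\va_k\|_2\le\sqrt\rho\,\|\mD\vo_k\|_2/s_k$, everything hinges on $\|\mD\vo_k\|_2^2\le Cs_k$, and you never establish this.

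Your final route writes $\mU\mD\vo_k=\mY_{\vmu}\widehat{\vv}_k=\mU\mU^{\T}\mY\widehat{\vv}_k-\mU\mU^{\T}\mG\mJ_n\widehat{\vv}_k$. By \eqref{eq:E-GU-metric}, $\|\mU^{\T}\mG\|_{\op}\le C\sqrt{n\rho}=o(\sqrt{s_k})$. So the bound $\|\mU^{\T}\mY\widehat{\vv}_k\|_2\lesssim\sqrt{s_k}$ that you need for the ``main part'' is equivalent to the claim itself, and the reduction is circular.

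The justification you give, $\mY^{\T}\mU\mU^{\T}\mY\preceq\mY^{\T}\mY$, only yields $\|\mU^{\T}\mY\widehat{\vv}_k\|_2^2\le s_k+\tr(\mSigma)$. But $\tr(\mSigma)$ is not $O(n\LK)$ under \eqref{eq:main-condition}. For $\mSigma=\sigma^2\mI_p$ with $p/(n\log n)\to\infty$, the condition allows $n\LK\asymp\omega_n\sigma^2\sqrt{pn\log n}$ with $\omega_n\to\infty$ arbitrarily slowly, which is $o(\sigma^2p)$. This is exactly the obstruction you flagged, and the $\preceq$ step discards the very term that would cancel it. The sentence on ``Weyl's inequality in relative form'' is not an argument.

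\textbf{The noise piece.} The inequality $\|\mSigma^{1/2}\mU\mU^{\T}\mG\mJ_n\widehat{\vv}_k\|_2\le\|\mSigma^{1/2}\mG\|_{\op}$ is not valid as written, because the projection sits between $\mSigma^{1/2}$ and $\mG$. The side condition $\nu^2\lesssim\rho n\LK$ it leads you to is not implied by \eqref{eq:main-condition}; the same example violates it. Using $\sqrt\rho\,\|\mU^{\T}\mG\|_{\op}\le C\rho\sqrt n$ instead makes this piece $o(\sqrt{\rho/(n\LK)})$ with no extra assumption.

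\textbf{How the paper closes the gap.} The cross term carries $\lambda_1^{1/2}$, so it must never be bounded in operator norm. The paper eliminates it. Put $\vh_k=\mQ\widehat{\vv}_k$, $\mF=\mV^{\T}\mE\mV$ and $\mC=\mQ\mE\mV$. Since $s_k>\|\mQ\mM\mQ\|_{\op}$ by Lemma~\ref{lem:eig-lower}, the bottom block equation gives $\vh_k=(s_k\mI_n-\mQ\mM\mQ)^{-1}\mC\vo_k$. Substituting this into the top block and multiplying by $\vo_k^{\T}$ yields
\[
 \|\mD\vo_k\|_2^2+\vo_k^{\T}\mF\vo_k+\vo_k^{\T}\mC^{\T}(s_k\mI_n-\mQ\mM\mQ)^{-1}\mC\vo_k=s_k\|\vo_k\|_2^2 .
\]
The third term is nonnegative, so its size is irrelevant. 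Lemma~\ref{lem:signal-block-scaled} gives $|\vo_k^{\T}\mF\vo_k|\le\tfrac12\|\mD\vo_k\|_2^2$, hence $\|\mD\vo_k\|_2^2\le2s_k$.

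\textbf{Your first attempt was one step away.} In $\vo_k^{\T}\mLambda\vo_k=s_k-\widehat{\vv}_k^{\T}\mE\widehat{\vv}_k$, the cross term equals $2(\mD\vo_k)^{\T}\mU^{\T}\mG\widehat{\vv}_k$. Keep the factor $\|\mD\vo_k\|_2$ rather than bounding by $\|\mE\|_{\op}$. This gives the self-bounding inequality $\|\mD\vo_k\|_2^2\le s_k+2C\sqrt{n\rho}\,\|\mD\vo_k\|_2+\|\mM_{\mSigma}\|_{\op}$. Together with \eqref{eq:E-Mop} and \eqref{eq:E-GU-metric}, this again yields $\|\mD\vo_k\|_2^2\le Cs_k$.
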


\begin{proof}
Fix \(B>0\) and intersect the count event with the events in Lemmas~\ref{lem:eig-lower} and~\ref{lem:signal-block-scaled}. With probability \(1-O(n^{-B})\), the eigenvalue bound and Proposition~\ref{prop:population-structure} apply. Since \(\|\vo_k\|_2\le1\),
\[
        \|\vv_k\|_2
        \le\frac{1}{s_0}\le\frac{C}{n\LK},
\]
and Proposition~\ref{prop:population-structure} gives
\[
        \|\vv_k\|_\infty
        \le\frac{\|\mV\|_{2\to\infty}\|\vo_k\|_2}{s_0}
        \le\frac{C}{n^{3/2}\LK}.
\]
Thus \(\vv_k\in\mathcal V\) for large enough \(C_0\).

For \(\va_k\), put $\vh_k=\mQ\widehat{\vv}_k$. Since $\widehat{\vv}_k\in\ve^\perp$, $\widehat{\vv}_k=\mV\vo_k+\vh_k$. The eigenvalue equation on \(\Imv(\mV)\oplus\Imv(\mQ)\) is
\begin{align*}
(\mLambda+\mF)\vo_k+\mC^{\T}\vh_k
&=s_k\vo_k,\\
\mC\vo_k+\mQ\mM\mQ\vh_k
&=s_k\vh_k,
\end{align*}
where \(\mF=\mV^{\T}\mE\mV\) and \(\mC=\mQ\mE\mV\).
By \eqref{eq:eigengap}, for large \(n\), \(s_k>\|\mQ\mM\mQ\|_{\op}\), so
\[
        \vh_k=(s_k\mI_n-\mQ\mM\mQ)^{-1}\mC\vo_k.
\]
Substitution into the first block equation yields
\begin{equation*}
\left\{
\mLambda+\mF+
\mC^{\T}(s_k\mI_n-\mQ\mM\mQ)^{-1}\mC
\right\}\vo_k=s_k\vo_k.
\end{equation*}
Multiplying by \(\vo_k^{\T}\) gives
\begin{equation*}
\begin{split}
\|\mD\vo_k\|_2^2
+\vo_k^{\T}\mF\vo_k
+\vo_k^{\T}\mC^{\T}
(s_k\mI_n-\mQ\mM\mQ)^{-1}\mC\vo_k
=s_k\|\vo_k\|_2^2.
\end{split}
\end{equation*}
Since \(s_k>\|\mQ\mM\mQ\|_{\op}\), the inverse is positive definite and the third term is nonnegative. Lemma~\ref{lem:signal-block-scaled} gives
\[
        |\vo_k^{\T}\mF\vo_k|
        \le\|\mD^{-1}\mF\mD^{-1}\|_{\op}
             \|\mD\vo_k\|_2^2
        \le C_B\frac{\epsilon_n}{\sqrt n}\|\mD\vo_k\|_2^2.
\]
Since $\epsilon_n/\sqrt n\to0$, the multiplier is at most $1/2$ for large $n$. Using $\|\vo_k\|_2\le1$ gives
\begin{equation*}
        \|\mD\vo_k\|_2^2\le2s_k.
\end{equation*}
Finally,
\[
\begin{aligned}
\|\mSigma^{1/2}\va_k\|_2
&\le\sqrt\rho\frac{\|\mD\vo_k\|_2}{s_k}
 \le C\sqrt{\frac{\rho}{s_k}}
 \le C\sqrt{\frac{\rho}{n\LK}},
\end{aligned}
\]
so \(\va_k\in\mathcal A\) for large enough \(C_0\).
\end{proof}

\begin{lemma}[Rowwise control of the orthogonal component]
\label{lem:QhatV-row}
Under \eqref{eq:main-condition}, for every fixed \(B>0\), with probability \(1-O(n^{-B})\),
\begin{equation*}
        \|\mQ\widehat{\mV}\|_{2\to\infty}
        \le C_B\frac{\epsilon_n}{\sqrt n}.
\end{equation*}
Consequently, \( \|\mQ\widehat{\mV}\|_{2\to\infty}=o_p(n^{-1/2}) \).
The bound holds for any orthonormal basis of the leading sample eigenspace.
\end{lemma}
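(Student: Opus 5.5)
We will show that each column $\mQ\widehat{\vv}_k$ is one of the projected resolvent vectors controlled by Theorem~\ref{thm:projected-resolvent}, evaluated at the random parameters of Lemma~\ref{lem:random-params}. We then pass from this particular eigenbasis to an arbitrary orthonormal basis.

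First fix $B>0$ and intersect the events of Lemmas~\ref{lem:eig-lower}, \ref{lem:random-params} and Theorem~\ref{thm:projected-resolvent}; the intersection has probability $1-O(n^{-B})$. On this event the eigenvectors $\widehat{\vv}_1,\ldots,\widehat{\vv}_r$ exist with $s_k=\widehat\lambda_k\ge s_0>\|\mQ\mM\mQ\|_{\op}$. As in the proof of Lemma~\ref{lem:random-params}, the second block equation gives
\[
 \vh_k=\mQ\widehat{\vv}_k=(s_k\mI_n-\mQ\mM\mQ)^{-1}\mC\vo_k
 =\Bigl(\mI_n-\tfrac1{s_k}\mQ\mM\mQ\Bigr)^{-1}\tfrac1{s_k}\mC\vo_k .
\]
The next step is to identify $\mC\vo_k/s_k$. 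From \eqref{eq:gram-decomposition}, we have $\mQ\mY_{\vmu}^{\T}=\bm0$, $\mQ\mJ_n=\mQ$, $\mJ_n\mV=\mV$, and $\mY_{\vmu}\mV=\mU\mD$. Hence
\[
 \mC=\mQ\mE\mV=\mQ\mG^{\T}\mU\mD+\mQ\mM\mV .
\]
It follows that
\[
 \frac1{s_k}\mC\vo_k=\mQ\{\mG^{\T}\va_k+\mM\vv_k\},
\]
with $\va_k=\mU\mD\vo_k/s_k$ and $\vv_k=\mV\vo_k/s_k$ exactly as in Lemma~\ref{lem:random-params}. Since $\va_k\in\mathcal A$ and $\vv_k\in\mathcal V$ on the event, the uniformity in Theorem~\ref{thm:projected-resolvent} over $s\ge s_0$, $\va\in\mathcal A$, $\vv\in\mathcal V$ lets us substitute these random parameters. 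This gives $\|\mQ\widehat{\vv}_k\|_\infty\le C_B\epsilon_n/\sqrt n$ for each $k\le r$. Because $r$ is fixed, summing over the columns yields
\[
 \|\mQ\widehat{\mV}\|_{2\to\infty}\le\sqrt r\,\max_k\|\mQ\widehat{\vv}_k\|_\infty\le C_B\epsilon_n/\sqrt n .
\]

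Basis invariance follows next. On the event, the eigengap in Lemma~\ref{lem:eig-lower} makes the leading $r$-dimensional eigenspace of $\widehat{\mA}|_{\ve^\perp}$ unique. Lemma~\ref{lem:eig-lower} also shows that this eigenspace coincides with the leading eigenspace of $\mY^{\T}\mY$ chosen in $\ve^\perp$. Any orthonormal basis of it therefore has the form $\widehat{\mV}\mR'$ with $\mR'$ orthogonal. Since $\|(\mQ\widehat{\mV}\mR')_{i*}\|_2=\|(\mQ\widehat{\mV})_{i*}\|_2$, the row norms are unchanged. The $o_p(n^{-1/2})$ statement then follows from $\epsilon_n\to0$ under \eqref{eq:main-condition}.

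The main obstacle is the legitimacy of substituting the data-dependent quantities $s_k,\va_k,\vv_k$, which depend on all of $\mG$, into a bound proved for fixed parameters. This is precisely why Theorem~\ref{thm:projected-resolvent} is stated as a supremum over the label-measurable sets $\mathcal A$ and $\mathcal V$ and over all $s\ge s_0$. The proof therefore only needs to check the identity for $\mC\vo_k/s_k$ carefully, in particular that the $\mJ_n$ factors and the $\tr(\mSigma)$ shift drop out after projecting by $\mQ$. It must also confirm that admissibility holds simultaneously for all $k$, which Lemma~\ref{lem:random-params} provides.
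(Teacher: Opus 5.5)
Your proposal is correct and follows the paper's proof essentially step for step: you write $\mQ\widehat{\vv}_k$ as the projected resolvent applied to $\mQ\mE\mV\vo_k/s_k=\mQ\{\mG^{\T}\va_k+\mM\vv_k\}$, use Lemma~\ref{lem:random-params} for admissibility of the random parameters, apply the uniform bound of Theorem~\ref{thm:projected-resolvent}, combine the $r$ columns with a factor $\sqrt r$, and obtain basis independence from orthogonal invariance of row norms. The only cosmetic difference is that you get the resolvent identity from the second block equation in the proof of Lemma~\ref{lem:random-params}, whereas the paper projects the eigenvalue equation onto $\Imv(\mQ)$ using \eqref{eq:QEQ}; these are the same computation.
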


\begin{proof}
Projecting the eigenvalue equation for \(\widehat{\vv}_k\) onto \(\Imv(\mQ)\) and using \eqref{eq:QEQ} gives
\[
        s_k\mQ\widehat{\vv}_k
        =\mQ\mE\mV\vo_k+
          \mQ\mM\mQ\widehat{\vv}_k.
\]
Therefore
\begin{equation}
\mQ\widehat{\vv}_k
=\left(\mI_n-\frac1{s_k}\mQ\mM\mQ\right)^{-1}
 \mQ\mE\mV\frac{\vo_k}{s_k}.
\label{eq:Qhatv-resolvent}
\end{equation}
Because $\mQ\mY_{\vmu}^{\T}=0$, $\mY_{\vmu}\mV=\mU\mD$, $\mQ\mJ_n=\mQ$, and $\mJ_n\mV=\mV$,
\[
\begin{aligned}
\mQ\mE\mV\frac{\vo_k}{s_k}
&=\mQ\mG^{\T}\mU\mD\frac{\vo_k}{s_k}
  +\mQ\mM\mV\frac{\vo_k}{s_k}\\
&=\mQ\{\mG^{\T}\va_k+\mM\vv_k\}.
\end{aligned}
\]
By Lemma~\ref{lem:random-params}, \((s_k,\va_k,\vv_k)\) lies in the parameter set of Theorem~\ref{thm:projected-resolvent}. That theorem and \eqref{eq:Qhatv-resolvent} give
\[
        \|\mQ\widehat{\vv}_k\|_\infty
        \le C_B\frac{\epsilon_n}{\sqrt n},
        \qquad k=1,\ldots,r.
\]
Combining the $r$ columns rowwise yields
\[
        \|\mQ\widehat{\mV}\|_{2\to\infty}
        \le\sqrt r\max_{1\le k\le r}
             \|\mQ\widehat{\vv}_k\|_\infty
        \le C_B\frac{\epsilon_n}{\sqrt n}.
\]
For an orthogonal $\mS\in\R^{r\times r}$,
\[
 \|\mQ(\widehat{\mV}\mS)\|_{2\to\infty}
 =\|\mQ\widehat{\mV}\|_{2\to\infty}.
\]
The bound is therefore basis-invariant, also on the signal side since $\mQ=\mJ_n-\mP$. The stochastic order follows from $\epsilon_n\to0$.
\end{proof}

\subsubsection{Orthogonal alignment and sign recovery}
\label{sec:final-proofs}
The following two-to-infinity alignment bound is obtained from Proposition~6.5 and Lemma~6.7 of \citet{CapeTangPriebe2019}.

\begin{lemma}[Orthogonal alignment]
\label{lem:procrustes}
Let $\mV,\widehat{\mV}\in\R^{n\times r}$ have orthonormal columns, and set
\[
 \mH=(\mI_n-\mV\mV^{\T})\widehat{\mV},\qquad
 \mO=\mV^{\T}\widehat{\mV}.
\]
If $\|\mH\|_{\op}<1$, then $\mO$ is nonsingular and has the unique orthogonal polar factor
\[
 \mR=\mO(\mO^{\T}\mO)^{-1/2}.
\]
Moreover,
\begin{equation}
 \|\widehat{\mV}-\mV\mR\|_{2\to\infty}
 \le\|\mH\|_{2\to\infty}
       +\|\mV\|_{2\to\infty}\|\mH\|_{\op}^2.
 \label{eq:deterministic-alignment}
\end{equation}
\end{lemma}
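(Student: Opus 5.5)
We prove Lemma~\ref{lem:procrustes} by a purely deterministic matrix argument, with no probability involved. Write $\widehat{\mV}=\mV\mO+\mH$; this decomposition is exact because $\mV\mV^{\T}\widehat{\mV}=\mV\mO$.

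\textbf{Step 1: nonsingularity of $\mO$.} Since $\mV\mO$ and $\mH$ have orthogonal column spaces, the identity $\widehat{\mV}^{\T}\widehat{\mV}=\mI_r$ gives
\[
\mO^{\T}\mO+\mH^{\T}\mH=\mI_r .
\]
Hence $\mO^{\T}\mO=\mI_r-\mH^{\T}\mH\succeq(1-\|\mH\|_{\op}^2)\mI_r\succ\bm0$, so $\mO$ is nonsingular. A nonsingular square matrix has the unique polar decomposition $\mO=\mR(\mO^{\T}\mO)^{1/2}$, with orthogonal factor $\mR=\mO(\mO^{\T}\mO)^{-1/2}$.

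\textbf{Step 2: splitting the error.} Write
\[
\widehat{\mV}-\mV\mR=\mH+\mV(\mO-\mR).
\]
Submultiplicativity of the row norm gives $\|\mV\mB\|_{2\to\infty}\le\|\mV\|_{2\to\infty}\|\mB\|_{\op}$. It therefore suffices to show $\|\mO-\mR\|_{\op}\le\|\mH\|_{\op}^2$.

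\textbf{Step 3: bounding $\mO-\mR$.} This is the only real content. We have
\[
\mO-\mR=\mR\{(\mO^{\T}\mO)^{1/2}-\mI_r\},
\]
so $\|\mO-\mR\|_{\op}=\|\mI_r-(\mI_r-\mH^{\T}\mH)^{1/2}\|_{\op}$. Diagonalize $\mH^{\T}\mH$, whose eigenvalues are $\sigma_j^2\in[0,1)$. The eigenvalues of $\mI_r-(\mI_r-\mH^{\T}\mH)^{1/2}$ are then $1-\sqrt{1-\sigma_j^2}$. The scalar inequality $1-\sqrt{1-x}\le x$ holds for $x\in[0,1]$, because $\sqrt{1-x}\ge 1-x$ there. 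This yields $\|\mO-\mR\|_{\op}\le\max_j\sigma_j^2=\|\mH\|_{\op}^2$.

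Combining the three steps gives \eqref{eq:deterministic-alignment}. The main point to get right is this identification of $\mO^{\T}\mO$ with $\mI_r-\mH^{\T}\mH$ and the scalar square-root inequality; it is this that gives the quadratic term $\|\mH\|_{\op}^2$ rather than a first-order term. This matches the route in \citet{CapeTangPriebe2019}.
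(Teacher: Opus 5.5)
Your proof is correct and follows essentially the same route as the paper: the identity $\mO^{\T}\mO=\mI_r-\mH^{\T}\mH$, the split $\widehat{\mV}-\mV\mR=\mH+\mV(\mO-\mR)$, and the scalar bound $1-s\le 1-s^2$ for the singular values $s\in(0,1]$ of $\mO$, which is your inequality $1-\sqrt{1-x}\le x$. The only difference is cosmetic: the paper bounds $\|\mO-\mR\|_{\op}$ through an SVD of $\mO$, whereas you write $\mO-\mR=\mR\{(\mO^{\T}\mO)^{1/2}-\mI_r\}$ and use the spectral calculus of $\mH^{\T}\mH$.
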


\begin{proof}
Orthonormality gives
\[
 \mO^{\T}\mO
 =\widehat{\mV}^{\T}\mV\mV^{\T}\widehat{\mV}
 =\mI_r-\mH^{\T}\mH.
\]
Thus $\mO$ is nonsingular if $\|\mH\|_{\op}<1$, with singular values $s_1,\ldots,s_r$ in $(0,1]$. For an SVD $\mO=\mU_1\diag(s_1,\ldots,s_r)\mU_2^{\T}$, the polar factor is $\mR=\mU_1\mU_2^{\T}=\mO(\mO^{\T}\mO)^{-1/2}$. Hence
\[
\begin{aligned}
 \|\mO-\mR\|_{\op}
 &=\max_j(1-s_j)
 \le\max_j(1-s_j^2)\\
 &=\|\mI_r-\mO^{\T}\mO\|_{\op}
 =\|\mH\|_{\op}^2.
\end{aligned}
\]
Now $\widehat{\mV}-\mV\mR=\mH+\mV(\mO-\mR)$ gives \eqref{eq:deterministic-alignment}. The formula for $\mR$ is continuous, hence measurable, for nonsingular $\mO$.
\end{proof}

\begin{proof}[Proof of Theorem~\ref{thm:rowwise}]
Let $\mV$ and $\widehat{\mV}$ be the arbitrary orthonormal bases from Section~\ref{sec:model}, and put $\mH=(\mI_n-\mV\mV^{\T})\widehat{\mV}$.
The shift $\tr(\mSigma)\mJ_n$ preserves eigenspaces on $\ve^\perp$. On the event in Lemma~\ref{lem:eig-lower}, $\mJ_n\widehat{\mV}=\widehat{\mV}$, so $\mH=\mQ\widehat{\mV}$. For fixed $B>0$, Lemma~\ref{lem:QhatV-row} and Proposition~\ref{prop:population-structure} give, with probability $1-O(n^{-B})$,
\[
 \|\mH\|_{2\to\infty}\le C_B\frac{\epsilon_n}{\sqrt n},\qquad
 \|\mV\|_{2\to\infty}\le\frac{C}{\sqrt n},\qquad
 \|\mH\|_{\op}\le\|\mH\|_F
        \le\sqrt n\,\|\mH\|_{2\to\infty}\le C_B\epsilon_n.
\]
For large $n$, the last bound is less than one. Lemma~\ref{lem:procrustes} gives nonsingularity of $\mO=\mV^{\T}\widehat{\mV}$ and uniqueness of $\mR$, with
\[
 \|\widehat{\mV}-\mV\mR\|_{2\to\infty}
 \le C_B\frac{\epsilon_n+\epsilon_n^2}{\sqrt n}
 \le C_B\frac{\epsilon_n}{\sqrt n}.
\]
The stochastic order follows from $\epsilon_n\to0$. Uniqueness identifies the theorem's polar factor.
\end{proof}

\begin{proof}[Proof of Theorem~\ref{thm:sign-recovery}]
Fix $B>0$ and intersect the count event with the event in Theorem~\ref{thm:rowwise}. Let $\mR$ be its aligning matrix and put $\mDelta=\widehat{\mV}-\mV\mR$. Since $\mR\mR^{\T}=\mI_r$,
\begin{equation*}
        \widehat{\mP}-\mP
        =\mV\mR\mDelta^{\T}
         +\mDelta\mR^{\T}\mV^{\T}
         +\mDelta\mDelta^{\T}.
\end{equation*}
For any two $n\times r$ matrices $\mB,\mC$, entrywise Cauchy--Schwarz gives
\[
        \|\mB\mC^{\T}\|_{\max}
        \le\|\mB\|_{2\to\infty}\|\mC\|_{2\to\infty}.
\]
Consequently, on the chosen event,
\begin{align*}
\|\widehat{\mP}-\mP\|_{\max}
&\le2\|\mV\|_{2\to\infty}\|\mDelta\|_{2\to\infty}
    +\|\mDelta\|_{2\to\infty}^2\\
&\le\frac{C_B}{n}(\epsilon_n+\epsilon_n^2)
 \le C_B\frac{\epsilon_n}{n}
\end{align*}
for large $n$, by Proposition~\ref{prop:population-structure} and \eqref{eq:rowwise-rate}. The stochastic order follows from $\epsilon_n\to0$.

Proposition~\ref{prop:population-structure} gives $\min_{i,j}|P_{ij}|\ge c/n$. For large $n$, the error is less than $c/(2n)$, so all signs agree. These signs determine all within- and between-component relations and hence the true partition up to relabeling.

Integrating the noise bounds and adding the count failure probability gives $O(n^{-B})$ under the joint law. If $\mSigma=\bm0$, then $\widehat{\mP}=\mP$ whenever all groups are nonempty, whose complement has exponentially small probability.
\end{proof}

\begin{proof}[Proof of Corollary~\ref{cor:effective-rank}]
Since $\nu=\rho\sqrt{r_2(\mSigma)}$,
\[
\begin{aligned}
\rho\log n+\frac{a_n}{\sqrt n}
&=\rho\log n
 +\rho\sqrt{\frac{r_2(\mSigma)\log n}{n}}
 +\rho\frac{\log n}{\sqrt n}.
\end{aligned}
\]
The last term is at most $\rho\log n$ for large $n$, proving the claim.
If $\mSigma=\sigma^2\mI_p$, then $\rho=\sigma^2$ and $r_2(\mSigma)=p$.
\end{proof}

\section{Proofs for the Gaussian cutoff}
\label{sec:gaussian-proofs}
Assume only the Gaussian model with fixed mean shape \eqref{eq:gauss-shape}--\eqref{eq:gauss-model}, not \eqref{eq:main-condition}. Constants may depend on $K$, the mixing proportions, and $\mD_0$. Put $d_{\max}=\lambda_{\max}(\mD_0)$ and $q_{\max}=\max_k\|\vq_k\|_2$. All matrix square roots below are the unique symmetric positive semidefinite ones.

\subsection{Coordinates and the deterministic sign boundary}
\begin{lemma}[Weighted simplex coordinates]
\label{lem:gauss-coordinates}
Equation~\eqref{eq:gauss-q-geometry} holds.
If row $i$ of $\mQ_n\in\R^{n\times r}$ is $\vq_{Z_i}^{\T}$, put
\[
 \overline{\vq}_n=\frac1n\sum_i\vq_{Z_i},\qquad
 \mS_n=n^{-1}\mQ_n^{\T}\mJ_n\mQ_n.
\]
Then
\begin{equation*}
 \|\mS_n-\mI_r\|_{\op}=O_p(n^{-1/2}).
\end{equation*}
With probability tending to one, every group contains at least $n\pi_{\min}/2$ observations and
\begin{equation*}
 \mV_\circ=n^{-1/2}\mJ_n\mQ_n\mS_n^{-1/2}
\end{equation*}
is an orthonormal basis of the oracle signal space.
On this event, $\|\mV_\circ\|_{2\to\infty}=O(n^{-1/2})$ and $\max_i\|\sqrt n(V_\circ)_{i*}-\vq_{Z_i}^{\T}\|_2=O_p(n^{-1/2})$.
\end{lemma}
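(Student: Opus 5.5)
We first establish the geometry \eqref{eq:gauss-q-geometry} by direct algebra. Centering gives $\sum_k\pi_k\vq_k=\mD_0^{-1/2}\sum_k\pi_k\vm_k=\bm0$, and $\sum_k\pi_k\vq_k\vq_k^{\T}=\mD_0^{-1/2}\mD_0\mD_0^{-1/2}=\mI_r$. For the inner products, form the $r\times K$ matrix $\mT=(\sqrt{\pi_1}\vq_1,\ldots,\sqrt{\pi_K}\vq_K)$. Then $\mT\mT^{\T}=\mI_r$, so $\mT^{\T}\mT$ is a rank-$r$ orthogonal projection in $\R^K$. It annihilates $\sqrt{\vpi}=(\sqrt{\pi_1},\ldots,\sqrt{\pi_K})^{\T}$, because $\mT\sqrt{\vpi}=\sum_k\pi_k\vq_k=\bm0$. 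Since $r=K-1$, this projection equals $\mI_K-\sqrt{\vpi}\sqrt{\vpi}^{\T}$. Reading off entries gives $\sqrt{\pi_k\pi_\ell}\,\vq_k^{\T}\vq_\ell=\mathbb I_{\{k=\ell\}}-\sqrt{\pi_k\pi_\ell}$, which is the stated formula after dividing by $\sqrt{\pi_k\pi_\ell}$.

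Next we treat $\mS_n$. Write $\mQ_n^{\T}\mQ_n/n=\sum_k\widehat\pi_k\vq_k\vq_k^{\T}$, so that
\[
 \mS_n=\sum_k\widehat\pi_k\vq_k\vq_k^{\T}-\overline{\vq}_n\overline{\vq}_n^{\T},
 \qquad \overline{\vq}_n=\sum_k(\widehat\pi_k-\pi_k)\vq_k .
\]
The central limit theorem gives $|\widehat\pi_k-\pi_k|=O_p(n^{-1/2})$. Hence the first term equals $\mI_r+O_p(n^{-1/2})$ in operator norm, using $q_{\max}<\infty$. The second term satisfies $\|\overline{\vq}_n\|_2^2=O_p(n^{-1})$. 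Together these give $\|\mS_n-\mI_r\|_{\op}=O_p(n^{-1/2})$. Lemma~\ref{lem:count-concentration} shows that every group has at least $n\pi_{\min}/2$ observations with probability tending to one. Intersecting with the event $\|\mS_n-\mI_r\|_{\op}\le1/2$ makes $\mS_n$ invertible with $\|\mS_n^{-1/2}\|_{\op}\le\sqrt2$.

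We then check that $\mV_\circ$ is an orthonormal basis. Orthonormality follows from $\mV_\circ^{\T}\mV_\circ=n^{-1}\mS_n^{-1/2}\mQ_n^{\T}\mJ_n\mQ_n\mS_n^{-1/2}=\mI_r$, using $\mJ_n^2=\mJ_n$. For the span, the proof of Proposition~\ref{prop:population-structure} identifies the oracle signal space as $\Imv(\mH)\cap\ve^\perp$. The columns of $\mJ_n\mQ_n=\mJ_n\mH(\vq_1,\ldots,\vq_K)^{\T}$ lie in this space. There are $r$ of them, they are linearly independent because $\mS_n$ is invertible, and the space has dimension $r$, so they span it.

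Finally we bound the rows. Row $i$ of $\sqrt n\mV_\circ$ equals $(\vq_{Z_i}-\overline{\vq}_n)^{\T}\mS_n^{-1/2}$. Subtracting $\vq_{Z_i}^{\T}$ gives
\[
 \sqrt n(V_\circ)_{i*}-\vq_{Z_i}^{\T}
 =\vq_{Z_i}^{\T}(\mS_n^{-1/2}-\mI_r)-\overline{\vq}_n^{\T}\mS_n^{-1/2}.
\]
For the first term we need $\|\mS_n^{-1/2}-\mI_r\|_{\op}=O_p(n^{-1/2})$. This follows from $\|\mS_n-\mI_r\|_{\op}\le1/2$ by the spectral calculus, since $|x^{-1/2}-1|\le C|x-1|$ for $x\in[1/2,3/2]$. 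Both terms are then $O_p(n^{-1/2})$ uniformly in $i$, because there are only $K$ distinct vectors $\vq_k$. The same bound also yields $\|\mV_\circ\|_{2\to\infty}\le n^{-1/2}\sqrt2(q_{\max}+\|\overline{\vq}_n\|_2)=O(n^{-1/2})$ on the event.

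The only step that is not routine algebra is the projection argument for the inner products, and even that is short.
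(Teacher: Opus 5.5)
Your proposal is correct and follows essentially the same route as the paper: the projection argument identifying $(\sqrt{\pi_1}\vq_1,\ldots,\sqrt{\pi_K}\vq_K)^{\T}(\sqrt{\pi_1}\vq_1,\ldots,\sqrt{\pi_K}\vq_K)$ with $\mI_K-\sqrt{\vpi}\sqrt{\vpi}^{\T}$, the decomposition $\mS_n=n^{-1}\mQ_n^{\T}\mQ_n-\overline{\vq}_n\overline{\vq}_n^{\T}$ with $O_p(n^{-1/2})$ fluctuations, the Lipschitz bound for $x\mapsto x^{-1/2}$ on $[1/2,3/2]$, and the row formula $\sqrt n(V_\circ)_{i*}=(\vq_{Z_i}-\overline{\vq}_n)^{\T}\mS_n^{-1/2}$. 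The only cosmetic difference is how you identify the span. You use the characterization $\Imv(\mH)\cap\ve^\perp$ from the proof of Proposition~\ref{prop:population-structure} and get full column rank from invertibility of $\mS_n$. The paper instead reads the right signal space off $\mY_{\vmu}=\sqrt{L_n}\mU_n\mD_0^{1/2}\mQ_n^{\T}\mJ_n$ and gets the rank from affine independence.
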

\begin{proof}
The first two identities in \eqref{eq:gauss-q-geometry} follow from \eqref{eq:gauss-shape}. Let $\mW=(\sqrt{\pi_1}\vq_1,\ldots,\sqrt{\pi_K}\vq_K)$ and $\va=(\sqrt{\pi_1},\ldots,\sqrt{\pi_K})^{\T}$. Then $\mW\mW^{\T}=\mI_r$, $\mW\va=\bm0$, and $\|\va\|_2=1$. Since $r=K-1$, $\mW^{\T}\mW$ projects onto $\va^\perp$ and equals $\mI_K-\va\va^{\T}$. Divide its entries by $\sqrt{\pi_k\pi_\ell}$ to obtain the last identity and the positive signed margins.

Fixed-dimensional second moments give
\[
 n^{-1}\mQ_n^{\T}\mQ_n=\mI_r+O_p(n^{-1/2}),\qquad
 \overline{\vq}_n=O_p(n^{-1/2}).
\]
Since $\mS_n=n^{-1}\mQ_n^{\T}\mQ_n-\overline{\vq}_n\overline{\vq}_n^{\T}$, the bound follows. Lemma~\ref{lem:count-concentration} gives the count event. There, $\rank(\mJ_n\mQ_n)=r$ by affine independence, and $\mY_{\vmu}=\sqrt{L_n}\mU_n\mD_0^{1/2}\mQ_n^{\T}\mJ_n$ has right signal space $\Imv(\mJ_n\mQ_n)$, giving $\mV_\circ$.
Since $\|\mS_n-\mI_r\|_{\op}=O_p(n^{-1/2})$, with probability tending to one the spectrum of $\mS_n$ is contained in $[1/2,3/2]$.
The mean value theorem applied to $x\mapsto x^{-1/2}$ then gives
\[
\|\mS_n^{-1/2}-\mI_r\|_{\op}
\le C\|\mS_n-\mI_r\|_{\op}
=O_p(n^{-1/2}).
\]
 The row bounds follow from $\sqrt n(V_\circ)_{i*}=(\vq_{Z_i}-\overline{\vq}_n)^{\T}\mS_n^{-1/2}$.
\end{proof}

\begin{lemma}[Properties of the sign boundary functional]
\label{lem:gauss-geometry}
For every $\mW\succ\bm0$, $\mathfrak d(\mW)$ is finite and strictly positive, and its defining infimum is attained.
It is continuous on the positive definite cone and satisfies $\mathfrak d(a\mW)=\mathfrak d(\mW)/a$ for $a>0$.
If $0<R^2<\mathfrak d(\mW)$, all pairwise inner products of points in the ellipsoids
\[
 \mathcal C_k(\mW,R)
 =\{\vq_k+\mW^{1/2}\vu:\|\vu\|_2\le R\}
\]
have the prescribed signs, with a uniform strict margin. If $R^2>\mathfrak d(\mW)$, some interior pair has a strictly incorrect sign. Within a component, the pair can be chosen in disjoint neighborhoods.

For fixed $n$, $t\mapsto\mathfrak d(\mOmega_n(t))$ is continuous and strictly increasing from zero to infinity.
For $a\ge1$,
\begin{equation}
 a\,\mathfrak d(\mOmega_n(t))
 \le\mathfrak d(\mOmega_n(at))
 \le a^2\mathfrak d(\mOmega_n(t)).
 \label{eq:gauss-d-scaling}
\end{equation}
Consequently \eqref{eq:gauss-critical-equation} has a unique positive solution for $n\ge K\ge2$.
\end{lemma}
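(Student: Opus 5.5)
The plan is to obtain every assertion from three elementary facts: the positive signed margins at the origin, coercivity of the cost, and antitonicity of $\mathfrak d$ in the Loewner order.

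\emph{Finiteness, positivity and attainment.} Fix $k\le\ell$ and write $g_{k\ell}(\vx,\vy)=s_{k\ell}(\vq_k+\vx)^{\T}(\vq_\ell+\vy)$.
\begin{itemize}
\item \emph{Finiteness.} The pair $(\vx,\vy)=(-\vq_k,\bm0)$ gives $g_{k\ell}=0$, so it is feasible and $\mathfrak d(\mW)<\infty$.
\item \emph{Positivity.} By \eqref{eq:gauss-q-geometry} (Lemma~\ref{lem:gauss-coordinates}), $g_{k\ell}(\bm0,\bm0)=1/\pi_k-1>0$ if $k=\ell$ (since $K\ge2$), and $g_{k\ell}(\bm0,\bm0)=1$ if $k\ne\ell$. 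By continuity, a neighborhood of the origin in $\R^{2r}$ is infeasible. Since $\mW^{-1}\succ\bm0$, the cost $\max\{\vx^{\T}\mW^{-1}\vx,\vy^{\T}\mW^{-1}\vy\}$ is bounded below by a positive constant on the complement of that neighborhood, so $\mathfrak d(\mW)>0$.
\item \emph{Attainment.} The cost is continuous and coercive, and the feasible set $\{g_{k\ell}\le0\}$ is closed. Its intersection with a sublevel set is therefore compact, so the infimum is attained for each of the finitely many pairs $(k,\ell)$.
\end{itemize}

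\emph{Scaling and Loewner antitonicity.} Replacing $\mW$ by $a\mW$ divides every cost by $a$, which gives $\mathfrak d(a\mW)=\mathfrak d(\mW)/a$. If $\mW\preceq\mW'$, then $\mW'^{-1}\preceq\mW^{-1}$, so every cost decreases and $\mathfrak d(\mW')\le\mathfrak d(\mW)$.

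\emph{Continuity on the positive definite cone.} If $(1-\delta)\mW\preceq\mW'\preceq(1+\delta)\mW$, antitonicity and scaling give
\[
\mathfrak d(\mW)/(1+\delta)\le\mathfrak d(\mW')\le\mathfrak d(\mW)/(1-\delta).
\]

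\emph{Ellipsoid statements.} Write $\vx=\mW^{1/2}\vu$ and $\vy=\mW^{1/2}\vv$, so that the cost equals $\max\{\|\vu\|_2^2,\|\vv\|_2^2\}$.
\begin{itemize}
\item \emph{Case $R^2<\mathfrak d(\mW)$.} Every pair of points in $\mathcal C_k(\mW,R)\times\mathcal C_\ell(\mW,R)$ is infeasible, so $g_{k\ell}>0$ on this set. The set is compact and $g_{k\ell}$ is continuous, so $g_{k\ell}$ has a positive minimum there. This is the uniform strict margin.
\item \emph{Case $R^2>\mathfrak d(\mW)$.} Take a minimizer $(\vx^*,\vy^*)$ for an extremal pair $(k,\ell)$. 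It has cost $\mathfrak d(\mW)<R^2$ and satisfies $g_{k\ell}(\vx^*,\vy^*)\le0$. Any sufficiently small perturbation keeps the cost below $R^2$, hence stays in the interior of the ellipsoids, so it suffices to perturb to $g_{k\ell}<0$:
\begin{itemize}
\item If $g_{k\ell}<0$ already, there is nothing to do.
\item If $g_{k\ell}=0$ and $\va=\vq_k+\vx^*$, $\vb=\vq_\ell+\vy^*$ are both nonzero, move $\vy^*$ by $-\varepsilon s_{k\ell}\va$. This changes $g_{k\ell}$ by $-\varepsilon\|\va\|_2^2<0$.
\item If $\vb=\bm0$ but $\va\ne\bm0$, the same move works. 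The case $\va=\bm0$, $\vb\ne\bm0$ is symmetric.
\item If $\va=\vb=\bm0$, move $\vx^*$ by $\varepsilon\vw$ and $\vy^*$ by $-\varepsilon s_{k\ell}\vw$ for any $\vw\ne\bm0$. This gives $g_{k\ell}=-\varepsilon^2\|\vw\|_2^2<0$.
\end{itemize}
\item \emph{Within a component ($k=\ell$).} The condition $g_{kk}<0$ is open, so a further small perturbation makes the two points distinct. It then also holds on small disjoint neighborhoods of them.
\end{itemize}

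\emph{Behavior of $t\mapsto\mathfrak d(\mOmega_n(t))$.} For $a\ge1$,
\[
\mOmega_n(at)=\frac{1}{at}\mD_0^{-1}+\frac{\gamma_n}{a^2t^2}\mD_0^{-2}
\]
satisfies $a^{-2}\mOmega_n(t)\preceq\mOmega_n(at)\preceq a^{-1}\mOmega_n(t)$. Antitonicity and scaling then give \eqref{eq:gauss-d-scaling}.
\begin{itemize}
\item \emph{Strict increase.} For $a>1$, the left inequality gives $\mathfrak d(\mOmega_n(at))\ge a\,\mathfrak d(\mOmega_n(t))>\mathfrak d(\mOmega_n(t))$, using $\mathfrak d>0$.
\item \emph{Limits at $0$ and $\infty$.} Fix $t_0$. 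For $t\ge t_0$, apply the left inequality with $a=t/t_0$ to get $\mathfrak d(\mOmega_n(t))\ge(t/t_0)\,\mathfrak d(\mOmega_n(t_0))\to\infty$. For $t\le t_0$, apply it with $a=t_0/t$ and base point $t$ to get $\mathfrak d(\mOmega_n(t))\le(t/t_0)\,\mathfrak d(\mOmega_n(t_0))\to0$.
\item \emph{Continuity.} This follows from continuity of $\mathfrak d$ and of $t\mapsto\mOmega_n(t)$ into the positive definite cone.
\end{itemize}
The intermediate value theorem, applied with $2\log n>0$, then gives a unique solution of \eqref{eq:gauss-critical-equation}.

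The main obstacle is the strict-incorrect-sign claim in the case $R^2>\mathfrak d(\mW)$. The minimizer may lie only on the boundary $g_{k\ell}=0$, possibly at a point where $g_{k\ell}$ has vanishing gradient. This forces the case analysis on whether $\vq_k+\vx^*$ and $\vq_\ell+\vy^*$ vanish, with the second-order perturbation needed when both do.
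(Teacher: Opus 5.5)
Your proof is correct and follows essentially the same route as the paper: feasibility of $(-\vq_k,\bm0)$, coercivity for attainment, the positive signed margins at the origin for positivity, homogeneity and Loewner comparison of the inverse quadratic forms for continuity, compactness for the margin, and the sandwich $a^{-2}\mOmega_n(t)\preceq\mOmega_n(at)\preceq a^{-1}\mOmega_n(t)$ for \eqref{eq:gauss-d-scaling}. There are two small differences. You spell out the perturbation case analysis that the paper only asserts; for $k=\ell$, distinctness of the two points is automatic, since $g_{kk}(\vx,\vx)=\|\vq_k+\vx\|_2^2\ge0$. You also obtain the endpoint limits directly from \eqref{eq:gauss-d-scaling} with a fixed base point, whereas the paper uses the comparison $d_{\max}^{-2}v_n(t)\mI_r\preceq\mOmega_n(t)\preceq v_n(t)\mI_r$, which it records because it is reused later.
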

\begin{proof}
For each pair $(k,\ell)$, the constraint set in \eqref{eq:gauss-d-functional} is closed and contains $\vx=-\vq_k$, $\vy=\bm0$. Coercivity gives attainment. At $\vx=\vy=\bm0$, all signed inner products are positive by \eqref{eq:gauss-q-geometry}. Continuity and finite $K$ exclude a neighborhood of the origin from all constraint sets. Homogeneity follows from the inverse.
If $(1-\delta)\mW\preceq\mW'\preceq(1+\delta)\mW$ for $0<\delta<1$, comparison of the inverse quadratic forms gives
\[
 \frac{\mathfrak d(\mW)}{1+\delta}
 \le\mathfrak d(\mW')
 \le\frac{\mathfrak d(\mW)}{1-\delta}.
\]
This comparison holds for sufficiently close positive definite matrices, proving continuity.
By definition and compactness, if $R^2<\mathfrak d(\mW)$, all prescribed signs hold with a uniform strict margin.
If $R^2>\mathfrak d(\mW)$, a minimizing pair for $\mathfrak d(\mW)$ lies strictly inside the radius-$R$ ellipsoids.
If its signed inner product is zero, an arbitrarily small feasible perturbation makes it strictly incorrect.
For a within-component error, the perturbations can be chosen in disjoint neighborhoods, ensuring distinct points.

For $a\ge1$, \eqref{eq:gauss-Omega} gives
\[
 a^{-2}\mOmega_n(t)\preceq\mOmega_n(at)
                         \preceq a^{-1}\mOmega_n(t).
\]
Taking inverses and infima gives \eqref{eq:gauss-d-scaling} and strict increase for $a>1$. Continuity follows as above. With $v_n(t)=t^{-1}+\gamma_n t^{-2}$, normalization of $\mD_0$ gives
\begin{equation}
 d_{\max}^{-2}v_n(t)\mI_r\preceq\mOmega_n(t)
                                  \preceq v_n(t)\mI_r,
 \qquad
 \frac{\mathfrak d(\mI_r)}{v_n(t)}
 \le\mathfrak d(\mOmega_n(t))
 \le\frac{d_{\max}^{2}\mathfrak d(\mI_r)}{v_n(t)}.
 \label{eq:gauss-comparability}
\end{equation}
For fixed $n$, $v_n(t)$ decreases from infinity to zero as $t$ increases from zero to infinity, giving the endpoint limits and uniqueness.
\end{proof}

\subsection{Gaussian rotation invariance and the residual covariance}
Rescale by $\sigma_n$ and write $t=t_n$, $\gamma=p/n$. This leaves $\widehat{\mP}$ unchanged.
On the count event in Lemma~\ref{lem:gauss-coordinates}, put $\mQ=\mJ_n-\mV_\circ\mV_\circ^{\T}$ and define
\begin{equation*}
 \mA_\circ=(\mV_\circ^{\T}\widehat{\mP}\mV_\circ)^{1/2},
 \qquad \widetilde{\mV}
       =\widehat{\mP}\mV_\circ\mA_\circ^{-1},
 \qquad \mH_\circ=\mQ\widetilde{\mV},
 \qquad \mB_\circ=(\mI_r-\mA_\circ^2)^{1/2}.
\end{equation*}
The definitions imply
\begin{equation}
 \widetilde{\mV}=\mV_\circ\mA_\circ+\mH_\circ,
 \qquad \widetilde{\mV}^{\T}\widetilde{\mV}=\mI_r,
 \qquad \mH_\circ^{\T}\mH_\circ=\mB_\circ^2.
 \label{eq:gauss-canonical-identities}
\end{equation}
For sufficiently large $n$, $\mA_\circ$ and $\mB_\circ$ are positive definite almost surely conditional on the count event.
Conditional on the labels, the centered $p\times(n-1)$ data matrix has a positive Gaussian density and is therefore mutually absolutely continuous with the central Gaussian law.
Under the central law, right orthogonal invariance makes the rank-$r$ projection uniform on the Grassmannian of $r$-dimensional subspaces of $\ve^\perp$.
For $n-1\ge2r$, its principal angles to the fixed oracle signal space lie in $(0,\pi/2)$ almost surely, since the corresponding Gaussian coordinate blocks have full column rank.
Absolute continuity transfers these properties to the present model.
The boundary eigenvalue of the leading $r$-dimensional eigenspace is also simple almost surely.
Define auxiliary quantities arbitrarily outside the count event.

For related rotation-invariance and Gaussian frame arguments, see \citet{Paul2007} and \citet{Jiang2006}.

\begin{lemma}[Conditional Gaussian representation]
\label{lem:gauss-Haar}
For all sufficiently large $n$, conditional on a label realization in the count event and on $\mA_\circ$, the matrix $\mH_\circ\mB_\circ^{-1}$ is uniform among orthonormal $r$-frames in $\Imv(\mQ)$.
The same joint law of the labels and $\widehat{\mP}$ admits a realization in which, on this event,
\begin{equation*}
 \widetilde{\mV}
 =\mV_\circ\mA_\circ
  +\mQ\mZ_*(\mZ_*^{\T}\mQ\mZ_*)^{-1/2}\mB_\circ,
\end{equation*}
where $\mZ_*\in\R^{n\times r}$ has iid standard Gaussian entries and is independent of the labels and $\mA_\circ$.
If row $i$ of $\mZ_*$ is $\vz_i^{\T}$, then
\begin{equation}
 \max_i\|\sqrt n\,\widetilde V_{i*}
       -\vq_{Z_i}^{\T}\mA_\circ-\vz_i^{\T}\mB_\circ\|_2
       =O_p\!\left(\sqrt{\frac{\log n}{n}}\right).
 \label{eq:gauss-row-Haar}
\end{equation}
The representation holds at every signal level.
\end{lemma}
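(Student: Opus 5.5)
The plan has three steps: exploit the right orthogonal invariance of the Gaussian noise conditional on the labels, write the resulting uniform frame as a normalized Gaussian matrix, and then read off the rows.

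\emph{Invariance.} After rescaling, the centered data matrix is $\mY=\sqrt{t}\,\mU_n\mD_0^{1/2}\mQ_n^{\T}\mJ_n+\mX\mJ_n$ with $\mX$ having iid $N(0,1)$ entries. Let $\mO$ be any orthogonal $n\times n$ matrix that fixes $\Imv(\mV_\circ)$ and $\ve$ pointwise; such a matrix acts only on $\Imv(\mQ)$ and depends only on the labels. Then $\mY_{\vmu}\mO=\mY_{\vmu}$, $\mJ_n\mO=\mO\mJ_n$, and $\mX\mO\overset{d}{=}\mX$, so $\mY\mO\overset{d}{=}\mY$ given the labels. Since $(\mY\mO)^{\T}(\mY\mO)=\mO^{\T}\mY^{\T}\mY\mO$, the sample projection transforms as $\widehat{\mP}\mapsto\mO^{\T}\widehat{\mP}\mO$. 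This conjugation fixes $\mA_\circ$ (because $\mO\mV_\circ=\mV_\circ$) and maps $\mH_\circ\mB_\circ^{-1}$ to $\mO^{\T}\mH_\circ\mB_\circ^{-1}$. The frame $\mH_\circ\mB_\circ^{-1}$ is orthonormal by \eqref{eq:gauss-canonical-identities}, and $\mB_\circ\succ\bm0$ almost surely by the principal-angle remark preceding the lemma. Its conditional law given $(\text{labels},\mA_\circ)$ is therefore invariant under the orthogonal group of $\Imv(\mQ)$, which acts transitively on $r$-frames there. Hence the frame is Haar-uniform on the Stiefel manifold of $\Imv(\mQ)$, independently of $\mA_\circ$.

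\emph{Gaussian construction.} It is standard that $\mQ\mZ_*(\mZ_*^{\T}\mQ\mZ_*)^{-1/2}$ is Haar on the same Stiefel manifold when $\mZ_*$ is Gaussian independent of everything; this follows from the same invariance applied to $\mQ\mZ_*$, which has full column rank almost surely since $\dim\Imv(\mQ)=n-r\ge r$. We therefore enlarge the probability space by drawing $\mZ_*$ independently and redefine the frame through the displayed formula. Since $\widehat{\mP}=\widetilde{\mV}\widetilde{\mV}^{\T}$, this preserves the joint law of the labels and $\widehat{\mP}$.

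\emph{Row bound.} Lemma~\ref{lem:gauss-coordinates} gives $\sqrt n(V_\circ)_{i*}=\vq_{Z_i}^{\T}+O_p(n^{-1/2})$ uniformly in $i$, and $\|\mA_\circ\|_{\op}\le1$. For the Haar part, write $\mQ\mZ_*=\mZ_*-\mPi_0\mZ_*$, where $\mPi_0=\mV_\circ\mV_\circ^{\T}+\ve\ve^{\T}$ is a rank-$K$ projection depending only on the labels. The $r\times K$ matrix $\mZ_*^{\T}[\mV_\circ,\ve]$ has Gaussian entries, and $\|[\mV_\circ,\ve]\|_{2\to\infty}=O(n^{-1/2})$, so $\|\mPi_0\mZ_*\|_{2\to\infty}=O_p(n^{-1/2})$. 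Next, $\mZ_*^{\T}\mQ\mZ_*/n=\mI_r+O_p(n^{-1/2})$ by fixed-dimensional concentration, so $\sqrt n(\mZ_*^{\T}\mQ\mZ_*)^{-1/2}=\mI_r+O_p(n^{-1/2})$ by the same mean-value argument as in Lemma~\ref{lem:gauss-coordinates}. Finally, $\max_i\|\vz_i\|_2=O_p(\sqrt{\log n})$. Combining these,
\[
\sqrt n\,\widetilde V_{i*}-\vq_{Z_i}^{\T}\mA_\circ-\vz_i^{\T}\mB_\circ
=O_p\!\left(n^{-1/2}\right)+O_p\!\left(\sqrt{\log n}\,n^{-1/2}\right)\|\mB_\circ\|_{\op}
\]
uniformly in $i$, and $\|\mB_\circ\|_{\op}\le1$, which gives \eqref{eq:gauss-row-Haar}. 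None of this uses the signal level, since invariance holds for every $t$.

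\emph{Main obstacle.} The delicate step is making the conditioning on $\mA_\circ$ rigorous while the event $\{\mB_\circ\succ\bm0\}$ holds only almost surely. The realization argument must also preserve the joint law with the labels, not just the conditional law. I would handle this with a measurable disintegration: express $\widehat{\mP}$ as a Borel function of $(\mA_\circ,\text{frame})$ on the full-measure set where $\mB_\circ$ is invertible, then apply the transitivity of the orthogonal group on orthonormal $r$-frames of $\Imv(\mQ)$ to identify the invariant conditional law as the unique Haar measure.
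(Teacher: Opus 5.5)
Your proposal is correct and follows essentially the same route as the paper: invariance of the conditional law under the stabilizer of $\ve$ and $\Imv(\mV_\circ)$ gives a frame that is Haar given the labels and $\mA_\circ$, the Gaussian polar construction supplies $\mZ_*$, and the row bound uses the same decomposition $\mQ\mZ_*-\mZ_*=-(\ve\ve^{\T}+\mV_\circ\mV_\circ^{\T})\mZ_*$. The paper settles your ``main obstacle'' with a direct Haar-averaging (Fubini) identity instead of a disintegration; also, one harmless slip is that $\Imv(\mQ)$ has dimension $N=n-r-1$, not $n-r$, which suffices for large $n$.
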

\begin{proof}
Fix $Z=(Z_1,\ldots,Z_n)$ in the count event and set
\[
 \mathcal R_Z=\{\mR\in O(n):\mR\ve=\ve,\ \mR\mV_\circ=\mV_\circ\},
 \qquad \mF_*=\mH_\circ\mB_\circ^{-1},
\]
where $O(n)$ is the orthogonal group. Then $\mF_*$ is an orthonormal $r$-frame in $\Imv(\mQ)$.
For $\mR\in\mathcal R_Z$, right multiplication of the data by $\mR^{\T}$ preserves its conditional law and sends $\widehat{\mP}$ to $\mR\widehat{\mP}\mR^{\T}$.
Thus $\mA_\circ$ and $\mB_\circ$ are unchanged, whereas $\mF_*$ is sent to $\mR\mF_*$.
Let $\mu_Z$ be normalized Haar measure on $\mathcal R_Z$ and $\nu_Z$ the uniform probability measure on the frame space. For bounded Borel functions $f$ and $g$, invariance and averaging give
\[
\begin{aligned}
 \E[g(\mA_\circ)f(\mF_*)\mid Z]
 &=\E\!\left[
 g(\mA_\circ)\int_{\mathcal R_Z}f(\mR\mF_*)\,d\mu_Z(\mR)
 \,\middle|\,Z\right]\\
 &=\E[g(\mA_\circ)\mid Z]\int f\,d\nu_Z.
\end{aligned}
\]
By transitivity of the action of $\mathcal R_Z$ on the frame space, the group average is independent of $\mF_*$.
Hence the conditional law of $\mF_*$ given $(Z,\mA_\circ)$ is $\nu_Z$.
For a matrix $\mZ_*$ with iid standard Gaussian entries, independent of $(Z,\mA_\circ)$, the normalized matrix
\[
 \mQ\mZ_*(\mZ_*^{\T}\mQ\mZ_*)^{-1/2}
\]
has this conditional law by the Gaussian polar construction of a uniform orthonormal frame, applied in an orthonormal basis of $\Imv(\mQ)$; see \citet{JauchHoffDunson2021}. Multiplication by $\mB_\circ$ and \eqref{eq:gauss-canonical-identities} give the asserted representation.

Conditional on the labels, Gaussian moments in fixed dimension and a union bound yield
\[
 \|\mZ_*^{\T}\mQ\mZ_*/n-\mI_r\|_{\op}=O_p(n^{-1/2}),
 \quad \|(\ve,\mV_\circ)^{\T}\mZ_*\|_{\op}=O_p(1),
 \quad \max_i\|\vz_i\|_2=O_p(\sqrt{\log n}).
\]
The first bound follows from a Wishart law with $n-r-1$ degrees of freedom and entry variances $O(n)$ before division by $n$. Also,
\[
 \mQ\mZ_*-\mZ_*
 =-\ve(\ve^{\T}\mZ_*)-\mV_\circ(\mV_\circ^{\T}\mZ_*)
\]
has maximum row norm $O_p(n^{-1/2})$.
Consequently
\[
 \|\sqrt n\mQ\mZ_*(\mZ_*^{\T}\mQ\mZ_*)^{-1/2}
                         -\mZ_*\|_{2\to\infty}
 =O_p(\sqrt{\log n/n}).
\]
Use $\|\mA_\circ\|_{\op},\|\mB_\circ\|_{\op}\le1$ and the row bound in Lemma~\ref{lem:gauss-coordinates} to obtain \eqref{eq:gauss-row-Haar}.
\end{proof}

Complete $\mU_n$ to an orthonormal basis of $\R^p$ and $\mV_\circ$ to $(\mV_\circ,\mV_\perp)$ on $\ve^\perp$. Put $N=n-r-1$. Then $\mV_\perp\in\R^{n\times N}$ has orthonormal columns spanning $\Imv(\mQ)$. In these bases, the data divided by $\sigma_n$ form a $p\times(n-1)$ matrix
\begin{equation*}
 \mathcal Y=\begin{pmatrix}
       \mR_n+\mG_{11}&\mG_{12}\\
       \mG_{21}&\mG_{22}
       \end{pmatrix},
 \qquad
 \mR_n=\sqrt{nt}\,\mD_0^{1/2}\mS_n^{1/2}.
\end{equation*}
Given the labels, the four blocks are independent Gaussian matrices with iid standard entries and sizes $r\times r$, $r\times N$, $(p-r)\times r$, and $(p-r)\times N$. The bottom blocks are empty if $p=r$. Although $\mR_n$ need not be symmetric,
\[
 \mR_n^{\T}\mR_n=nt\,\mD_{0,n},\qquad
 \mD_{0,n}:=\mS_n^{1/2}\mD_0\mS_n^{1/2}
                 =\mD_0+O_p(n^{-1/2}).
\]
The shifted Gram matrix is
\begin{equation}
 \mathcal Y^{\T}\mathcal Y-p\mI_{n-1}
 =\begin{pmatrix}nt\mD_{0,n}+\mF_\circ&\mC_\circ^{\T}\\
                  \mC_\circ&\mT_\circ\end{pmatrix},
 \label{eq:gauss-Gram-blocks}
\end{equation}
where
\begin{align*}
 \mF_\circ&=\mR_n^{\T}\mG_{11}+\mG_{11}^{\T}\mR_n
          +\mG_{11}^{\T}\mG_{11}+\mG_{21}^{\T}\mG_{21}-p\mI_r,\\
 \mC_\circ&=\mG_{12}^{\T}(\mR_n+\mG_{11})
                                      +\mG_{22}^{\T}\mG_{21},\\
 \mT_\circ&=\mG_{12}^{\T}\mG_{12}
                                      +\mG_{22}^{\T}\mG_{22}-p\mI_N.
\end{align*}

On the count event, express the canonical basis in these coordinates as
\[
 \mH_\perp=\mV_\perp^{\T}\mH_\circ,\qquad
 \widetilde{\mV}_{\mathrm{blk}}
   =\begin{pmatrix}\mA_\circ\\\mH_\perp\end{pmatrix},\qquad
 \mH_\perp^{\T}\mH_\perp=\mB_\circ^2.
\]
The columns of $\widetilde{\mV}_{\mathrm{blk}}$ are an orthonormal basis of the leading $r$-dimensional invariant subspace of $\mathcal Y^{\T}\mathcal Y-p\mI_{n-1}$. The restriction to this subspace is represented by
\[
 \mJ_\circ
 =\widetilde{\mV}_{\mathrm{blk}}^{\T}
   (\mathcal Y^{\T}\mathcal Y-p\mI_{n-1})
   \widetilde{\mV}_{\mathrm{blk}}.
\]
Thus $\mJ_\circ$ is symmetric with the leading $r$ eigenvalues. Invariance gives
\begin{equation}
 \begin{split}
 (nt\mD_{0,n}+\mF_\circ)\mA_\circ
                  +\mC_\circ^{\T}\mH_\perp&=\mA_\circ\mJ_\circ,\\
 \mC_\circ\mA_\circ+\mT_\circ\mH_\perp
                                      &=\mH_\perp\mJ_\circ.
 \end{split}
 \label{eq:gauss-canonical-block-equations}
\end{equation}
These identities hold at every signal level, without eigenspace consistency.

\begin{lemma}[Gaussian block estimates]
\label{lem:gauss-blocks}
The blocks in \eqref{eq:gauss-Gram-blocks} satisfy
\begin{equation*}
 \|\mF_\circ\|_{\op}=O_p(\sqrt{nt}+\sqrt p+1),\qquad
 \|\mT_\circ\|_{\op}\le C(\sqrt{np}+n)
\end{equation*}
where the second bound holds with probability tending to one.
If $nt+p\to\infty$,
\begin{equation}
 \|\mC_\circ^{\T}\mC_\circ-n(nt\mD_0+p\mI_r)\|_{\op}
                  =o_p\{n(nt+p)\}.
 \label{eq:gauss-C-energy}
\end{equation}
If $t+\gamma$ is bounded, then also
\begin{equation}
 \|\mF_\circ\|_{\op}/n=o_p(1),\qquad
 \|\mC_\circ\|_{\op}^2/n^2
                     \le d_{\max}t+\gamma+o_p(1).
 \label{eq:gauss-small-blocks}
\end{equation}
\end{lemma}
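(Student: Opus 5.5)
We prove the three groups of bounds conditionally on a label realization in the count event. Given the labels, $\mG_{11},\mG_{12},\mG_{21},\mG_{22}$ are independent standard Gaussian blocks, and $\mR_n$ is deterministic with $\|\mR_n\|_{\op}\le C\sqrt{nt}$ since $\mS_n$ has spectrum in $[1/2,3/2]$. Every estimate combines standard Gaussian singular-value bounds with fixed-dimensional concentration. Because $r$ is fixed, all $r\times r$ objects can be handled entrywise or by a fixed net (Lemma~\ref{lem:net-reconstruction}).

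\textbf{Bounds for $\mF_\circ$ and $\mT_\circ$.} We bound the four pieces of $\mF_\circ$ separately:
\begin{itemize}
\item $\|\mR_n^{\T}\mG_{11}\|_{\op}\le\|\mR_n\|_{\op}\|\mG_{11}\|_{\op}=O_p(\sqrt{nt})$;
\item $\|\mG_{11}^{\T}\mG_{11}\|_{\op}=O_p(1)$;
\item $\mG_{21}^{\T}\mG_{21}-p\mI_r$ is a centered $r\times r$ Wishart fluctuation with $p-r$ degrees of freedom, minus $r\mI_r$, hence $O_p(\sqrt p+1)$ by entrywise variances.
\end{itemize}
This gives $\|\mF_\circ\|_{\op}=O_p(\sqrt{nt}+\sqrt p+1)$.

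For $\mT_\circ$, let $\mG_{\cdot2}$ be the $p\times N$ stack of $\mG_{12}$ and $\mG_{22}$, so $\mT_\circ=\mG_{\cdot2}^{\T}\mG_{\cdot2}-p\mI_N$. Its operator norm is bounded by $\max_j|s_j^2-p|$ over the singular values $s_j$ of $\mG_{\cdot2}$ (including zeros when $N>p$). Gaussian concentration gives $s_j\in[\sqrt p-\sqrt N-u,\sqrt p+\sqrt N+u]$ with $u=\sqrt n$, outside probability $e^{-n/2}$. Hence $|s_j^2-p|\le C(\sqrt{np}+n)$ in both regimes $p\ge N$ and $p<N$; when $p<N$ the zero singular values contribute $p\le\sqrt{np}$.

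\textbf{The energy bound \eqref{eq:gauss-C-energy}.} Write $\mC_\circ=\mG_{12}^{\T}\mX+\mG_{22}^{\T}\mG_{21}$ with $\mX=\mR_n+\mG_{11}$. Expanding,
\[
\mC_\circ^{\T}\mC_\circ=\mX^{\T}\mG_{12}\mG_{12}^{\T}\mX+\mG_{21}^{\T}\mG_{22}\mG_{22}^{\T}\mG_{21}+\text{cross terms}.
\]
\begin{itemize}
\item \emph{First term.} Condition on $\mX$. Then $\mG_{12}\mG_{12}^{\T}=N\mI_r+O_p(\sqrt N)$, so this term equals $N\mX^{\T}\mX+O_p(\sqrt n\,\|\mX\|_{\op}^2)$. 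Moreover $\mX^{\T}\mX=nt\mD_{0,n}+O_p(\sqrt{nt}+1)$ and $\mD_{0,n}=\mD_0+O_p(n^{-1/2})$. So the first term is $n\cdot nt\mD_0+o_p(n(nt+p))$; the error $O_p(n\sqrt{nt}+\sqrt n\,nt+n)$ is $o(n(nt+p))$ once $nt+p\to\infty$.
\item \emph{Second term.} Condition on $\mG_{21}$. Then $\mG_{21}^{\T}\mG_{22}\mG_{22}^{\T}\mG_{21}$ has conditional mean $N\mG_{21}^{\T}\mG_{21}=Np\mI_r+O_p(n\sqrt p)$. The conditional fluctuation, via Hanson--Wright (Lemma~\ref{lem:subg-tools}(ii)) on the $r$-dimensional directions with a fixed net, is $O_p(\sqrt N\,\|\mG_{21}\|_{\op}^2)=O_p(\sqrt n\,(p+1))$.
\item \emph{Cross terms.} These are $\mX^{\T}\mG_{12}\mG_{22}^{\T}\mG_{21}$ and its transpose. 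Conditional on everything except $\mG_{12}$, each is a centered Gaussian matrix with entry standard deviation $O(\|\mX\|_{\op}\,\|\mG_{22}^{\T}\mG_{21}\|_{\op})$. The latter factor is $O_p(\sqrt{np}+\sqrt{n})$ by the previous step, so the cross terms are $O_p(\sqrt{nt+1}\sqrt{n(p+1)})\le O_p(\sqrt n\,(nt+p+1))$.
\end{itemize}
Since $N=n+O(1)$, every error is $o_p(n(nt+p))$. This gives \eqref{eq:gauss-C-energy}.

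\textbf{Bounds \eqref{eq:gauss-small-blocks}.} If $t+\gamma$ is bounded then $p=O(n)$, so $\|\mF_\circ\|_{\op}/n=O_p((\sqrt{nt}+\sqrt p+1)/n)=o_p(1)$. For the norm of $\mC_\circ$: if $nt+p\to\infty$, \eqref{eq:gauss-C-energy} gives $\|\mC_\circ\|_{\op}^2\le n(nt\,d_{\max}+p)+o_p(n^2)$. Dividing by $n^2$ yields $d_{\max}t+\gamma+o_p(1)$. If $nt+p$ stays bounded along a subsequence, the direct bound $\|\mC_\circ\|_{\op}=O_p(\sqrt n)$ gives $o_p(1)$ after division by $n^2$. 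Combining the subsequences gives the claim.

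The main obstacle is the cross-term and second-term control in \eqref{eq:gauss-C-energy} uniformly over all growth regimes of $p$ and $t$. The key point is that relative errors must be measured against $n(nt+p)$ rather than against each piece separately, so the estimate works whichever of $nt$ and $p$ dominates.
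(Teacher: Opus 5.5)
Your proof is correct. For $\mT_\circ$ and for the energy bound \eqref{eq:gauss-C-energy} it takes a different route from the paper.

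\textbf{The bound on $\mT_\circ$.} The paper does not use extreme singular values of the stacked $p\times N$ block. It notes that $\vu^{\T}\mT_\circ\vu\sim\chi^2_p-p$ for each unit $\vu\in\R^N$. It then applies the Laurent--Massart tail with $x=4N$ on a $1/4$-net of $S^{N-1}$ and concludes with Lemma~\ref{lem:net-reconstruction}. Your Davidson--Szarek-type bound, with the case split $p\ge N$ versus $p<N$, gives the same conclusion with comparable exponentially small failure probability.

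\textbf{The energy bound.} The paper avoids cross terms altogether. It stacks $\mR_n+\mG_{11}$ on top of $\mG_{21}$ to form a $p\times r$ matrix, so that $\mC_\circ$ is the transposed $p\times N$ block times that matrix. Hence, given $(\mG_{11},\mG_{21})$ and the labels, the $N$ rows of $\mC_\circ$ are iid $N(\bm0,\mKappa_n)$, where
\[
\mKappa_n=(\mR_n+\mG_{11})^{\T}(\mR_n+\mG_{11})+\mG_{21}^{\T}\mG_{21}.
\]
A single fixed-dimensional Wishart bound,
\[
\|N^{-1}\mC_\circ^{\T}\mC_\circ-\mKappa_n\|_{\op}=O_p(N^{-1/2}\|\mKappa_n\|_{\op}),
\]
then suffices. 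Your two diagonal pieces and two cross terms sum to exactly this conditional quadratic form, so your expansion recovers the same fact piece by piece. The orders you obtain are $O_p(n\sqrt{nt}+\sqrt n\,nt+n)$, $O_p(n\sqrt p+\sqrt n(p+1))$, and $O_p(\sqrt n(nt+p+1))$. Each is $o\{n(nt+p)\}$ once $nt+p\to\infty$, as you claim.

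\textbf{The small-block bound.} The Wishart representation lets the paper read off \eqref{eq:gauss-small-blocks} directly from $\mKappa_n/n=t\mD_0+\gamma\mI_r+o_p(1)$, without requiring $nt+p$ to diverge. You instead split into subsequences with $nt+p\to\infty$ versus $nt+p$ bounded, using the crude bound $\|\mC_\circ\|_{\op}=O_p(\sqrt n)$ in the bounded case. That argument is also valid.

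Your route uses only standard elementary bounds at each step but needs regime bookkeeping. The paper's route is shorter and treats all growth regimes of $p$ and $t$ uniformly.
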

\begin{proof}
For fixed $r$, second moments give $\|\mG_{11}\|_{\op}=O_p(1)$ and $\|\mG_{21}^{\T}\mG_{21}-(p-r)\mI_r\|_{\op}=O_p(\sqrt p+1)$. Since $\|\mR_n\|_{\op}=O_p(\sqrt{nt})$, the first bound follows. For unit $\vu\in\R^N$, $\vu^{\T}\mT_\circ\vu$ has law $\chi_p^2-p$. The moment generating function $(1-2s)^{-p/2}$ of $\chi_p^2$ and Chernoff's inequality give the following bound for $x>0$ \citep[Section~4.1]{LaurentMassart2000}:
\[
 \Pbb\{|\chi_p^2-p|>2\sqrt{px}+2x\}\le2e^{-x}.
\]
Take $x=4N$ on a $1/4$-net of at most $9^N$ points and apply Lemma~\ref{lem:net-reconstruction}. The failure probability is at most $2\exp\{-(4-\log9)N\}$, giving the bound for $\mT_\circ$.

Given $\mG_{11},\mG_{21}$ and the labels, the $N$ rows of $\mC_\circ$ are independent centered Gaussian vectors with covariance
\[
 \mKappa_n=(\mR_n+\mG_{11})^{\T}(\mR_n+\mG_{11})
                                      +\mG_{21}^{\T}\mG_{21}.
\]
Here
\begin{equation*}
 \mKappa_n=nt\mD_0+p\mI_r
           +O_p(nt/\sqrt n+\sqrt{nt}+\sqrt p+1).
\end{equation*}
Writing the conditional rows as standard Gaussians times $\mKappa_n^{1/2}$, fixed-dimensional Wishart moments give
\[
 \|N^{-1}\mC_\circ^{\T}\mC_\circ-\mKappa_n\|_{\op}
                  =O_p(N^{-1/2}\|\mKappa_n\|_{\op}).
\]
If $nt+p\to\infty$, all remainders are $o_p(nt+p)$, and $N/n\to1$ gives \eqref{eq:gauss-C-energy}. For bounded $t+\gamma$, the same calculation gives $\mKappa_n/n=t\mD_0+\gamma\mI_r+o_p(1)$ without requiring $nt+p$ to diverge, proving \eqref{eq:gauss-small-blocks}.
\end{proof}

Write
\begin{equation*}
 v_n=t_n^{-1}+\gamma_nt_n^{-2},\qquad
 I_n=v_n^{-1},\qquad \theta_n=v_n\log n.
\end{equation*}

\begin{lemma}[Residual covariance with a nonspherical signal shape]
\label{lem:gauss-residual}
If $I_n\to\infty$, then
\begin{equation}
 \|\mB_\circ^2-\mOmega_n(t_n)\|_{\op}=o_p(v_n),
 \qquad \|\mA_\circ-\mI_r\|_{\op}=O_p(v_n).
 \label{eq:gauss-residual-cov}
\end{equation}
If also $\theta_n=O(1)$, the representation of Lemma~\ref{lem:gauss-Haar} can be chosen so that
\begin{equation}
 \max_i\|\sqrt n\,\widetilde V_{i*}
       -\vq_{Z_i}^{\T}-\vz_i^{\T}\mOmega_n(t_n)^{1/2}\|_2=o_p(1).
 \label{eq:gauss-critical-coupling}
\end{equation}
\end{lemma}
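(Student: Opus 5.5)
The plan is to read the second block equation in \eqref{eq:gauss-canonical-block-equations} as a Sylvester equation for $\mH_\perp$, namely $\mH_\perp\mJ_\circ-\mT_\circ\mH_\perp=\mC_\circ\mA_\circ$. Solving it by a Neumann series gives $\mH_\perp\approx\mC_\circ\mA_\circ\mJ_\circ^{-1}$. Then $\mB_\circ^2=\mH_\perp^{\T}\mH_\perp\approx\mJ_\circ^{-1}\mA_\circ\mC_\circ^{\T}\mC_\circ\mA_\circ\mJ_\circ^{-1}$, and \eqref{eq:gauss-C-energy} together with $\mJ_\circ\approx nt\mD_0$ and $\mA_\circ\approx\mI_r$ gives $(nt\mD_0)^{-1}n(nt\mD_0+p\mI_r)(nt\mD_0)^{-1}=\mOmega_n(t_n)$. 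Throughout we use the following scale facts. The condition $I_n\to\infty$ means $t^2/(t+\gamma)\to\infty$, so $t\to\infty$ and $\sqrt\gamma/t\to0$. By Lemma~\ref{lem:gauss-blocks}, this gives $\|\mT_\circ\|_{\op}\le C(\sqrt{np}+n)=o(nt)$ and $\|\mF_\circ\|_{\op}=O_p(\sqrt{nt}+\sqrt p+1)=o_p(nt)$. Also $nt+p\to\infty$, so \eqref{eq:gauss-C-energy} applies and $\|\mC_\circ\|_{\op}^2=O_p(n(nt+p))$.

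\textbf{A priori step.} $\mJ_\circ$ carries the leading $r$ eigenvalues of \eqref{eq:gauss-Gram-blocks}. Courant--Fischer with the first $r$ coordinates as test space gives
\[
\lambda_{\min}(\mJ_\circ)\ge nt\lambda_{\min}(\mD_{0,n})-\|\mF_\circ\|_{\op}\ge c\,nt
\]
with probability tending to one. Because $\|\mT_\circ\|_{\op}=o(nt)$, the Sylvester operator $\mX\mapsto\mX\mJ_\circ-\mT_\circ\mX$ is invertible, and
\[
\mH_\perp=\sum_{k\ge0}\mT_\circ^k\mC_\circ\mA_\circ\mJ_\circ^{-k-1},
\]
which converges geometrically. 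Hence
\[
\|\mB_\circ\|_{\op}^2=\|\mH_\perp\|_{\op}^2\le\frac{C\|\mC_\circ\|_{\op}^2}{(nt)^2}=O_p\!\left(\frac{n(nt+p)}{(nt)^2}\right)=O_p(v_n).
\]
Since $\mA_\circ^2=\mI_r-\mB_\circ^2$ with $\mA_\circ\succeq0$, this already yields $\|\mA_\circ-\mI_r\|_{\op}=O_p(v_n)$.

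\textbf{Refinement step.} From the first block equation,
\[
\mJ_\circ=\mA_\circ^{-1}(nt\mD_{0,n}+\mF_\circ)\mA_\circ+\mA_\circ^{-1}\mC_\circ^{\T}\mH_\perp .
\]
Here $\|\mC_\circ^{\T}\mH_\perp\|_{\op}\le\|\mC_\circ\|_{\op}^2/(cnt)=O_p(nt\,v_n)$. Combined with $\mD_{0,n}=\mD_0+O_p(n^{-1/2})$ and the bound on $\mF_\circ$, this gives $\mJ_\circ=nt\mD_0(\mI_r+\mE_n)$ with $\|\mE_n\|_{\op}=o_p(1)$. The terms $k\ge1$ of the Neumann series contribute a relative error $O(\|\mT_\circ\|_{\op}/(nt))=O(1/t+\sqrt\gamma/t)=o(1)$. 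Substituting these approximations into $\mH_\perp^{\T}\mH_\perp$ and using \eqref{eq:gauss-C-energy} gives $\mB_\circ^2=\mOmega_n(t_n)+o_p(v_n)$. The comparability \eqref{eq:gauss-comparability}, i.e.\ $\mOmega_n(t)\asymp v_n\mI_r$, is what turns all the relative $o(1)$ errors, including those from noncommuting factors, into $o_p(v_n)$ in operator norm. This proves \eqref{eq:gauss-residual-cov}.

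\textbf{Coupling.} Assume in addition $\theta_n=O(1)$, so $v_n=O(1/\log n)\to0$. Start from \eqref{eq:gauss-row-Haar} of Lemma~\ref{lem:gauss-Haar}. First replace $\vq_{Z_i}^{\T}\mA_\circ$ by $\vq_{Z_i}^{\T}$; this costs at most $q_{\max}O_p(v_n)=o_p(1)$. Next replace $\vz_i^{\T}\mB_\circ$ by $\vz_i^{\T}\mOmega_n(t_n)^{1/2}$. Since $\lambda_{\min}(\mOmega_n)\ge c v_n$, the square-root perturbation bound
\[
\|\mB_\circ-\mOmega_n^{1/2}\|_{\op}\le\frac{\|\mB_\circ^2-\mOmega_n\|_{\op}}{\lambda_{\min}(\mOmega_n)^{1/2}}=o_p(\sqrt{v_n})
\]
applies. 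With $\max_i\|\vz_i\|_2=O_p(\sqrt{\log n})$, this replacement costs $o_p(\sqrt{v_n\log n})=o_p(\sqrt{\theta_n})=o_p(1)$. The independence of $\mZ_*$ from $\mA_\circ$, and hence from $\mB_\circ$, in Lemma~\ref{lem:gauss-Haar} makes these bounds legitimate, giving \eqref{eq:gauss-critical-coupling}.

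The main obstacle is the refinement step. It requires that relative errors in $\mJ_\circ^{-1}$, $\mA_\circ$, the Neumann tail and \eqref{eq:gauss-C-energy} stay $o(1)$ uniformly across the two regimes $p\lesssim nt$ and $p\gg nt$, in which the cross term and the quadratic noise term dominate respectively. To handle this, I would first work with the scaled matrix $v_n^{-1}\mB_\circ^2$ and write every error relative to $n(nt+p)/(nt)^2\asymp v_n$.
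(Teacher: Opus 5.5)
Your proposal is correct and follows essentially the paper's route. Both proofs get an a priori $O_p(\sqrt{v_n})$ bound on $\mH_\perp$ from the second block equation, using $\lambda_{\min}(\mJ_\circ)\gtrsim nt$ and $\|\mT_\circ\|_{\op}=o(nt)$. Both then derive $\mJ_\circ=nt\mD_0+o_p(nt)$ from the first block equation and $\mH_\perp=(nt)^{-1}\mC_\circ\mD_0^{-1}+o_p(\sqrt{v_n})$. They finish with \eqref{eq:gauss-C-energy}, the Sylvester square-root bound, and \eqref{eq:gauss-row-Haar}.

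The differences are cosmetic. You solve the Sylvester equation by a Neumann series in the canonical basis, whereas the paper bounds each eigenvector column through $(s_j\mI_N-\mT_\circ)^{-1}$ and then rearranges around $nt\mD_0$. The two-regime "obstacle" you flag is already handled by the $n(nt+p)$ normalization in \eqref{eq:gauss-C-energy}.
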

\begin{proof}
Since $v_n\to0$, we have $t_n\to\infty$ and $\sqrt{\gamma_n}/t_n\to0$.
Lemma~\ref{lem:gauss-blocks} gives
\[
 \|\mF_\circ\|_{\op}/(nt)=o_p(1),\qquad
 \|\mT_\circ\|_{\op}/(nt)=o_p(1),\qquad
 \|\mC_\circ\|_{\op}/(nt)=O_p(\sqrt{v_n}).
\]
After division by $nt$, \eqref{eq:gauss-Gram-blocks} is an $o_p(1)$ perturbation of $\diag(\mD_0,\bm0)$. Weyl's inequality separates the leading $r$ eigenvalues, which lie between $cnt$ and $Cnt$ before scaling. Write the eigenvector blocks as $\vo_j,\vh_j$. The bottom equation is
\[
 (s_j\mI_N-\mT_\circ)\vh_j=\mC_\circ\vo_j.
\]
Since $s_j\ge cnt$ and $\|\mT_\circ\|_{\op}=o_p(nt)$, the inverse has norm $O_p((nt)^{-1})$, giving $\|\vh_j\|_2=O_p(\sqrt{v_n})$. Combining the columns and rotating gives $\|\mH_\circ\|_{\op}=O_p(\sqrt{v_n})$. By \eqref{eq:gauss-canonical-identities}, $\bm0\preceq\mA_\circ\preceq\mI_r$ and
\[
 \|\mA_\circ-\mI_r\|_{\op}
 \le \|\mI_r-\mA_\circ^2\|_{\op}
 =\|\mH_\circ\|_{\op}^2=O_p(v_n).
\]

Since $\mA_\circ\to_p\mI_r$, the first block equation in \eqref{eq:gauss-canonical-block-equations} gives
\[
 \frac{\mJ_\circ}{nt}
 =\mA_\circ^{-1}
  \left\{\left(\mD_{0,n}+\frac{\mF_\circ}{nt}\right)\mA_\circ
             +\frac{\mC_\circ^{\T}\mH_\perp}{nt}\right\}
 =\mD_0+o_p(1),
\]
where $\|\mC_\circ^{\T}\mH_\perp\|_{\op}/(nt)=O_p(v_n)$.
Set $\mE_J=\mJ_\circ-nt\mD_0$, so $\|\mE_J\|_{\op}/(nt)=o_p(1)$. The second block equation gives
\begin{equation*}
 \mH_\perp-\frac1{nt}\mC_\circ\mD_0^{-1}
 =\frac1{nt}\{\mC_\circ(\mA_\circ-\mI_r)
       +\mT_\circ\mH_\perp-\mH_\perp\mE_J\}\mD_0^{-1}.
\end{equation*}
The three terms satisfy
\begin{align*}
 \frac{\|\mC_\circ(\mA_\circ-\mI_r)\|_{\op}}{nt}
 &\le \frac{\|\mC_\circ\|_{\op}}{nt}
           \|\mA_\circ-\mI_r\|_{\op}
   =O_p(v_n^{3/2}),\\
 \frac{\|\mT_\circ\mH_\perp\|_{\op}}{nt}
 &\le \frac{\|\mT_\circ\|_{\op}}{nt}\|\mH_\perp\|_{\op}
   =o_p(\sqrt{v_n}),\\
 \frac{\|\mH_\perp\mE_J\|_{\op}}{nt}
 &\le \|\mH_\perp\|_{\op}\frac{\|\mE_J\|_{\op}}{nt}
   =o_p(\sqrt{v_n}).
\end{align*}
The first bound is also $o_p(\sqrt{v_n})$ since $v_n\to0$. Put $\mH_0=(nt)^{-1}\mC_\circ\mD_0^{-1}$. Since $\|\mD_0^{-1}\|_{\op}=1$,
\[
 \|\mH_0\|_{\op}=O_p(\sqrt{v_n}),\qquad
 \|\mH_\perp-\mH_0\|_{\op}=o_p(\sqrt{v_n}).
\]
Consequently,
\begin{align*}
 \|\mH_\perp^{\T}\mH_\perp-\mH_0^{\T}\mH_0\|_{\op}
 &\le (\|\mH_\perp\|_{\op}+\|\mH_0\|_{\op})
                           \|\mH_\perp-\mH_0\|_{\op}\\
 &=o_p(v_n).
\end{align*}
Using $\mB_\circ^2=\mH_\perp^{\T}\mH_\perp$ and \eqref{eq:gauss-C-energy} now yields
\begin{align*}
 \mB_\circ^2
 &=\frac1{n^2t^2}\mD_0^{-1}\mC_\circ^{\T}\mC_\circ\mD_0^{-1}
                                                +o_p(v_n)\\
 &=\frac1t\mD_0^{-1}+\frac{p}{nt^2}\mD_0^{-2}+o_p(v_n)
  =\mOmega_n(t_n)+o_p(v_n),
\end{align*}
since $n(nt+p)/(n^2t^2)=v_n$. No commutation with $\mD_0$ is used. Together with the bound on $\|\mA_\circ-\mI_r\|_{\op}$, this proves \eqref{eq:gauss-residual-cov}.

For the rowwise approximation, set
\[
 \mX_n=\frac{\mB_\circ}{\sqrt{v_n}},\qquad
 \mW_n=\left(\frac{\mOmega_n(t_n)}{v_n}\right)^{1/2}.
\]
The covariance bound gives $\|\mX_n^2-\mW_n^2\|_{\op}=o_p(1)$. By \eqref{eq:gauss-comparability}, the eigenvalues of $\mW_n^2$ lie in $[d_{\max}^{-2},1]$. Thus, with probability tending to one, $\mX_n,\mW_n\succeq c\mI_r$ for fixed $c>0$. Set $\mDelta_n=\mX_n-\mW_n$. The Sylvester equation
\[
 \mX_n\mDelta_n+\mDelta_n\mW_n=\mX_n^2-\mW_n^2
\]
has the integral solution \citep[Theorem~9.2]{BhatiaRosenthal1997}
\[
 \mDelta_n=\int_0^\infty
 e^{-u\mX_n}(\mX_n^2-\mW_n^2)e^{-u\mW_n}\,du.
\]
It follows that
\[
 \|\mDelta_n\|_{\op}
 \le\frac{1}{2c}\|\mX_n^2-\mW_n^2\|_{\op}=o_p(1).
\]
Rescaling gives
\[
 \|\mB_\circ-\mOmega_n(t_n)^{1/2}\|_{\op}=o_p(\sqrt{v_n}).
\]
Apply \eqref{eq:gauss-row-Haar}, $\|\vq_k\|_2\le q_{\max}$, and $\max_i\|\vz_i\|_2=O_p(\sqrt{\log n})$ to obtain
\begin{align*}
 &\max_i\|\sqrt n\,\widetilde V_{i*}
              -\vq_{Z_i}^{\T}-\vz_i^{\T}\mOmega_n(t_n)^{1/2}\|_2\\
 &\quad\le
 \max_i\|\sqrt n\,\widetilde V_{i*}
              -\vq_{Z_i}^{\T}\mA_\circ-\vz_i^{\T}\mB_\circ\|_2
       +q_{\max}\|\mA_\circ-\mI_r\|_{\op}\\
 &\qquad\quad+
       \max_i\|\vz_i\|_2\,
                 \|\mB_\circ-\mOmega_n(t_n)^{1/2}\|_{\op}\\
 &\quad=O_p\!\left(\sqrt{\frac{\log n}{n}}\right)
         +O_p(v_n)+o_p(\sqrt{v_n\log n})=o_p(1),
\end{align*}
where $\theta_n=v_n\log n=O(1)$ gives the last equality and \eqref{eq:gauss-critical-coupling}.
\end{proof}

\subsection{Extreme observations and signals below the critical scale}
Let $\overline B_r(\bm0,R)=\{\vx\in\R^r:\|\vx\|_2\le R\}$ denote the closed Euclidean ball.
For nonempty compact sets, $d_{\mathrm H}$ denotes Hausdorff distance, the maximum of the two directed distances.

For general sample-cloud limits, see \citet[Section~4]{BalkemaNolde2010}.

\begin{lemma}[Gaussian extreme point sets]
\label{lem:gauss-extreme-cloud}
Let $\vz_i\overset{\mathrm{iid}}\sim N(\bm0,\mI_r)$ be independent of the latent labels, with $r,K,\pi$ fixed.
Simultaneously for every $k$,
\begin{equation*}
 d_{\mathrm H}\!\left(
  \{\vz_i/\sqrt{\log n}:Z_i=k\},
                 \overline B_r(\bm0,\sqrt2)\right)\to_p0.
\end{equation*}
Every fixed nonempty open subset of the open ball contains a rescaled point from each group with probability tending to one. This holds simultaneously for any fixed finite collection of such subsets.
\end{lemma}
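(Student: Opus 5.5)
Condition on the labels throughout. The $\vz_i$ are independent of the labels, so within each group the vectors $\{\vz_i:Z_i=k\}$ form an iid $N(\bm0,\mI_r)$ sample of size $n_k$. By Lemma~\ref{lem:count-concentration}, with probability tending to one, $n\pi_{\min}/2\le n_k\le n$ for every $k$. Since $K$ is fixed, it suffices to prove both directed Hausdorff bounds for a single iid sample of size $m$ with $m\in[cn,n]$. We also need the bounds to be uniform over such $m$. Then we take a union bound over the $K$ groups.

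\emph{Outer bound.} We show $\max_{i}\|\vz_i\|_2\le\sqrt{(2+\delta)\log n}$ with probability tending to one, for every fixed $\delta>0$. The variable $\|\vz_i\|_2^2$ is $\chi^2_r$, and its tail satisfies
\[
 \Pbb(\chi^2_r>x)\le C_r x^{r/2-1}e^{-x/2}
\]
for large $x$. Take $x=(2+\delta)\log n$. The tail is then $O(n^{-1-\delta/2}(\log n)^{r/2})$, and a union bound over all $n$ indices gives $o(1)$. This yields the directed distance from the rescaled point set to the ball: it is at most $\sqrt{2+\delta}-\sqrt2$, which can be made arbitrarily small.

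\emph{Inner bound and open subsets.} Fix an open set $\mathcal O\subset B_r(\bm0,\sqrt2)$. It contains a small ball $B(\vx_0,\eta)$ with $\|\vx_0\|_2+\eta<\sqrt2$. Let
\[
 p_n=\Pbb\bigl(\vz/\sqrt{\log n}\in B(\vx_0,\eta)\bigr).
\]
Integrate the Gaussian density over this ball. The density is at least $(2\pi)^{-r/2}\exp(-R^2\log n/2)$ with $R=\|\vx_0\|_2+\eta$, and the ball has volume of order $(\log n)^{r/2}\eta^r$. Hence
\[
 p_n\ge c\,(\log n)^{r/2}n^{-R^2/2},\qquad R^2/2<1 .
\]
Consequently $m p_n\ge c' n^{1-R^2/2}\to\infty$ uniformly for $m\ge cn$. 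The probability that no point of the group lands in the ball is $(1-p_n)^m\le\exp(-mp_n)\to0$. This proves the claim about open subsets. A union bound extends it to any fixed finite collection of open sets and to all $K$ groups.

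For the Hausdorff inner direction, fix $\delta>0$ and cover the compact ball $\overline B_r(\bm0,\sqrt2)$ by finitely many open balls of radius $\delta$. Their centers lie in $B_r(\bm0,\sqrt2-\delta)$, so each of these balls is contained in the open ball of radius $\sqrt2$. By the finite-collection statement, each covering ball contains a rescaled point from every group with probability tending to one. Hence every point of the closed ball is within $2\delta$ of the rescaled point set.

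The main obstacle is minor: the count $n_k$ is random. Conditioning on the labels and using the uniform bounds above over $m\in[cn,n]$ handles it. All remaining estimates are elementary Gaussian tail computations in fixed dimension $r$.
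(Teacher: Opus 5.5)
Your proof is correct and follows the paper's argument. The outer direction uses a radial (chi-square) tail bound with a union bound over all $n$ indices. Open sets and the inner direction use the lower bound $p_n\ge c(\log n)^{r/2}n^{-R^2/2}$ on small-ball probabilities, together with $(1-p_n)^m\le e^{-mp_n}$ and a finite net. Conditioning on the labels and using the count event, instead of the paper's unconditional bound $e^{-n\pi_kp_n}$, makes no difference.

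There is one small slip in the covering step. Finitely many $\delta$-balls centered in $B_r(\bm0,\sqrt2-\delta)$ cannot cover $\overline B_r(\bm0,\sqrt2)$, so the cover as written does not exist. Either intersect the covering balls with the open ball, which gives nonempty open subsets and keeps your $2\delta$ bound, or, as the paper does, net a slightly smaller ball and let the shrinkage vanish.
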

\begin{proof}
For $a>\sqrt2$, radial integration gives
\[
 \Pbb\{\max_i\|\vz_i\|_2>a\sqrt{\log n}\}
 \le C_r n\{1+a\sqrt{\log n}\}^{r}\,n^{-a^2/2}\longrightarrow0.
\]
For a ball $B(\vu,\delta)$ with $\|\vu\|_2+\delta<\sqrt2$, integrating the Gaussian density gives
\[
 \Pbb\{\vz_i/\sqrt{\log n}\in B(\vu,\delta)\}
 \ge c_{r,\delta}(\log n)^{r/2}
            n^{-(\|\vu\|_2+\delta)^2/2}.
\]
The expected count in group $k$ is $n\pi_k$ times this probability and diverges. Independence bounds the empty-set probability by the exponential of minus this expectation. Finite nets of slightly smaller balls give the inner Hausdorff bound. Letting the shrinkage vanish and using the outer bound proves convergence. A finite union bound covers all groups and chosen neighborhoods.
\end{proof}

\begin{lemma}[Bounded effective signal implies nonvanishing residual]
\label{lem:gauss-bounded-I}
For every fixed $M<\infty$, along every sequence with $I_n\le M$, there is $c_M>0$ such that
\begin{equation*}
 \Pbb\{\|\mB_\circ\|_{\op}\ge c_M\}\longrightarrow1.
\end{equation*}
\end{lemma}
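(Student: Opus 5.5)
We argue by contradiction along subsequences. Suppose that along some subsequence with $I_n\le M$ we have $\|\mB_\circ\|_{\op}\to_p0$, or more precisely $\Pbb\{\|\mB_\circ\|_{\op}<\delta_n\}\not\to0$ for every $\delta_n\to0$. Recall that $\mB_\circ^2=\mH_\perp^{\T}\mH_\perp=\mI_r-\mA_\circ^2$. Small $\mB_\circ$ therefore means $\mA_\circ\approx\mI_r$, so the leading invariant subspace nearly coincides with the oracle signal space. We will show that this is incompatible with the block equations \eqref{eq:gauss-canonical-block-equations} when the effective signal is bounded.

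The key identity is the second block equation, $\mC_\circ\mA_\circ=\mH_\perp\mJ_\circ-\mT_\circ\mH_\perp$. From it,
\[
 \|\mC_\circ\mA_\circ\|_{\op}\le\|\mH_\perp\|_{\op}\bigl(\|\mJ_\circ\|_{\op}+\|\mT_\circ\|_{\op}\bigr).
\]
We need three scale estimates:
\begin{enumerate}[label=\textup{(\roman*)}]
\item a lower bound $\sigma_{\min}(\mC_\circ)\ge c\sqrt{n(nt+p)}$ with probability tending to one;
\item an upper bound $\|\mJ_\circ\|_{\op}\le C(nt+\sqrt{np}+n)$;
\item the bound $\|\mT_\circ\|_{\op}\le C(\sqrt{np}+n)$, which is already in Lemma~\ref{lem:gauss-blocks}.
\end{enumerate}
For (ii), $\mJ_\circ$ is a compression of the shifted Gram matrix, so $\|\mJ_\circ\|_{\op}\le\|\mathcal Y^{\T}\mathcal Y-p\mI\|_{\op}$. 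Lemma~\ref{lem:gauss-blocks} bounds that norm by $C(nt+\sqrt{np}+n)+\|\mF_\circ\|_{\op}+\|\mC_\circ\|_{\op}$.

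For (i), conditionally on $\mG_{11},\mG_{21}$, the rows of $\mC_\circ$ are iid $N(\bm0,\mKappa_n)$. Since $\mKappa_n\succeq(\mR_n+\mG_{11})^{\T}(\mR_n+\mG_{11})+\mG_{21}^{\T}\mG_{21}$, the covariance satisfies $\lambda_{\min}(\mKappa_n)\ge c(nt+p)$ with probability tending to one. This needs some care. When $nt+p\to\infty$ it follows from \eqref{eq:gauss-C-energy}. When $nt+p$ stays bounded (along a further subsequence), $\mKappa_n$ still contains $\mG_{21}^{\T}\mG_{21}+\mG_{11}^{\T}\mG_{11}$ plus a bounded shift, and a fixed-dimensional Gaussian argument gives $\lambda_{\min}\ge c$. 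Wishart concentration with $N\asymp n$ rows then gives $\sigma_{\min}(\mC_\circ)^2\ge cn\lambda_{\min}(\mKappa_n)$.

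On the event $\|\mH_\perp\|_{\op}=\|\mB_\circ\|_{\op}<\delta$ with $\delta\le1/2$, we have $\sigma_{\min}(\mA_\circ)\ge1/2$. Combining the estimates gives
\[
 c\sqrt{n(nt+p)}\le C\delta\,(nt+\sqrt{np}+n).
\]
The bounded-effective-signal hypothesis enters here. The condition $I_n\le M$ means $t^2/(t+\gamma)\le M$. Set $x=nt$, $y=p$. If $t\le C$, then $nt+\sqrt{np}+n\le C\sqrt n\sqrt{nt+p}+Cn$ and $n\le\sqrt{n(nt+p)}$ as long as $nt+p\ge n$. If $nt+p<n$, we use $\lambda_{\min}(\mKappa_n)\ge c\max(nt+p,1)$; in the bounded case the right side is $O(n)$ while the left is at least $c\sqrt n$, so this case needs a refinement. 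If $t\to\infty$, then $t^2\le M(t+\gamma)$ forces $\gamma\ge t^2/M-t$, so $nt\le C\sqrt{np}$ and $nt+\sqrt{np}+n\le C\sqrt{n(nt+p)}+n$. In every regime the two sides are comparable up to constants, which forces $\delta\ge c_M$ and gives the contradiction.

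The main obstacle is the regime $t+\gamma$ bounded with $nt+p$ much smaller than $n$. Here the crude bounds are off by $\sqrt n$: $\|\mJ_\circ\|$ could be of order $n$ from $\mT_\circ$. To handle it, I would replace the crude bound on $\mJ_\circ-\mT_\circ$ acting on $\mH_\perp$ by a resolvent argument, working in the normalized scale of \eqref{eq:gauss-small-blocks}. In that scale $\|\mF_\circ\|/n$, $nt\mD_{0,n}/n$, and $\|\mC_\circ\|/n$ are all $O(1)$, while $\mT_\circ/n$ has a Marchenko--Pastur-type bulk of width of order $1+\sqrt\gamma$ not separated from the signal block. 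The leading $r$-space then genuinely mixes with $\Imv(\mQ)$. Concretely, one shows that $\Imv(\mV_\circ)$ cannot be nearly invariant: the vector $\mC_\circ\vo$ has norm at least $c\sqrt n$ in unnormalized units and is in generic position relative to the eigenbasis of $\mT_\circ$, by rotation invariance of $\mG_{12},\mG_{22}$ in the $N$-dimensional block. Hence the Rayleigh-quotient comparison of Courant--Fischer shows that some direction in $\Imv(\mQ)$ correlated with $\mC_\circ\vo$ must enter the top-$r$ space with weight bounded below. This is the step I expect to require the most care.
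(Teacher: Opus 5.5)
\paragraph{Assessment.} Your argument for the regime $t+\gamma\ge\epsilon_0$ (with $\epsilon_0>0$ small and fixed) is correct and essentially matches the paper's first case. There $nt+p=n(t+\gamma)\to\infty$, \eqref{eq:gauss-C-energy} gives $\|\mC_\circ\|_{\op}\ge cn\sqrt{t+\gamma}$, and $\|\mJ_\circ\|_{\op}+\|\mT_\circ\|_{\op}\le Cn(t+1+\sqrt\gamma)$. With $I_n\le M$, the second block equation then gives $h:=\|\mB_\circ\|_{\op}\ge c_{M,\epsilon_0}$.

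The genuine gap is the complementary regime $t+\gamma<\epsilon_0$. You flag it but do not close it, and refining the second block equation cannot close it. That equation only gives a lower bound on $h$ of order $\|\mC_\circ\|_{\op}/n$, which tends to zero as $t+\gamma\to0$: by \eqref{eq:gauss-small-blocks}, $\|\mC_\circ\|_{\op}\le2\sqrt{d_{\max}\epsilon_0}\,n$. Meanwhile the eigenvalues of $\mJ_\circ$ are of order $n$ and are not separated from the top $p$ eigenvalues of $\mT_\circ$, which are about $n-p$. Hence $X\mapsto X\mJ_\circ-\mT_\circ X$ has no useful lower bound. Your proposed repair (generic position of $\mC_\circ\vo$ relative to the eigenbasis of $\mT_\circ$, then an unspecified Courant--Fischer comparison) is not an argument, and it relies on this same small coupling. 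Separately, your claim that $\lambda_{\min}(\mKappa_n)\ge c$ with probability tending to one when $nt+p$ stays bounded is false. For $p=r$ and $nt\to0$ we have $\mKappa_n\approx\mG_{11}^{\T}\mG_{11}$, whose smallest eigenvalue falls below any fixed $c>0$ with non-vanishing probability.

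\paragraph{The missing idea.} In this regime the noise dominates spectrally, and the contradiction comes from the \emph{first} block equation. Write $s_j(\cdot)$ for singular values in nonincreasing order and $\mG_*$ for the $p\times(n-1)$ standard Gaussian part of $\mathcal Y$.
\begin{enumerate}
\item A $1/4$-net bound for $\mG_*\mG_*^{\T}/(n-1)$, using $p\le\epsilon_0 n$, gives $s_p(\mG_*)\ge0.9\sqrt n$.
\item Weyl's inequality with $\|\mathcal Y-\mG_*\|_{\op}=\|\mR_n\|_{\op}\le\sqrt{2ntd_{\max}}$ then gives $s_p(\mathcal Y)\ge\sqrt n\{0.9-\sqrt{2d_{\max}\epsilon_0}\}$.
\item Since $r\le p<n-1$, the eigenvalues of $\mJ_\circ$ are the leading $r$ eigenvalues of $\mathcal Y^{\T}\mathcal Y-p\mI_{n-1}$. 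Hence $\lambda_{\min}(\mJ_\circ)\ge s_p(\mathcal Y)^2-p\ge n/2$ for small $\epsilon_0$.
\item On the other hand, \eqref{eq:gauss-small-blocks} gives $\|nt\mD_{0,n}+\mF_\circ\|_{\op}\le3d_{\max}\epsilon_0n$.
\item If $h\le1/2$, then $\sigma_{\min}(\mA_\circ)\ge\sqrt3/2$. The top equation $\mA_\circ\mJ_\circ=(nt\mD_{0,n}+\mF_\circ)\mA_\circ+\mC_\circ^{\T}\mH_\perp$ then forces $(\sqrt3/4)n\le\{3d_{\max}\epsilon_0+\sqrt{d_{\max}\epsilon_0}\}n$. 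This is impossible for small $\epsilon_0$, so $h>1/2$.
\end{enumerate}
The mechanism is an eigenvalue mismatch. The noise makes every leading eigenvalue of the shifted Gram matrix at least $n/2$, while the compression to $\Imv(\mV_\circ)$ has norm only $O(\epsilon_0 n)$ and the coupling only $O(\sqrt{\epsilon_0}\,n)$. So the leading $r$-dimensional space cannot lie near the oracle space. No lower bound on the coupling $\mC_\circ$ is needed.
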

\begin{proof}
Put $h=\|\mH_\perp\|_{\op}=\|\mB_\circ\|_{\op}$, so that $\sigma_{\min}(\mA_\circ)=\sqrt{1-h^2}$.
First suppose $t+\gamma\ge\epsilon_0$ for small fixed $\epsilon_0>0$. Then $nt+p\to\infty$. By \eqref{eq:gauss-C-energy} and $\mD_0\succeq\mI_r$, with probability tending to one,
\[
 \|\mC_\circ\|_{\op}\ge c n\sqrt{t+\gamma},\qquad
 \|\mJ_\circ\|_{\op}+\|\mT_\circ\|_{\op}
                                  \le Cn(t+1+\sqrt\gamma).
\]
The second bound follows by bounding the full matrix in \eqref{eq:gauss-Gram-blocks}, using $\sqrt{t+\gamma}\le t+1+\sqrt\gamma$ and $\|\mF_\circ\|_{\op}/\{n(t+1+\sqrt\gamma)\}=o_p(1)$. 
If $h\le1/2$, then
$\sigma_{\min}(\mA_\circ)\ge\sqrt{3}/2$.
The second equation in \eqref{eq:gauss-canonical-block-equations} therefore gives
\[
 h\ge c\frac{\sqrt{t+\gamma}}{t+1+\sqrt\gamma},
\]
because
\[
 \|\mC_\circ\mA_\circ\|_{\op}
 \ge \sigma_{\min}(\mA_\circ)\|\mC_\circ\|_{\op},
\]
whereas the remaining side is bounded by
$h(\|\mJ_\circ\|_{\op}+\|\mT_\circ\|_{\op})$.
Putting $u=\sqrt{t+\gamma}$ and using
$I_n=t^2/(t+\gamma)\le M$ gives
\[
 t\le\sqrt M\,u,\qquad
 \sqrt\gamma\le u,\qquad
 1\le u/\sqrt{\epsilon_0},
\]
and hence $h\ge c_{M,\epsilon_0}>0$.
The case $h>1/2$ is immediate.

Now suppose $t+\gamma<\epsilon_0$, and let $\mG_*$ be the $p\times(n-1)$ standard Gaussian noise matrix in $\mathcal Y$.
Write $s_j(\cdot)$ for singular values in nonincreasing order.
For every unit $\vu\in\R^p$,
$\|\mG_*^{\T}\vu\|_2^2\sim\chi_{n-1}^2$.
Applying the chi-square bound with $x=a(n-1)$, for fixed sufficiently small $a>0$, on a $1/4$-net of the unit sphere of cardinality at most $9^p$, and choosing
$\epsilon_0<a/(2\log 9)$, gives
\[
 \left\|
 \frac{\mG_*\mG_*^{\T}}{n-1}-\mI_p
 \right\|_{\op}
 \le 4(\sqrt a+a)<0.1
\]
with probability tending to one.
Hence
\[
 s_p(\mG_*)\ge\sqrt{0.9(n-1)}
             \ge0.9\sqrt n
\]
for all sufficiently large $n$.

Intersect with $\{\|\mS_n\|_{\op}\le2\}$, whose probability tends to one by Lemma~\ref{lem:gauss-coordinates}. On this event,
\[
 \|\mathcal Y-\mG_*\|_{\op}=\|\mR_n\|_{\op}
 \le\sqrt{2ntd_{\max}}.
\]
Since $r\le p<n-1$ for all sufficiently large $n$ in this case, singular value perturbation gives
\[
 s_p(\mathcal Y)
 \ge s_p(\mG_*)-\|\mathcal Y-\mG_*\|_{\op}
 \ge\sqrt n\{0.9-\sqrt{2d_{\max}\epsilon_0}\}.
\]
Choose $\epsilon_0$ so that the expression in braces is positive and
$(0.9-\sqrt{2d_{\max}\epsilon_0})^2-\epsilon_0\ge1/2$.
The eigenvalues of $\mJ_\circ$ are the leading $r$ eigenvalues of $\mathcal Y^{\T}\mathcal Y-p\mI_{n-1}$, so
\[
\begin{aligned}
 \lambda_{\min}(\mJ_\circ)
 &=s_r(\mathcal Y)^2-p\ge s_p(\mathcal Y)^2-p\\
 &\ge n\{(0.9-\sqrt{2d_{\max}\epsilon_0})^2-\epsilon_0\}
 \ge n/2.
\end{aligned}
\]
Also, \eqref{eq:gauss-small-blocks} gives
\[
 \|nt\mD_{0,n}+\mF_\circ\|_{\op}\le3d_{\max}\epsilon_0 n,
 \qquad
 \|\mC_\circ\|_{\op}\le2\sqrt{d_{\max}\epsilon_0}\,n
\]
with probability tending to one.
If $h\le1/2$, the top block equation in \eqref{eq:gauss-canonical-block-equations} would imply
\[
 \frac{\sqrt3}{4}n
 \le\|\mA_\circ\mJ_\circ\|_{\op}
 \le\{3d_{\max}\epsilon_0+\sqrt{d_{\max}\epsilon_0}\}\,n,
\]
a contradiction for small enough $\epsilon_0$. Thus $h>1/2$, completing the proof.
\end{proof}

\begin{lemma}[A large residual produces an incorrect between-component sign]
\label{lem:gauss-large-residual}
If $\|\mB_\circ\|_{\op}^2\log n\to_p\infty$, then
\begin{equation}
 \Pbb\{\exists i<j:Z_i\ne Z_j,\ \widehat P_{ij}>0\}\longrightarrow1.
 \label{eq:gauss-false-cross}
\end{equation}
In particular, \eqref{eq:gauss-false-cross} holds if $\theta_n\to\infty$.
\end{lemma}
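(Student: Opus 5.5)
The plan is to read the sign of $\widehat P_{ij}$ off the conditional Gaussian representation of Lemma~\ref{lem:gauss-Haar}. Since $\widetilde{\mV}$ is an orthonormal basis of $\Imv(\widehat{\mP})$, we have $n\widehat P_{ij}=(\sqrt n\widetilde V_{i*})(\sqrt n\widetilde V_{j*})^{\T}$. We work on the count event and put $h=\|\mB_\circ\|_{\op}\le1$. By \eqref{eq:gauss-row-Haar}, $\sqrt n\widetilde V_{i*}=\vq_{Z_i}^{\T}\mA_\circ+\vz_i^{\T}\mB_\circ+\vr_i^{\T}$, where $\max_i\|\vr_i\|_2=O_p(\sqrt{\log n/n})$. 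Expanding the product for a pair with $Z_i=k\ne\ell=Z_j$ gives
\[
 n\widehat P_{ij}=\vz_i^{\T}\mB_\circ^2\vz_j
 +\vq_k^{\T}\mA_\circ^2\vq_\ell
 +\vq_k^{\T}\mA_\circ\mB_\circ\vz_j+\vz_i^{\T}\mB_\circ\mA_\circ\vq_\ell
 +(\text{remainder}).
\]
The main term is the first one. The second is at most $q_{\max}^2$, and the two cross terms are at most $2q_{\max}h\max_i\|\vz_i\|_2=O_p(h\sqrt{\log n})$. Every row has norm $O_p(\sqrt{\log n})$, so the remainder is at most $O_p(\sqrt{\log n}\cdot\sqrt{\log n/n})=o_p(1)$.

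Next we make the first term large and positive. Let $\vu$ be a unit top eigenvector of $\mB_\circ$, so that $\mB_\circ\vu=h\vu$. We want an $i\in\mathcal G_1$ and a $j\in\mathcal G_2$ whose $\vz_i/\sqrt{\log n}$ and $\vz_j/\sqrt{\log n}$ both lie near $\vu$; then $\vz_i^{\T}\mB_\circ^2\vz_j\ge c\,h^2\log n$. Two facts make this possible. First, $\mB_\circ=(\mI_r-\mA_\circ^2)^{1/2}$ is a function of $\mA_\circ$. Second, in the realization of Lemma~\ref{lem:gauss-Haar}, $\mZ_*$ is independent of the labels and of $\mA_\circ$.

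The random direction $\vu$ is the step needing care, and I would handle it with a finite net. Fix a finite $\delta$-net $\{\vu_m\}$ of $S^{r-1}$ and apply Lemma~\ref{lem:gauss-extreme-cloud} to the finitely many open balls $B(\vu_m,\delta)$, which lie inside the ball of radius $\sqrt2$. With probability tending to one, every group has a rescaled point in every such ball. We then take the net point nearest $\vu$ and pick $i\in\mathcal G_1$, $j\in\mathcal G_2$ inside its ball. Writing $\vz_i=\sqrt{\log n}(\vu+\ve_i)$ and $\vz_j=\sqrt{\log n}(\vu+\ve_j)$ with $\|\ve_i\|_2,\|\ve_j\|_2\le2\delta$, we get
\[
 \vz_i^{\T}\mB_\circ^2\vz_j\ge h^2\log n\,\bigl(1-4\delta-4\delta^2\bigr)\ge\tfrac12h^2\log n
\]
for small fixed $\delta$. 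Hence, with $x_n=h\sqrt{\log n}$,
\[
 n\widehat P_{ij}\ge\tfrac12x_n^2-Cx_n-C-o_p(1).
\]
The hypothesis says $x_n\to_p\infty$, so the right-hand side is positive with probability tending to one. This proves \eqref{eq:gauss-false-cross}.

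For the statement under $\theta_n\to\infty$, I would argue along subsequences. On a subsequence with $I_n\to\infty$, Lemma~\ref{lem:gauss-residual} and the comparability \eqref{eq:gauss-comparability} give $\|\mB_\circ\|_{\op}^2\ge\lambda_{\max}(\mOmega_n(t_n))-o_p(v_n)\ge c\,v_n$. Then $\|\mB_\circ\|_{\op}^2\log n\ge c\,\theta_n\to\infty$. On a subsequence with $I_n\le M$, Lemma~\ref{lem:gauss-bounded-I} gives $\|\mB_\circ\|_{\op}\ge c_M$, so $\|\mB_\circ\|_{\op}^2\log n\to_p\infty$ trivially. Every subsequence has a further subsequence of one of these two types, so the first part applies along it. Hence \eqref{eq:gauss-false-cross} holds along the full sequence.
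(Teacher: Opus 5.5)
Your proof is correct. The reduction of the $\theta_n\to\infty$ case, via subsequences, Lemma~\ref{lem:gauss-residual}, \eqref{eq:gauss-comparability} and Lemma~\ref{lem:gauss-bounded-I}, is the same as the paper's. The first part takes a genuinely different route: the two proofs place the witness points at different scales.

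The paper conditions on the labels and $\mA_\circ$, treating $b=\|\mB_\circ\|_{\op}$ and the top eigenvector $\vu$ as fixed. It selects indices with $\|\vz_i-(M_0/b)\vu\|_2\le 1/2$ for a fixed $M_0>4(q_{\max}+1)$. A direct Gaussian density bound gives selection probability at least $c\exp(-C/b^2)$. Hence each of groups 1 and 2 has an expected number of selected points of order at least $\sqrt n$ once $b^2\log n\ge 2C$. Then $\vw_i=\mA_\circ\vq_{Z_i}+\mB_\circ\vz_i$ lies within $q_{\max}+1/2$ of $M_0\vu$, so two selected points have inner product at least a fixed $m_0>0$. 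Because these embedded points are bounded, only $\delta_n=o_p(1)$ from \eqref{eq:gauss-row-Haar} is needed. The cost is that the target $(M_0/b)\vu$ is random, so the paper genuinely uses the independence of $\mZ_*$ from $\mA_\circ$.

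You instead place the witnesses at the extreme scale $\sqrt{\log n}\,\vu$ and reuse Lemma~\ref{lem:gauss-extreme-cloud}. You handle the random direction with a finite net, so your selection event depends only on the labels and $\mZ_*$ and holds uniformly over directions. At this step you therefore need independence of $\mZ_*$ from the labels only, not from $\mA_\circ$, despite your remark that the latter makes the construction possible. Your key bound $\vz_i^{\T}\mB_\circ^2\vz_j\ge h^2\log n\,(1-4\delta-4\delta^2)$ is right, since $\mB_\circ^2\vu=h^2\vu$ and $\|\mB_\circ^2\|_{\op}=h^2$.

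The price is that your rows have norm of order $\sqrt{\log n}$. You must therefore beat cross terms of size $O(h\sqrt{\log n})$ and a constant with the main term $\tfrac12 h^2\log n$. You also need the explicit rate $O_p(\sqrt{\log n/n})$ in \eqref{eq:gauss-row-Haar}, so that the remainder $O_p(\log n/\sqrt n)$ vanishes. Both are available, so your argument closes. Both routes in fact prove the conclusion whenever $\|\mB_\circ\|_{\op}^2\log n$ exceeds a suitable fixed constant with probability tending to one.
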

\begin{proof}
Work on the count event and use Lemma~\ref{lem:gauss-Haar}, conditional on the labels and $\mA_\circ$.
Put $b=\|\mB_\circ\|_{\op}>0$ and choose measurably a unit eigenvector $\vu$ of $\mB_\circ$ with eigenvalue $b$.
The $\vz_i$ remain independent standard Gaussians. Fix $M_0>4(q_{\max}+1)$ and call $i$ selected if
\[
 \|\vz_i-(M_0/b)\vu\|_2\le1/2.
\]
Integrating the Gaussian density gives conditional selection probability at least $c\exp(-C/b^2)$, uniformly for $0<b\le1$.
Groups 1 and 2 each contain at least $n\pi_{\min}/2$ observations, so conditional independence bounds the probability that either group has no selected point by
\[
 2\exp\{-c n\exp(-C/b^2)\}.
\]
On $\{b^2\log n\ge2C\}$, $n\exp(-C/b^2)\ge\sqrt n$.
After integration and addition of the count-event failure probability, the probability that either group has no selected point is at most
\[
 o(1)+\Pbb\{b^2\log n<2C\}+2e^{-c\sqrt n}=o(1).
\]

Put $\vw_i=\mA_\circ\vq_{Z_i}+\mB_\circ\vz_i$ and $R_0=q_{\max}+1/2$.
Every selected point satisfies $\|\vw_i-M_0\vu\|_2\le R_0$ and $\|\vw_i\|_2\le M_0+R_0$.
For two selected points,
\[
 \vw_i^{\T}\vw_j
 \ge M_0^2-2M_0R_0-R_0^2=:m_0>0,
\]
since $M_0>4R_0$.
By \eqref{eq:gauss-row-Haar},
\[
 \delta_n:=\max_i
 \|\sqrt n\,\widetilde V_{i*}^{\T}-\vw_i\|_2=o_p(1).
\]
For the selected pair from groups 1 and 2, therefore,
\[
 n\widehat P_{ij}
 \ge m_0-2(M_0+R_0)\delta_n-\delta_n^2>0
\]
with probability tending to one.
The uniform row bound applies to these selected indices without requiring independence from the approximation error.
Reordering the indices if necessary proves \eqref{eq:gauss-false-cross}.

If $\theta_n\to\infty$, every subsequence has a further one with either $I_n\to\infty$ or bounded $I_n$. In the first case, \eqref{eq:gauss-residual-cov} and \eqref{eq:gauss-comparability} give $\|\mB_\circ\|_{\op}^2\ge c v_n$ with probability tending to one, so $\|\mB_\circ\|_{\op}^2\log n\to_p\infty$. Lemma~\ref{lem:gauss-bounded-I} gives the same conclusion in the second case. The subsequence principle completes the proof.
\end{proof}

\subsection{Completion of the cutoff theorem and its explicit special case}
\begin{proof}[Proof of Theorem~\ref{thm:gaussian-cutoff}]
Lemma~\ref{lem:gauss-geometry} gives existence and uniqueness of the critical root. To prove the criteria involving $\mathfrak d(\mOmega_n(t_n))/(2\log n)$, set
\[
 \mE_n=\mOmega_n(t_n)/v_n.
\]
By \eqref{eq:gauss-comparability}, $\mE_n$ lies in the compact set $\{\mE:d_{\max}^{-2}\mI_r\preceq\mE\preceq\mI_r\}$. Homogeneity gives
\begin{equation}
 \frac{\mathfrak d(\mOmega_n(t_n))}{2\log n}
                         =\frac{\mathfrak d(\mE_n)}{2\theta_n}.
 \label{eq:gauss-compact-ratio}
\end{equation}
In particular, $\mathfrak d(\mE_n)$ is bounded above and bounded away from zero.

Whenever $\theta_n=O(1)$, $v_n\to0$ and \eqref{eq:gauss-critical-coupling} applies.
Set $\vu_i=\sqrt n\,\widetilde V_{i*}^{\T}$ and $\vw_i=\vq_{Z_i}+\mOmega_n(t_n)^{1/2}\vz_i$. Then
\[
 \delta_n:=\max_i\|\vu_i-\vw_i\|_2=o_p(1),
 \qquad
 M_n:=\max_i\|\vw_i\|_2
 \le q_{\max}+\sqrt{v_n}\max_i\|\vz_i\|_2=O_p(1).
\]
Since $\widetilde{\mV}\widetilde{\mV}^{\T}=\widehat{\mP}$, Cauchy--Schwarz gives
\begin{equation}
 \max_{i,j}|n\widehat P_{ij}-\vw_i^{\T}\vw_j|
 \le2M_n\delta_n+\delta_n^2=o_p(1).
 \label{eq:gauss-product-error}
\end{equation}

Suppose the limit inferior in \eqref{eq:gauss-compact-ratio} exceeds one. Then $\theta_n$ is bounded. From any subsequence, extract one with $\theta_n\to\theta\in[0,\infty)$ and $\mE_n\to\mE$.
If $\theta=0$, Gaussian maxima give $\max_i\|\vw_i-\vq_{Z_i}\|_2\to_p0$; the signed margins in \eqref{eq:gauss-q-geometry} and \eqref{eq:gauss-product-error} imply success.
If $\theta>0$, Lemma~\ref{lem:gauss-extreme-cloud} gives Hausdorff convergence of each surrogate point set $\{\vw_i:Z_i=k\}$ to
\[
 \mathcal C_k=\vq_k+\mE^{1/2}\overline B_r(\bm0,\sqrt{2\theta}).
\]
Indeed, $\sqrt{\log n}\,\mOmega_n(t_n)^{1/2}\to\sqrt\theta\,\mE^{1/2}$ transforms the Gaussian clouds into these sets. By \eqref{eq:gauss-compact-ratio} and continuity, $2\theta<\mathfrak d(\mE)$. Lemma~\ref{lem:gauss-geometry} gives a uniform strict sign margin, which is preserved by Hausdorff convergence and \eqref{eq:gauss-product-error}. All signs agree, and the subsequence principle gives success for the full sequence.

Suppose instead the limit superior in \eqref{eq:gauss-compact-ratio} is less than one. Then $\theta_n$ is bounded away from zero. From any subsequence, extract one with $\theta_n\to\theta\in(0,\infty]$ and $\mE_n\to\mE$ when $\theta<\infty$. For finite $\theta$, the same limiting sets $\mathcal C_k$ satisfy $2\theta>\mathfrak d(\mE)$.
Lemma~\ref{lem:gauss-geometry} gives interior points with a strict sign error. Small neighborhoods preserve the error and contain the required points with probability tending to one by Lemma~\ref{lem:gauss-extreme-cloud}. Within a group, choose disjoint neighborhoods to ensure distinct indices. Equation~\eqref{eq:gauss-product-error} preserves this strict sign error in $\widehat P$. For $\theta=\infty$, apply Lemma~\ref{lem:gauss-large-residual}. Both cases give $\Pbb(\mathcal S_n^c)\to1$, and the subsequence principle yields the full-sequence conclusion.

Finally, $t_n\ge(1+\varepsilon)t_{\mathrm{crit},n}$ eventually makes the ratio in \eqref{eq:gauss-compact-ratio} at least $1+\varepsilon$ by \eqref{eq:gauss-d-scaling}. If $t_n\le(1-\varepsilon)t_{\mathrm{crit},n}$ eventually, reciprocal scaling bounds it by $1-\varepsilon$. Since $\LK=\sigma_n^2t_n$, the cutoff follows. The count-event complements have vanishing probability, giving the conclusions under the joint law.
\end{proof}

\begin{proof}[Proof of Corollary~\ref{cor:gaussian-simplex}]
Here $\vq_k=\vm_k$, $\|\vq_k\|_2^2=r$, and $\vq_k^{\T}\vq_\ell=-1$ for $k\ne\ell$.
Also $\mOmega_n(t)=v_n(t)\mI_r$.
We compute
\begin{equation}
 \mathfrak d(\mI_r)=R_*^2
   =\begin{cases}
      1,&r=1,\\
      \{r-\sqrt{r^2-1}\}/2,&r\ge2.
     \end{cases}
 \label{eq:gauss-simplex-radius}
\end{equation}
For $r=1$, the centers are $1$ and $-1$. Radii below one preserve opposite signs, while larger intervals admit positive cross-products.

For $r\ge2$, distinct centers have angle $\alpha=\arccos(-1/r)\in(\pi/2,\pi)$. A point within radius $R<\sqrt r$ of a center makes angle at most $\arcsin(R/\sqrt r)$ with it, since the ray through the point passes within distance $R$ of the center. The angle is acute because the inner product is at least $\sqrt r(\sqrt r-R)>0$. Different balls first permit angle $\pi/2$ when
\[
 2\arcsin(R/\sqrt r)=\alpha-\pi/2.
\]
This gives \eqref{eq:gauss-simplex-radius}. Below this radius, between-group angles exceed $\pi/2$ and within-group angles are at most $2\arcsin(R/\sqrt r)<\alpha-\pi/2<\pi/2$, so all signs are correct.
For attainment, write the centers as $(s,u)$ and $(s,-u)$ in their plane, with $s=\sqrt{(r-1)/2}$ and $u=\sqrt{(r+1)/2}$. Set $a=(s+u)/2$. The points $(a,a)$ and $(a,-a)$ are orthogonal and lie at distance $R_*=(u-s)/\sqrt2$ from their centers. Increasing their first coordinates gives a positive inner product inside any balls of radius exceeding $R_*$. Thus \eqref{eq:gauss-simplex-radius} is attained first by a between-component error.

Homogeneity now gives $\mathfrak d(\mOmega_n(t))=R_*^2/v_n(t)$ and $2/R_*^2=C_K$.
Equation~\eqref{eq:gauss-critical-equation} is therefore
\[
 \frac{t^2}{t+\gamma_n}=C_K\log n,
 \qquad
 t^2-C_K(\log n)t-C_K\gamma_n\log n=0.
\]
Its positive root, multiplied by $\sigma_n^2$, gives the stated cutoff and \eqref{eq:gauss-simplex-effective}.
Expanding the root as $p/(n\log n)$ tends to zero or infinity gives the two stated asymptotic forms.
\end{proof}


\begin{thebibliography}{99}

\bibitem[Abbe et al.(2020)]{AbbeFanWangZhong2020}
\textsc{Abbe, E., Fan, J., Wang, K. and Zhong, Y.} (2020).
Entrywise eigenvector analysis of random matrices with low expected rank.
\textit{Ann. Statist.} \textbf{48}, 1452--1474.

\bibitem[Abbe et al.(2022)]{AbbeFanWang2022}
\textsc{Abbe, E., Fan, J. and Wang, K.} (2022).
An $\ell_p$ theory of PCA and spectral clustering.
\textit{Ann. Statist.} \textbf{50}, 2359--2385.

\bibitem[Azizyan et al.(2013)]{AzizyanSinghWasserman2013}
\textsc{Azizyan, M., Singh, A. and Wasserman, L.} (2013).
Minimax theory for high-dimensional Gaussian mixtures with sparse mean separation.
In \textit{Adv. Neural Inf. Process. Syst.} \textbf{26}, 2139--2147.

\bibitem[Balkema and Nolde(2010)]{BalkemaNolde2010}
\textsc{Balkema, G. and Nolde, N.} (2010).
Asymptotic independence for unimodal densities.
\textit{Adv. Appl. Probab.} \textbf{42}, 411--432.

\bibitem[Bhatia and Rosenthal(1997)]{BhatiaRosenthal1997}
\textsc{Bhatia, R. and Rosenthal, P.} (1997).
How and why to solve the operator equation $AX-XB=Y$.
\textit{Bull. Lond. Math. Soc.} \textbf{29}, 1--21.

\bibitem[Cai and Zhang(2018)]{CaiZhang2018}
\textsc{Cai, T. T. and Zhang, A.} (2018).
Rate-optimal perturbation bounds for singular subspaces with applications to high-dimensional statistics.
\textit{Ann. Statist.} \textbf{46}, 60--89.

\bibitem[Cai et al.(2022)]{CaiHanZhang2022}
\textsc{Cai, T. T., Han, R. and Zhang, A. R.} (2022).
On the non-asymptotic concentration of heteroskedastic Wishart-type matrix.
\textit{Electron. J. Probab.} \textbf{27}, no.~29, 1--40.

\bibitem[Cape et al.(2019)]{CapeTangPriebe2019}
\textsc{Cape, J., Tang, M. and Priebe, C. E.} (2019).
The two-to-infinity norm and singular subspace geometry with applications to high-dimensional statistics.
\textit{Ann. Statist.} \textbf{47}, 2405--2439.

\bibitem[Chen and Yang(2021)]{ChenYang2021}
\textsc{Chen, X. and Yang, Y.} (2021).
Cutoff for exact recovery of Gaussian mixture models.
\textit{IEEE Trans. Inform. Theory} \textbf{67}, 4223--4238.

\bibitem[Chen and Zhang(2024)]{ChenZhang2024}
\textsc{Chen, X. and Zhang, A. Y.} (2024).
Achieving optimal clustering in Gaussian mixture models with anisotropic covariance structures.
In \textit{Adv. Neural Inf. Process. Syst.} \textbf{37}, 113698--113741.

\bibitem[Ding and He(2004)]{DingHe2004}
\textsc{Ding, C. H. Q. and He, X.} (2004).
K-means clustering via principal component analysis.
In \textit{Proc. 21st Int. Conf. Mach. Learn.}, 225--232.
ACM.

\bibitem[Eldridge et al.(2018)]{EldridgeBelkinWang2018}
\textsc{Eldridge, J., Belkin, M. and Wang, Y.} (2018).
Unperturbed: spectral analysis beyond Davis--Kahan.
In \textit{Proc. Mach. Learn. Res.} \textbf{83}, 321--358.

\bibitem[Jauch et al.(2021)]{JauchHoffDunson2021}
\textsc{Jauch, M., Hoff, P. D. and Dunson, D. B.} (2021).
Monte Carlo simulation on the Stiefel manifold via polar expansion.
\textit{J. Comput. Graph. Statist.} \textbf{30}, 622--631.

\bibitem[Jiang(2006)]{Jiang2006}
\textsc{Jiang, T.} (2006).
How many entries of a typical orthogonal matrix can be approximated by independent normals?
\textit{Ann. Probab.} \textbf{34}, 1497--1529.

\bibitem[Kawamoto et al.(2025)]{KawamotoGotoTsukuda2025}
\textsc{Kawamoto, K., Goto, Y. and Tsukuda, K.} (2025).
Spectral clustering algorithm for the allometric extension model.
\textit{Stat. Papers} \textbf{66}, Art.~63.

\bibitem[Kawamoto et al.(2026)]{KawamotoGotoTsukuda2026}
\textsc{Kawamoto, K., Goto, Y. and Tsukuda, K.} (2026).
On spectral clustering under non-isotropic Gaussian mixture models.
\textit{Statist. Probab. Lett.} \textbf{239}, 110894.

\bibitem[Laurent and Massart(2000)]{LaurentMassart2000}
\textsc{Laurent, B. and Massart, P.} (2000).
Adaptive estimation of a quadratic functional by model selection.
\textit{Ann. Statist.} \textbf{28}, 1302--1338.

\bibitem[L\"offler et al.(2021)]{LofflerZhangZhou2021}
\textsc{L\"offler, M., Zhang, A. Y. and Zhou, H. H.} (2021).
Optimality of spectral clustering in the Gaussian mixture model.
\textit{Ann. Statist.} \textbf{49}, 2506--2530.

\bibitem[Ndaoud(2022)]{Ndaoud2022}
\textsc{Ndaoud, M.} (2022).
Sharp optimal recovery in the two component Gaussian mixture model.
\textit{Ann. Statist.} \textbf{50}, 2096--2126.

\bibitem[Paul(2007)]{Paul2007}
\textsc{Paul, D.} (2007).
Asymptotics of sample eigenstructure for a large dimensional spiked covariance model.
\textit{Statist. Sinica} \textbf{17}, 1617--1642.

\bibitem[Rudelson and Vershynin(2013)]{RudelsonVershynin2013}
\textsc{Rudelson, M. and Vershynin, R.} (2013).
Hanson--Wright inequality and sub-Gaussian concentration.
\textit{Electron. Commun. Probab.} \textbf{18}, no.~82, 1--9.

\bibitem[Vershynin(2018)]{Vershynin2018}
\textsc{Vershynin, R.} (2018).
\textit{High-Dimensional Probability: An Introduction with Applications in Data Science}.
Cambridge University Press.

\bibitem[von Luxburg(2007)]{vonLuxburg2007}
\textsc{von Luxburg, U.} (2007).
A tutorial on spectral clustering.
\textit{Stat. Comput.} \textbf{17}, 395--416.

\bibitem[Zha et al.(2001)]{ZhaHeDingGuSimon2001}
\textsc{Zha, H., He, X., Ding, C., Gu, M. and Simon, H. D.} (2001).
Spectral relaxation for k-means clustering.
In \textit{Adv. Neural Inf. Process. Syst.} \textbf{14}, 1057--1064.

\bibitem[Zhang and Zhou(2024)]{ZhangZhou2024}
\textsc{Zhang, A. Y. and Zhou, H. H.} (2024).
Leave-one-out singular subspace perturbation analysis for spectral clustering.
\textit{Ann. Statist.} \textbf{52}, 2004--2033.

\end{thebibliography}
\end{document}